\documentclass[12pt,reqno]{amsart}   	
\usepackage[letterpaper, margin=1in]{geometry}
\usepackage{graphicx}				
\usepackage{amssymb}
\usepackage{mathtools,amsmath}
\usepackage{amsthm}
\usepackage{amssymb}
\usepackage{mathrsfs}
\usepackage{epstopdf}
\usepackage{color}
\usepackage{enumerate}
\usepackage[pagebackref,colorlinks,citecolor=blue,linkcolor=magenta]{hyperref}
\usepackage[capitalise]{cleveref}

\usepackage[linesnumbered,ruled]{algorithm2e}
\RequirePackage{amsthm,amsmath,amsfonts,amssymb}
\usepackage[utf8]{inputenc}
\usepackage{chngcntr}
\usepackage{float}

\theoremstyle{plain}
\newtheorem{prop}{Proposition}[section]
\newtheorem{thm}[prop]{Theorem}
\newtheorem{cor}[prop]{Corollary}
\newtheorem{lem}[prop]{Lemma}

\theoremstyle{definition}
\newtheorem{dfn}[prop]{Definition}
\newtheorem{rem}[prop]{Remark}
\newtheorem{example}[prop]{Example}

\renewcommand{\iff}{\Leftrightarrow}

\newcommand{\C}{{\mathbb{C}}}
\renewcommand{\P}{{\mathbb{P}}}
\newcommand{\R}{{\mathbb{R}}}
\newcommand{\Q}{{\mathbb{Q}}}
\newcommand{\N}{{\mathbb{N}}}
\newcommand{\Z}{{\mathbb{Z}}}

\DeclareMathOperator{\im}{im}
\DeclareMathOperator{\tr}{tr}
\DeclareMathOperator{\interior}{int}
\DeclareMathOperator{\rk}{rk}
\DeclareMathOperator{\crk}{\dim\ker}
\DeclareMathOperator{\diag}{diag}
\DeclareMathOperator{\gl}{GL}
\DeclareMathOperator{\Sym}{Sym}
\DeclareMathOperator{\gram}{Gram}
\DeclareMathOperator{\aff}{aff}
\DeclareMathOperator{\pr}{pr}
\DeclareMathOperator{\spn}{span}

\DeclareMathOperator{\normal}{\mathcal{N}}
\DeclareMathOperator{\dphi}{d\phi}
\DeclareMathOperator{\disc}{disc}
\newcommand{\wS}{\widetilde{S}}
\renewcommand{\d}[1]{\,\mathrm{d}#1}
\DeclareMathOperator{\adj}{adj}

\newcommand{\Rx}{\mathbb{R}[\ul{x}]}

\newcommand{\oo}{\mathcal{O}}
\newcommand{\uu}{\mathcal{U}}

\newcommand{\du}{{\scriptscriptstyle\vee}}
\renewcommand{\setminus}{\smallsetminus}
\newcommand{\ol}{\overline}
\newcommand{\ul}{\underline}

\newcommand{\all}{\forall\,}

\renewcommand{\subset}{\subseteq}
\renewcommand{\supset}{\supseteq}
\newcommand{\bil}[2]{\langle{#1},{#2}\rangle}

\newcommand{\sy}[1]{\mathcal{S}_2 #1}

\newcommand{\sa}{semi-alge\-braic}

\newcommand{\gs}{Gram spectrahedron}
\newcommand{\gsa}{Gram spectrahedra}
\newcommand{\cc}{change of coordinates}
\newcommand{\norm}[1]{||#1||}

\newcommand{\minus}{\text{-}}

\renewcommand{\emptyset}{\varnothing}
\renewcommand{\setminus}{\smallsetminus}
\renewcommand{\epsilon}{\varepsilon}
\renewcommand{\theta}{\vartheta}

\newcommand{\todfn}[1]{\textit{#1}}

\newcommand\scalemath[2]{\scalebox{#1}{\mbox{\ensuremath{\displaystyle #2}}}}


\keywords{fiber body, faces, spectrahedra, sums of squares, convexity, normal cones, Gram spectrahedra}
\subjclass[2020]{52A20, 14P10, 14P05, 11E25}

\author[Julian Vill]{Julian Vill}
\address{
Fakult\"at f\"ur Mathematik, Otto-von-Guericke Universit\"at Magdeburg, Universit\"atsplatz 2, 39106 Magdeburg, Germany
}
\email{julian.vill@ovgu.de}

\title{Integrating Spectrahedra}

\begin{document}

\begin{abstract}
Given a linear map on the vector space of symmetric matrices, every fiber intersected with the set of positive semidefinite matrices is a spectrahedron. Using the notion of the fiber body we can build the average over all such fibers and thereby construct a compact, convex set, the fiber body. We show how to determine the dimensions of faces and normal cones to study the boundary structure of the fiber body given that we know about the boundary of the fibers.
We use this to study the fiber body of Gram spectrahedra in the case of binary sextics and ternary quartics and find a large amount of structure on the fiber body. We prove that the fiber body in the case of binary sextics has exactly one face with a full-dimensional normal cone, whereas the Gram spectrahedron of a generic positive binary sextic has four such points. The fiber body in the case of ternary quartics changes drastically.
\end{abstract}

\maketitle

\section{Introduction}

In \cite{MM21} Mathis and Meroni study the fiber body of a compact convex set. Informally, this means the following. Choose a convex, compact set $K\subset\R^{n+m}$ and consider the projection $\pi\colon \R^{n+m}\to \R^n$ to the first $n$ components. Over each point in the image we have a convex fiber in $K$. The fiber body of $K$ is the average over all fibers.
More precisely, any point $y$ in the fiber body is given by
\[
y=\int_{\pi(K)} \gamma(x) \d x
\]
where $\gamma\colon \pi(K)\to \R^m$ is a measurable section. The fiber body $\Sigma_\pi K$ is itself a compact convex set in $\R^m$. It is the continuous analogue of the Minkowski sum of convex sets. One may also think about a limit object of Minkowski sums when the number of summands tends to infinity.
In \cite{MM21} the fiber body was then studied in several special cases where the support function of the convex body $K$ is known.

Before that the notion of fiber polytopes was introduced and studied in \cite{bs1992}. These possess beautiful and well-studied combinatorics which led to numerous constructions of polytopes with prescribed combinatorial structure, for example in \cite{z1995}.

In this paper we study the fiber body of families of spectrahedra. We use a different approach as we do not in general know the support function. The goal is to understand the facial structure of the fiber body given we know the facial structure of the fibers.
We then apply the general setup to study two special families of spectrahedra, namely \gsa\ of binary sextics and of ternary quartics. These are examples in dimensions 3 and 6 respectively.

Gram spectrahedra have a close connection to sum of squares representations of polynomials and have been introduced in \cite{clr1995}. The \gs\ of a fixed homogeneous polynomial parametrizes its sum of squares representations up to orthogonal equivalence. Afterward, especially the structure of their extreme points was investigated, for example in \cite{cpsv2017} and \cite{psv2011}. In \cite{Scheiderer18} and \cite{Vill21} the complete facial structure of Gram spectrahedra was studied in the case of ternary quartics and binary forms.
As we do not have a description of the support function of Gram spectrahedra, we make extensive use of these results in order to describe the facial structure of the fiber body.

In both cases we were very surprised by the remaining facial structure on the fiber bodies. In the case of binary sextics we show that there is exactly one extreme point on the boundary of the fiber body with a 3-dimensional normal cone (\cref{thm:binary_sextics_fb}). A general positive semidefinite (psd) binary sextic has exactly four such points corresponding to length two sum of squares representations.
In the case of ternary quartics the structure of the boundary changes rather drastically. On the one hand, every face has a 1-dimensional normal cone even though the \gs\ of a general psd ternary quartic has normal cones of dimensions $1,3,6$. On the other hand, the fiber body has a 2-dimensional family of 3-dimensional faces whereas a general \gs\ has no faces of dimension larger than 2 (\cref{thm:ternary_quartics_fb}).

As a byproduct (\cref{prop:algebraic_degree}) we are able to explain the algebraic degree of the optimal solution when minimizing a linear functional over the \gs\ of a ternary quartic found in \cite{psv2011} when the solution is a rank 5 extreme point. This algebraic degree was found to be 1 and therefore the solution being rational if both the ternary quartic and the linear functional are rational. This follows from a detailed study of the normal cones of such extreme points.

We finish with an overview of the structure of the paper. In \cref{sec:intro_fb} we define the fiber body and recall some general facts we need later on. In \cref{sec:fb_spec} we introduce our setup for the rest of the paper. We study the fiber body of spectrahedra and show \cref{thm:dimension_face_fb} and \cref{thm:normal_cone_fb} which are our main tools to relate the faces and normal cones of the fibers to the one of the fiber body.
In \cref{sec:faces_of_gram_spectrahedra} we specialize the setup to \gsa\ and recall what is known about the dimensions of their faces. Moreover, we give a formula to calculate the dimensions of the normal cones.
In \cref{sec:binary_sextics} and \cref{sec:ternary_quartics} we study the fiber body in the case of \gsa\ of binary sextics and ternary quartics. The main results are \cref{thm:binary_sextics_fb} and \cref{thm:ternary_quartics_fb} which summarize the facial structure of the fiber body in each case.

\section{Introduction to fiber bodies}
\label{sec:intro_fb}

We start by introducing the fiber body construction from \cite{MM21}.
Let $V\subset\R^{n+m}$ ($m,n\in\N$) be a subspace of dimension $n$ and let $W$ be its orthogonal complement. Let $\pi\colon \R^{n+m}\to V$ be the orthogonal projection to $V$. For every point $v\in V$ we consider the fiber $\pi^{-1}(v)$ which we identify with a subset of $W$.

Let $K\subset \R^{n+m}$ be a compact, convex set. For any $v\in\pi(K)$ we write $K_v=\pi^{-1}(v)\cap K$ for the fiber of $v$ intersected with $K$.

\begin{dfn}
A map $\gamma\colon \pi(K)\to W,\, x\to \gamma(x)$ with $\gamma(x)\in K_x$ for all $x\in \pi(K)$ is called a \todfn{section (of $\pi$)}.

The \todfn{fiber body} of $K$ with respect to $\pi$ is the compact, convex set
\[
\Sigma_\pi K=\left\lbrace \int_{\pi(K)} \gamma(x) \d x\colon \gamma\colon \pi(K)\to W \text{ a measurable section}\right\rbrace
\]
where $\d x$ denotes integration with respect to the $n$-dimensional Lebesgue measure on $V$.

The \todfn{support function} of $K$ is the map
\[
h_K\colon \R^{n+m}\to \R,\quad u\mapsto h_K(u)=\min\{\bil{u}{x}\colon x\in K\}\quad (u\in\R^{n+m})
\]
where $\bil{\cdot}{\cdot}$ is the standard scalar product. We use minimum in the definition instead of maximum since it works better with dual cones we use in later sections.
\end{dfn}

\begin{prop}[{\cite[Proposition 2.7.]{MM21}}]
For any $w\in W$ the support function of the fiber body satisfies
\[
h_{\Sigma_\pi K}(w)=\int_{\pi(K)} h_{K_x}(w) \d x.
\]
\end{prop}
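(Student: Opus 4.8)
The plan is to compute $h_{\Sigma_\pi K}(w)$ directly from the definition of the fiber body and interchange the supremum over sections with the integral over $\pi(K)$. Fix $w\in W$. By definition
\[
h_{\Sigma_\pi K}(w)=\min\left\{\bil{w}{y}\colon y\in\Sigma_\pi K\right\}=\min_{\gamma}\int_{\pi(K)}\bil{w}{\gamma(x)}\d x,
\]
where $\gamma$ ranges over all measurable sections. For each fixed $x\in\pi(K)$ and each section $\gamma$ we have $\gamma(x)\in K_x$, hence $\bil{w}{\gamma(x)}\ge h_{K_x}(w)$; integrating gives the inequality $h_{\Sigma_\pi K}(w)\ge\int_{\pi(K)}h_{K_x}(w)\d x$. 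For the reverse inequality I would construct, for every $\epsilon>0$, a measurable section $\gamma$ with $\bil{w}{\gamma(x)}\le h_{K_x}(w)+\epsilon$ for all $x$; integrating over the (finite-measure, since $K$ is compact) set $\pi(K)$ then yields $h_{\Sigma_\pi K}(w)\le\int_{\pi(K)}h_{K_x}(w)\d x+\epsilon\cdot\mathrm{vol}(\pi(K))$, and letting $\epsilon\to0$ finishes the argument. Alternatively, one can select $\gamma(x)$ to be an exact minimizer of $u\mapsto\bil{w}{u}$ on the compact convex set $K_x$ — for instance the lexicographically smallest such minimizer, or the one closest to a fixed point — which makes $\bil{w}{\gamma(x)}=h_{K_x}(w)$ pointwise.

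The one genuine technical point, and the step I expect to be the main obstacle, is \textbf{measurability of the selection} $\gamma$ and of the integrand $x\mapsto h_{K_x}(w)$. The multifunction $x\mapsto K_x$ has compact convex values and its graph is (relatively) closed in $\pi(K)\times W$ because $K$ is closed; hence it is a measurable (indeed upper semicontinuous) correspondence, and a standard measurable selection theorem (Kuratowski–Ryll-Nardzewski, or Aumann's theorem for the $\epsilon$-approximate version) provides a measurable section $\gamma$ achieving, or approximating, the minimum of $\bil{w}{\cdot}$ on each fiber. The function $x\mapsto h_{K_x}(w)=\min_{u\in K_x}\bil{w}{u}$ is then measurable, and in fact one checks it is bounded (again by compactness of $K$), so all the integrals above are finite and the manipulations are justified. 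This is presumably exactly the content of the cited \cite[Proposition~2.7.]{MM21}, so in the paper I would simply invoke it; if a self-contained argument were wanted, the above sketch is the route. Note also that the identity already appears as a quoted result in the excerpt, so strictly speaking nothing needs to be proved here beyond pointing to \cite{MM21}.
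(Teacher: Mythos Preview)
The paper does not prove this statement; it is simply quoted from \cite[Proposition~2.7.]{MM21} without argument. You already recognized this in your final paragraph, and your sketch (pointwise inequality plus a measurable-selection argument for the reverse direction) is the standard route to the result, so there is nothing to compare.
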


For any $w\in W$ we denote by $K^w$ the face of $K$ in direction $w$ which is 
\[
K^w=\{x\in K\colon \bil{x}{w}\le \bil{y}{w}\, (\all y\in K)\}.
\]
This is what is usually called an exposed face. Since we are concerned with spectrahedra, most of the faces we are interested in are exposed. In general, a convex subset $F\subset K$ is a face of $K$ if for any two points $x,y\in K$ with $x+y\in F$, both points $x,y$ are contained in $F$.

The support function of faces behaves just as well.

\begin{lem}[{\cite[Lemma 2.10.]{MM21}}]
\label{lem:support_function_faces_fb}
Let $u,w\in W$ then
\[
h_{(\Sigma_\pi K)^u}(w)=\int_{\pi(K)} h_{(K_x)^u}(w) \d x.
\]
\end{lem}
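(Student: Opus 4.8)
The plan is to reduce the statement to the formula for the support function of the fiber body (Proposition~2.7) together with a pointwise identity expressing the support function of a face as a one–sided directional derivative of the support function. Concretely, for any nonempty compact convex set $C\subset\R^{n+m}$ and any $u,w\in\R^{n+m}$ one has
\[
h_{C^u}(w)=\lim_{t\to 0^+}\frac{h_C(u+tw)-h_C(u)}{t}.
\]
I would prove this first. The inequality ``$\le$'' follows by substituting a minimizer $x_0\in C^u$ into $h_C(u+tw)\le\bil{x_0}{u+tw}=h_C(u)+t\bil{x_0}{w}$. For ``$\ge$'', take minimizers $x_t$ of $\bil{\cdot}{u+tw}$ on $C$, extract a subsequence $x_{t_k}\to x^*$ along which $\tfrac{h_C(u+t_kw)-h_C(u)}{t_k}$ tends to $\liminf_{t\to0^+}\tfrac{h_C(u+tw)-h_C(u)}{t}$, observe that $\bil{x^*}{u}=h_C(u)$ by continuity of the support function (so $x^*\in C^u$), and note that $\tfrac{h_C(u+t_kw)-h_C(u)}{t_k}\ge\bil{x_{t_k}}{w}\to\bil{x^*}{w}\ge h_{C^u}(w)$ since $\bil{x_{t_k}}{u}\ge h_C(u)$.

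Next I would apply this identity to $C=\Sigma_\pi K$ and, for each fixed $x\in\pi(K)$, to $C=K_x$. By Proposition~2.7, $h_{\Sigma_\pi K}(u+tw)=\int_{\pi(K)}h_{K_x}(u+tw)\d x$ for every $t\in\R$, hence
\[
h_{(\Sigma_\pi K)^u}(w)=\lim_{t\to 0^+}\int_{\pi(K)}\frac{h_{K_x}(u+tw)-h_{K_x}(u)}{t}\d x .
\]
The main work is to interchange the limit and the integral. For this I would use that $t\mapsto h_{K_x}(u+tw)$ is concave, being a minimum of affine functions of $t$; consequently the difference quotient is monotone non-decreasing as $t\downarrow 0$ and, for $t\in(0,1]$, bounded below by $x\mapsto h_{K_x}(u+w)-h_{K_x}(u)$, which is integrable on $\pi(K)$ because $\int_{\pi(K)}h_{K_x}(u+w)\d x=h_{\Sigma_\pi K}(u+w)$ and $\int_{\pi(K)}h_{K_x}(u)\d x=h_{\Sigma_\pi K}(u)$ are finite. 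The monotone convergence theorem then gives
\[
h_{(\Sigma_\pi K)^u}(w)=\int_{\pi(K)}\Big(\lim_{t\to 0^+}\frac{h_{K_x}(u+tw)-h_{K_x}(u)}{t}\Big)\d x=\int_{\pi(K)}h_{(K_x)^u}(w)\d x ,
\]
where the last equality is the pointwise identity of the first step applied to each $K_x$.

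The remaining points are the measurability questions, and they are mild: $x\mapsto h_{K_x}(v)$ is measurable for every fixed $v$ (this is already part of the standing setup that makes Proposition~2.7 meaningful), and $x\mapsto h_{(K_x)^u}(w)$ is then measurable as a pointwise limit of such functions. I expect the interchange of limit and integral to be the only genuine obstacle, and the concavity together with the integrable lower bound disposes of it. An alternative route avoids directional derivatives entirely: first show $(\Sigma_\pi K)^u=\bigl\{\int_{\pi(K)}\gamma\,\d x:\gamma\text{ a measurable section with }\gamma(x)\in(K_x)^u\text{ for a.e.\ }x\bigr\}$ — because a section integrates to a point of the face of $\Sigma_\pi K$ in direction $u$ precisely when $\bil{\gamma(x)}{u}=h_{K_x}(u)$ almost everywhere, by nonnegativity of $\bil{\gamma(x)}{u}-h_{K_x}(u)$ and Proposition~2.7 applied to $u$ — and then minimize $\bil{\cdot}{w}$ fibrewise. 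That version, however, requires a measurable selection theorem for the set-valued map $x\mapsto((K_x)^u)^w$, so I would present the directional-derivative argument as the main one.
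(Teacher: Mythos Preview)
The paper does not prove this lemma; it is quoted verbatim from \cite[Lemma~2.10]{MM21}, so there is no in-paper argument to compare against. Your proof is correct and self-contained: the identity $h_{C^u}(w)=\lim_{t\to 0^+}\frac{h_C(u+tw)-h_C(u)}{t}$ is the standard description of the support function of a face as a one-sided directional derivative, and the interchange of limit and integral is properly justified by concavity of $t\mapsto h_{K_x}(u+tw)$ together with the integrable lower bound at $t=1$. The alternative route you outline---identifying $(\Sigma_\pi K)^u$ with integrals of sections landing in $(K_x)^u$ and then minimizing fibrewise---is essentially the content of \cref{lem:faces_fb} specialized to a single direction, so it is not an independent approach but rather the one the cited source likely takes; your directional-derivative argument has the advantage of sidestepping the measurable-selection issue.
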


As one might expect, extreme points are particularly well-behaved.

\begin{prop}[{\cite[Lemma 2.9., Proposition 2.14.]{MM21}}]
\label{prop:fb_extreme_points}
Let $w\in W$. Then $(\Sigma_\pi K)^w$ is an extreme point if and only if $(K_x)^w$ is an extreme point for almost all $x\in\pi(K)$.

Moreover, if $(\Sigma_\pi K)^w=\{z\}$ is an extreme point and $(K_x)^w=\{z_x\}$ for almost all $x\in\pi(K)$, then $z=\int_{\pi(K)} \gamma(x) \d x$ with $\gamma(x)=z_x$ for almost all $x\in\pi(K)$.
Especially, the section $\gamma$ is unique almost everywhere.
\end{prop}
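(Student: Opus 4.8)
The plan is to deduce both claims from \cref{lem:support_function_faces_fb} together with the elementary observation that a nonempty compact convex set $C\subset W$ is a single point if and only if $h_C(u)+h_C(-u)=0$ for all $u\in W$; indeed $h_C(u)+h_C(-u)=\min_{x\in C}\bil{u}{x}-\max_{x\in C}\bil{u}{x}\le 0$, with equality precisely when $\bil{u}{\cdot}$ is constant on $C$. (Here and below, a face being an extreme point means it is a single point.) Recall also that $\pi(K)$ is compact, hence of finite Lebesgue measure, so all integrals below are finite, and that $K_x$ is nonempty and compact for every $x\in\pi(K)$, so $(K_x)^w\neq\emptyset$.

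For the equivalence, apply \cref{lem:support_function_faces_fb} (with face direction $w$) at $u$ and at $-u$ and add the resulting identities:
\[
h_{(\Sigma_\pi K)^w}(u)+h_{(\Sigma_\pi K)^w}(-u)=\int_{\pi(K)}\Big(h_{(K_x)^w}(u)+h_{(K_x)^w}(-u)\Big)\d x .
\]
By the observation above the integrand is $\le 0$ for every $x$. If $(K_x)^w$ is a point for almost every $x$, the integrand vanishes a.e., hence the left-hand side is $0$ for every $u\in W$ and $(\Sigma_\pi K)^w$ is a point. Conversely, if $(\Sigma_\pi K)^w$ is a point, then for each fixed $u$ the nonpositive integrand vanishes for almost every $x$; taking a countable dense set $D\subset W$, outside a null set the integrand vanishes simultaneously for all $u\in D$, and by continuity of support functions then for all $u\in W$, so $(K_x)^w$ is a point for almost every $x$.

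For the second statement put $(K_x)^w=\{z_x\}$ for a.e.\ $x$. Since $z\in(\Sigma_\pi K)^w\subset\Sigma_\pi K$, by definition of the fiber body there is a measurable section $\gamma$ with $z=\int_{\pi(K)}\gamma(x)\d x$. For any section $\gamma'$ and any $x$ with $(K_x)^w=\{z_x\}$ we have $\bil{w}{\gamma'(x)}\ge h_{K_x}(w)$, with equality if and only if $\gamma'(x)\in (K_x)^w$, i.e.\ $\gamma'(x)=z_x$. Moreover $(\Sigma_\pi K)^w=\{z\}$ is exactly the set of minimizers of $\bil{w}{\cdot}$ over $\Sigma_\pi K$, so $\bil{w}{z}=h_{\Sigma_\pi K}(w)=\int_{\pi(K)}h_{K_x}(w)\d x$ (the case $u=0$ of \cref{lem:support_function_faces_fb}, or \cite[Prop.~2.7]{MM21}). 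Hence for any measurable section $\gamma'$ with $\int_{\pi(K)}\gamma'(x)\d x=z$ we get
\[
0=\bil{w}{z}-\int_{\pi(K)}h_{K_x}(w)\d x=\int_{\pi(K)}\Big(\bil{w}{\gamma'(x)}-h_{K_x}(w)\Big)\d x ,
\]
an integral of a nonnegative function, which forces $\gamma'(x)=z_x$ for almost every $x$. Applied to the section $\gamma$ this shows that $x\mapsto z_x$ coincides a.e.\ with a measurable section and $z=\int_{\pi(K)} z_x\d x$; applied to an arbitrary such $\gamma'$ it gives the a.e.\ uniqueness.

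The analytic inequalities are immediate from \cref{lem:support_function_faces_fb} and the sign of the integrands, so the only point that needs a little care is the measurability bookkeeping in the equivalence — the passage from ``for each fixed $u$ the integrand vanishes a.e.'' to ``a.e.\ the integrand vanishes for all $u$'' — which I expect to be the main (though mild) obstacle and is handled by the separability of $W$ and continuity of support functions.
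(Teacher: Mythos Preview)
The paper does not give its own proof of this proposition; it simply quotes the result from \cite[Lemma 2.9, Proposition 2.14]{MM21}. Your argument is correct and self-contained, and in fact its flavor matches the support-function techniques the paper uses elsewhere (notably in the proof of \cref{thm:dimension_face_fb}, which also exploits that an integral of a sign-definite integrand vanishing forces the integrand to vanish a.e.). One tiny remark: the parenthetical ``the case $u=0$ of \cref{lem:support_function_faces_fb}'' is a bit confusing given your variable conventions; the identity $h_{\Sigma_\pi K}(w)=\int_{\pi(K)} h_{K_x}(w)\d x$ is precisely the proposition displayed right before \cref{lem:support_function_faces_fb}, so it is cleaner to cite that directly.
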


\begin{example}
Consider the unit disk $K\subset \R^2$ projected onto the $x$-axis. The image of $K$ is the unit interval and the fiber over almost every points is an interval. The fiber body is then also an interval whose boundary points are $\int_{\pi(K)} \gamma(x) \d x$ with $\gamma(x)$ being the upper boundary point of every fiber, i.e. the face in direction $-y$, respectively the face in direction $y$.
\end{example}

More generally, the following Lemma holds.

\begin{lem}[{\cite[Lemma 2.9.]{MM21}}]
\label{lem:faces_fb}
Let $\uu=\{w_1,\dots,w_k\}$ be an ordered family of linearly independent vectors in $W$. Then 
\[
(\Sigma_\pi K)^\uu=\left\lbrace \int_{\pi(K)} \gamma(x) \d x\colon \gamma \text{ section with } \gamma(x)\in (K_x)^\uu \text{ for all } x\in\pi(K) \right\rbrace
\]
where $(\Sigma_\pi K)^\uu$ and $(K_x)^\uu$ denote iterated faces, i.e. $(K_x)^\uu=(\dots((K_x)^{w_1})^{w_2}\dots)^{w_k}$.
\end{lem}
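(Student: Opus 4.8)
The plan is to establish the identity first in the case $k=1$ and then to obtain the general case by iterating it. The key preliminary remark is that the fiber-body construction, together with the support-function formula \cite[Proposition~2.7]{MM21}, makes sense and stays true for an arbitrary measurable family $C=(C_x)_{x\in\pi(K)}$ of nonempty compact convex subsets of $W$ (all contained in a fixed compact subset of $W$), not only for the genuine fibers of a convex body: setting
\[
\Sigma C=\left\lbrace \int_{\pi(K)}\gamma(x)\d x\colon \gamma\ \text{a measurable section with}\ \gamma(x)\in C_x\ \text{for all}\ x\in\pi(K)\right\rbrace ,
\]
the set $\Sigma C$ is again compact and convex (the sections form a convex set since each $C_x$ is convex), and $h_{\Sigma C}(w)=\int_{\pi(K)}h_{C_x}(w)\d x$ holds for all $w\in W$ by the same pointwise-minimization argument: one minimizes $\bil{\gamma(x)}{w}$ over $C_x$ for each $x$ separately and picks the minimizer measurably. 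In this language \cite[Proposition~2.7]{MM21} and \cref{lem:support_function_faces_fb} are the instances $C_x=K_x$ and $C_x=(K_x)^u$, and $\Sigma_\pi K=\Sigma C$ for $C_x=K_x$. Finally, passing from $C$ to the family of faces $C^w=\bigl(x\mapsto (C_x)^w\bigr)$ yields again a measurable family of nonempty compact convex sets over the same base $\pi(K)$, since a face of a nonempty compact set is nonempty.

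First I would prove the case $k=1$, namely $(\Sigma C)^w=\Sigma(C^w)$ for every $w\in W$. For ``$\supseteq$'': if $\gamma$ is a section of $C^w$, then $\bil{\gamma(x)}{w}=h_{C_x}(w)$ for all $x$, hence $\bil{\int\gamma}{w}=\int h_{C_x}(w)\d x=h_{\Sigma C}(w)$, so $\int\gamma$ minimizes $\bil{\cdot}{w}$ over $\Sigma C$, which is precisely the condition $\int\gamma\in(\Sigma C)^w$. For ``$\subseteq$'': take $z\in(\Sigma C)^w$ and write $z=\int\gamma$ for some section $\gamma$ of $C$. Then $\int\bil{\gamma(x)}{w}\d x=\bil{z}{w}=h_{\Sigma C}(w)=\int h_{C_x}(w)\d x$, while $\bil{\gamma(x)}{w}\ge h_{C_x}(w)$ holds pointwise; an equality of integrals with a pointwise inequality between the integrands forces $\bil{\gamma(x)}{w}=h_{C_x}(w)$, i.e. $\gamma(x)\in(C_x)^w$, for almost every $x$. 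Redefining $\gamma$ on the exceptional null set by an arbitrary measurable section of the family $C^w$ produces a section of $C^w$ with the same integral $z$, so $z\in\Sigma(C^w)$.

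To conclude, write $(\Sigma C)^\uu=\bigl(\cdots((\Sigma C)^{w_1})^{w_2}\cdots\bigr)^{w_k}$. Since $C^{w_1}$, then $(C^{w_1})^{w_2}$, and so on are again measurable families of nonempty compact convex subsets of $W$ over $\pi(K)$, applying the case $k=1$ a total of $k$ times gives $(\Sigma C)^\uu=\Sigma(C^\uu)$ with $(C^\uu)_x=(C_x)^\uu$; specializing to $C_x=K_x$ is exactly the assertion of the lemma (linear independence of $\uu$ is not actually used). The one genuinely delicate point — the ``hard part'' — is the measurability bookkeeping behind the two arguments above: one must know that $x\mapsto(K_x)^w$, and then each of its iterated faces, is a measurable compact-valued map admitting a measurable section, so that the support-function identity and the correction on a null set are legitimate. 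This is standard: the graph $\{(x,y):y\in C_x,\ \bil{y}{w}=h_{C_x}(w)\}$ is measurable as soon as $x\mapsto h_{C_x}(w)$ is, after which one applies a measurable selection theorem; and in the concrete families of spectrahedra studied later in the paper the fibers and all their faces vary semialgebraically, so measurability is automatic.
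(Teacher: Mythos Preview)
The paper does not give its own proof of this lemma; it is simply quoted from \cite[Lemma~2.9]{MM21}. Your argument is correct and is essentially the standard one: the $k=1$ case follows from the support-function identity together with the ``pointwise inequality plus equal integrals forces a.e.\ equality'' trick, and the general case by iteration. Your observation that linear independence of $w_1,\dots,w_k$ is never used is also right, and your flagging of measurable selection as the only nontrivial technical point is accurate.
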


\section{Fiber bodies of spectrahedra}
\label{sec:fb_spec}

As our focus is on fiber bodies of spectrahedra we now specialize to this setup. We recall some general facts about the cone of psd matrices and show \cref{thm:dimension_face_fb} and \cref{thm:normal_cone_fb} which describe the faces and their normal cones of the fiber body. These we use in the following sections as our main tools to study the fiber body.

Let $V$ be a finite-dimensional $\R$-vector space. We write $\Sym^N$ for the vector space of real symmetric matrices of size $N\times N$ and $\Sym^N_+$ for the subset of positive semidefinite matrices. Consider a linear map $\pi\colon\Sym^N\to V$. We assume that $\pi$ is surjective as otherwise the fiber body is $\{0\}$. For any $v\in V$ the fiber $\pi^{-1}(v)\cap\Sym^N_+$ is a spectrahedron.

In order for the fiber body construction to work properly we require $\ker(\pi)\cap\Sym_+^N=0$. This will ensure compact fibers.
We denote by $W$ the kernel of the map $\pi$. Choose any linear complement $L$ of $W$ in $\Sym^N$. Then $\pi$ defines an isomorphism between $V$ and $L$ and by slight abuse of notation we can write $\Sym^N=V\oplus W$.

Let $||\cdot ||$ be any norm on $V$ and let $K\subset\Sym^N_+$ be the preimage of the unit ball under $\pi$ intersected with $\Sym^N_+$. Since $\pi$ is linear, the set $K$ is then a compact, convex, semi-algebraic set. 
Indeed, assume $K$ was not bounded, then the recession cone of $K$ is non-trivial and there exists $0\neq G\in\Sym^N$ such that $K+cG\subset K$ for every $c\in\R_{>0}$. Since $\pi(K)+c\pi(G)=\pi(K+cG)\subset \pi(K)$ is compact we see that $\pi(G)=0$. Hence, $G$ cannot be psd and has a negative eigenvalue. Therefore, for any fixed $x\in K$ and $0\ll c$ the element $x+cG$ is not psd and thus not contained in $K$, a contradiction.
In the same way, every fiber is compact.

We can therefore construct the fiber body $\Sigma_\pi K$ by considering every spectrahedron $K_v$ ($v\in V$) to be a compact subset of $W$. Note that for any $v\in V$ with $\norm{v}\le 1$, $\pi^{-1}(v)\cap\Sym^N_+=\pi^{-1}(v)\cap K$ by choice of $K$ and therefore we did not change these spectrahedra by intersecting with $K$.
Furthermore, even though this is necessary for the fiber body construction, we can mostly ignore the condition $\norm{v}\le 1$ since for any $0\neq v\in V$ we have $\pi^{-1}(v)\cap \Sym^N_+=\norm{v} (\pi^{-1}\left(\frac{v}{\norm{v}}\right)\cap \Sym^N_+)=\norm{v} (\pi^{-1}\left(\frac{v}{\norm{v}}\right)\cap K)$. I.e. all fibers are just scaled version of fibers of elements in with norm at most 1. Especially, they have the same facial structure.

\begin{example}
The easiest but rather uninteresting example is if $\dim V=1$. Consider for example the trace map on $\Sym^N$. For any $\lambda\in\R_{>0}$ the fiber $\tr^{-1}(\lambda)\cap\Sym_+^N$ is a compact cone basis of $\Sym_+^N$ and therefore has basically the same facial structure as $\Sym_+^N$ itself. It is then not hard to see that the same also holds for the fiber body. With the same argument the same holds for any linear functional instead of the trace as long as the fibers are compact.
\end{example}

\begin{lem}
\label{lem:correspondence_functionals}
Let $x=v+w\in V\oplus W$, $v\in V, w\in W$. For every $0\neq u\in V$ the face of $S:=\pi^{-1}(u)\cap\Sym_+^N$ in direction $x$ is the same as the face in direction $w$. In the first case $S$ should be seen as a subset of $\Sym^N$, in the second case either as a subset of $\Sym^N$ or of $W$.
\end{lem}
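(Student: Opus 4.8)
The plan is to show directly that the two faces are cut out of $S$ by exactly the same system of linear inequalities, exploiting that $S$ lies in a single coset of $W$.

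First I would fix $0\neq u\in V$ and note that $S=\pi^{-1}(u)\cap\Sym_+^N$ is contained in the affine subspace $\pi^{-1}(u)$, which is a coset of $W=\ker\pi$; hence $s-t\in W$ for all $s,t\in S$. For such $s,t$ I would then compute
\[
\bil{s-t}{x}=\bil{s-t}{v}+\bil{s-t}{w}=\bil{s-t}{w},
\]
where the term $\bil{s-t}{v}$ vanishes because $s-t\in W$ while $v\in V$, and $V$ is orthogonal to $W$ with respect to the trace inner product that defines the support function and the faces. Consequently, for any $s\in S$ one has $\bil{s}{x}\le\bil{t}{x}$ for all $t\in S$ if and only if $\bil{s}{w}\le\bil{t}{w}$ for all $t\in S$, which is precisely the assertion that the two minimizer sets agree:
\[
S^x=\{s\in S\colon \bil{s}{x}\le\bil{t}{x}\ \all t\in S\}=\{s\in S\colon \bil{s}{w}\le\bil{t}{w}\ \all t\in S\}=S^w,
\]
with $S$ regarded as a subset of $\Sym^N$ on both sides.

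To finish I would check that replacing this copy of $S$ by its image in $W$ does not affect the face in direction $w$. Choosing a base point $s_0\in\pi^{-1}(u)$, the identification of $\pi^{-1}(u)$ with $W$ is $s\mapsto s-s_0$, and $\bil{s-s_0}{w}-\bil{t-s_0}{w}=\bil{s-t}{w}$ for all $s,t\in S$, so the defining inequalities of the face in direction $w$ are invariant under this translation. Hence the face of $S-s_0\subseteq W$ in direction $w$ corresponds under $s\mapsto s-s_0$ exactly to the set $S^w$ computed above, which settles the second part of the statement.

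The whole argument is elementary; the only point that genuinely requires attention is the orthogonality of $V$ and $W$ in $\Sym^N$, since that is exactly what makes the $V$-component $v$ of the direction $x$ invisible to the functional restricted to $S$ (with a non-orthogonal complement the statement would fail, so this cannot be dropped). Everything else is bookkeeping about restricting an affine functional to the affine hull of $S$.
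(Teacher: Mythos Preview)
Your proof is correct and follows essentially the same approach as the paper: both exploit that $S$ lies in a single coset of $W$ and use the orthogonality of $V$ and $W$ to conclude that the $V$-component $v$ of the direction contributes only a constant on $S$. The paper fixes the particular base point $G\in V$ with $\pi(G)=u$ and expands $\bil{x}{G+H}=\bil{v}{G}+\bil{w}{H}$, while you phrase the same computation via differences $s-t\in W$; your version also spells out the translation to $W$ a bit more explicitly, but the content is the same.
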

\begin{proof}
Write $S=G+\wS$ with $G\in V$, $\pi(G)=u$ and $\wS\subset W$. Then for any $H\in\wS$ we have
\[
\bil{x}{G+H}=\bil{v+w}{G+H}=\bil{v}{G}+\bil{w}{H}
\]
since $\bil{v}{H}=\bil{w}{G}=0$. As $G$ only depends on $u$ we see that $\bil{x}{G+H}$ is minimal if and only if $\bil{w}{H}$ is minimal.
\end{proof}

The following is well-known about the facial structure of the psd cone itself.

\begin{prop}
\label{prop:dimension_face_spec}
Let $F\subset\Sym_+^N$ be a face with corresponding subspace $U$. Then for any $u\in \pi(F)$, and any $x\in \Sym^N$ with $F=(\Sym^N_+)^x$, the face of $S_u$ in direction $x$ is given by
\[
\{G\in S_u\colon \im(G)\subset U\}\neq\emptyset.
\]
For any $u\in \pi(\interior F)$ this face has dimension $\binom{\dim U+1}{2}-\dim \pi(F)$.
\end{prop}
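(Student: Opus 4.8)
The plan is to first identify the face $(S_u)^x$ with a fiber of $F$ itself, and then to read off its dimension from the general behaviour of linear images of convex sets on relative interiors.

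\emph{Step 1 (the set equality and nonemptiness).} I would start from the standard description of the faces of $\Sym^N_+$: every face has the form $F_U=\{G\in\Sym^N_+\colon \im G\subseteq U\}$ for a subspace $U\subseteq\R^N$, and $U$ is recovered from $F$ as the span of the images of its elements, so "the corresponding subspace'' is unambiguous. If $F=(\Sym^N_+)^x$ is nonempty then $0\in F$, hence $h_{\Sym^N_+}(x)=0$, so $\bil{x}{G}\ge 0$ for all $G\succeq 0$ (i.e. $x\succeq 0$) and, by complementary slackness for pairs of psd matrices, $\bil{x}{G}=0$ exactly when $\im G\subseteq\ker x$. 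Comparing with $F=F_U$ gives $U=\ker x$. Now for $u\in\pi(F)$ pick $G_0\in F$ with $\pi(G_0)=u$; then $G_0\in S_u$ and $\bil{x}{G_0}=0$, so the minimum of $\bil{x}{\cdot}$ over $S_u\subseteq\Sym^N_+$ equals $0$ and is attained precisely on $\{G\in S_u\colon \im G\subseteq U\}$. This set is therefore $(S_u)^x$, and it is nonempty because it contains $G_0$.

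\emph{Step 2 (reformulation as a fiber of $F$).} By Step 1, $(S_u)^x=\{G\in\Sym^N_+\colon \pi(G)=u,\ \im G\subseteq U\}=\pi^{-1}(u)\cap F$, the fiber of $u$ inside $F$. Writing symmetric matrices with $\im\subseteq U$ in block form with respect to $\R^N=U\oplus U^\perp$ shows that the linear span $L_U:=\{G\in\Sym^N\colon \im G\subseteq U\}$ is isomorphic to $\Sym^{\dim U}$, so $\dim L_U=\binom{\dim U+1}{2}$, and $F=F_U\cong\Sym^{\dim U}_+$ is full-dimensional in $L_U$; hence $\dim F=\binom{\dim U+1}{2}$ and $\aff F=L_U$.

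\emph{Step 3 (dimension of the fiber).} It remains to show that for $u\in\pi(\interior F)$, where $\interior F$ is the relative interior, one has $\dim\bigl(\pi^{-1}(u)\cap F\bigr)=\dim F-\dim\pi(F)$. The upper bound is immediate: any fiber of $\pi|_{\aff F}$ is an affine subspace of dimension $\dim\aff F-\dim\pi(\aff F)=\dim F-\dim\pi(F)$, and $\pi^{-1}(u)\cap F$ lies inside it. For the lower bound I would use $\pi(\interior F)=\interior\pi(F)$ to choose $G_0\in\interior F$ with $\pi(G_0)=u$; a full neighbourhood of $G_0$ in $\aff F$ lies in $F$, and intersecting it with the affine fiber through $G_0$ gives a full-dimensional subset of that fiber contained in $\pi^{-1}(u)\cap F$. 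Combining this with Step 2 and $\dim F=\binom{\dim U+1}{2}$ yields $\dim(S_u)^x=\binom{\dim U+1}{2}-\dim\pi(F)$.

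\emph{Main obstacle.} Steps 1 and 2 are essentially bookkeeping with the known facial structure of the psd cone and with complementary slackness. The real care is needed in Step 3: one must use that $\interior F$ denotes the \emph{relative} interior, that $\pi$ maps relative interiors onto relative interiors, and that the fiber attains the generic (maximal) dimension precisely over relative-interior points of $\pi(F)$ — over boundary points of $\pi(F)$ the dimension can drop, which is why the formula is only claimed there.
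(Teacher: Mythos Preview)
Your argument is correct. The paper does not actually give a proof of this proposition; it is introduced with the sentence ``The following is well-known about the facial structure of the psd cone itself'' and stated without proof. Your write-up supplies exactly the standard justification one would expect: identify $F=F_U$ via complementary slackness (so $U=\ker x$), recognise $(S_u)^x$ as the fiber $\pi^{-1}(u)\cap F$, and compute the fiber dimension over relative-interior points using $\dim F=\binom{\dim U+1}{2}$ and the fact that linear images preserve relative interiors.

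One small remark on Step~3: you invoke $\pi(\interior F)=\interior\pi(F)$ to ``choose $G_0\in\interior F$ with $\pi(G_0)=u$'', but since the hypothesis is already $u\in\pi(\interior F)$ this is immediate and the equality of relative interiors is not needed here (it would be if the hypothesis were phrased as $u\in\interior\pi(F)$, which is equivalent but not literally what is stated). This does not affect the validity of the proof.
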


The following is one main reason why considering spectrahedra instead of general convex sets makes the study of their fiber bodies easier.

\begin{rem}
\label{rem:completion}
Given $0\neq w\in W$. What is $S_u^w$? One answer is as follows: Assume we find $v\in V$ such that $v+w$ is psd, and consider $\im(v+w)$. Then we know the dimension of $S_u^w$ for any $u\in\pi(\interior(F_{\im(v+w)}))$ by \cref{prop:dimension_face_spec}, where $F_{\im(v+w)}=\{G\in\Sym^N_+\colon \im(G)\subset\im (v+w)\}$.

Assume for example we can find $v\in V$ such that $v+w$ is psd of rank 1 with image $U$. Then $S_u^w$ is a face of corank 1 for any $u\in\pi(\interior(F_U))$.

We use this approach throughout the paper and try to complete elements in $W$ to low rank psd matrices $v+w$.
\end{rem}

Note that the next theorem does not depend on the fact that the fiber body is constructed from spectrahedra. The same proof also works for fiber bodies in general.
For a convex set $C\subset\R^m$ we denote by $\aff(C)$ the affine hull of $C$, i.e. the smallest affine subspace containing $C$.

\begin{thm}
\label{thm:dimension_face_fb}
Let $w\in W$ and let $S_u^w$ be the face in direction $w$ on $S_u$. We denote by $F$ the face $(\Sigma_\pi K)^w$ on $\Sigma_\pi K$.
Then the subspace $A$ corresponding to $\aff(F)$ is the vector space spanned by all subspaces $A_u$ corresponding to the affine hulls of $S_u^w$ in such a way that the dimension is minimal after removing a set of measure zero from $V$, i.e.
\[
A=\bigcap_{\oo'} \spn(A_u\colon u\in\oo')
\]
where the intersection is over all subsets $\oo'\subset V$ such that its complement $(\oo')^c$ has measure zero.
Moreover, there exists $\oo\subset V$ such that $\oo^c$ has measure zero and
\[
A=\spn(A_u\colon u\in\oo).
\]
\end{thm}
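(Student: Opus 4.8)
The plan is to pin down the subspace $A$ through its orthogonal complement in $W$, using only the support-function identity \cref{lem:support_function_faces_fb} (so the argument never sees that $K$ is built from spectrahedra). The basic observation is: for a nonempty compact convex set $D$ in a Euclidean space, a vector $w'$ lies in the orthogonal complement of the subspace corresponding to $\aff(D)$ exactly when $\bil{w'}{\cdot}$ is constant on $D$, that is, when $h_D(w')+h_D(-w')=0$; and since $h$ is the $\min$-support function we always have $h_D(w')+h_D(-w')=\min_{y\in D}\bil{w'}{y}-\max_{y\in D}\bil{w'}{y}\le 0$. I will use this for $D=F$ and, for each $x\in\pi(K)$, for $D=S_x^w$; here $S_x^w$ is the face in direction $w$ of the nonempty compact convex fiber $K_x$, hence itself nonempty and compact, so all quantities below are finite, and $w'\perp A_x$ iff $h_{S_x^w}(w')+h_{S_x^w}(-w')=0$, where $A_x$ denotes the subspace corresponding to $\aff(S_x^w)$.

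First I would apply \cref{lem:support_function_faces_fb} with its ``$u$'' equal to our $w$ and its ``$w$'' equal to an arbitrary $w'\in W$, and once more with $-w'$, and add the two identities:
\[
h_F(w')+h_F(-w')=\int_{\pi(K)}\bigl(h_{S_x^w}(w')+h_{S_x^w}(-w')\bigr)\d x .
\]
The integrand is pointwise $\le 0$ and integrable (it is the sum of the two integrands in \cref{lem:support_function_faces_fb}), so the left-hand side vanishes iff the integrand vanishes almost everywhere. Hence
\[
A^\perp=\{\,w'\in W : w'\perp A_x \text{ for almost every } x\in\pi(K)\,\}.
\]

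Next I would rewrite the right-hand side as $(A^*)^\perp$ for a suitable $A^*$. Let $\mathcal G$ be the family of subspaces $\spn(A_u : u\in\oo')$, where $\oo'$ runs over the subsets of $\pi(K)$ with $\pi(K)\setminus\oo'$ of measure zero. Because the intersection of two such $\oo'$ is again of this kind and $\spn(A_u : u\in\oo_1'\cap\oo_2')\subseteq\spn(A_u : u\in\oo_i')$, the family $\mathcal G$ is downward directed, and a downward directed family of subspaces of the finite-dimensional space $W$ has a least element; that least element is $A^*:=\bigcap_{\oo'}\spn(A_u : u\in\oo')$ (intersection over all such $\oo'$), and being a member of $\mathcal G$ it has the form $A^*=\spn(A_u : u\in\oo)$ for a single $\oo$ with $\pi(K)\setminus\oo$ null. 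One checks directly that $(A^*)^\perp$ is exactly the set in the previous display: if $w'\perp A^*$ then $w'\perp A_u$ for all $u\in\oo$, hence a.e.; conversely, if $w'\perp A_x$ on a full-measure set $\oo''$ then $w'\perp\spn(A_x : x\in\oo'')\supseteq A^*$. Combining with the first step, $A^\perp=(A^*)^\perp$, so $A=A^*$ — which is both the asserted intersection formula and, via this $\oo$, the ``moreover''.

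The one step that genuinely needs care is this last reduction: turning ``$w'\perp A_x$ for a.e.\ $x$'' into a spanning statement, i.e.\ checking that the intersection defining $A^*$ is attained by a single full-measure set and not merely by an infinite intersection of spans. That is exactly the content of the ``moreover'' part, and it is settled by the downward-directedness/finite-dimensionality argument above, with no continuity or measurable-selection input on $x\mapsto S_x^w$ required; everything else is a routine application of \cref{lem:support_function_faces_fb} together with elementary convexity.
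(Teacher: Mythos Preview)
Your proof is correct and takes a genuinely different route from the paper's. The paper establishes the two inclusions separately: for $A\subset W_\oo$ it invokes \cref{lem:faces_fb} (iterated faces), extending $w$ to a basis $w,w_2,\dots,w_s$ of $W_\oo^\perp$ and arguing that $S_u^w=S_u^{w,w_2,\dots,w_s}$ for $u\in\oo$ forces $F=(\Sigma_\pi K)^{w,w_2,\dots,w_s}$; for $W_\oo\subset A$ it argues by contradiction, assuming an extra $w_{s+1}\in A^\perp\setminus W_\oo^\perp$ and using \cref{lem:support_function_faces_fb} together with super\-additivity of the min-support function to push linearity in $w_{s+1}$ from $h_F$ down to $h_{S_u^w}$ for almost all $u$. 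Your argument bypasses both steps by working directly with orthogonal complements: the single identity $h_F(w')+h_F(-w')=\int\bigl(h_{S_x^w}(w')+h_{S_x^w}(-w')\bigr)\d x$ with a nonpositive integrand gives $A^\perp=\{w':w'\perp A_x\text{ a.e.}\}$ in one stroke, and the directedness/finite-dimension argument then identifies this with $(A^*)^\perp$. This is shorter and uses only \cref{lem:support_function_faces_fb}; the paper's route, by contrast, makes the link to iterated faces explicit, which is what gets reused later (e.g.\ in the proof of \cref{thm:normal_cone_fb}).
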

\begin{proof}
We write $W_\oo:=\spn(A_u\colon u\in\oo)$. The equality $W_\oo=\bigcap_{\oo'} \spn(A_u\colon u\in\oo')$ follows from the fact that the intersection is a subspace of some dimension. Hence, finitely many $\oo'$ already suffice and we may take $\oo$ to be the intersection of these.
Pick a basis $w,w_2,\dots,w_s$ of the orthogonal complement of $W_\oo$ in $W$. For any $u\in \oo$ we have
\[
S_u^w=S_u^{w,w_2,\dots,w_s}
\]
by choice of $W_\oo$. Thus, by \cref{lem:faces_fb}
\[
F=(\Sigma_\pi K)^{w,w_2,\dots,w_s}
\]
which shows $A\subset W_\oo$.

Assume $\dim A<\dim W_\oo$. Then there exists $w_{s+1}\in W$, $w,w_2,\dots,w_s,w_{s+1}$ linearly independent such that 
\[
F=(\Sigma_\pi K)^{w,w_2,\dots,w_s,w_{s+1}}.
\]
We denote by $h$ the support function of $F$ and by $h_u$ the support function of $S_u^w$. Let $x\in W$ and $\lambda\in\R$. Then by assumption $h(x+\lambda w_{s+1})=h(x)+\lambda h(w_{s+1})$. Hence,
\[
\int h_u(x+\lambda w_{s+1}) \d u\stackrel{\ref{lem:support_function_faces_fb}}{=} h(x+\lambda w_{s+1})=h(x)+\lambda h(w_{s+1})\stackrel{\ref{lem:support_function_faces_fb}}{=}\int h_u(x) \d u +\lambda \int h_u(w_{s+1})\d u.
\]
Moreover, for every $u\in V$ we have
\[
h_u(x+\lambda w_{s+1})\le h_u(x)+\lambda h_u(w_{s+1}), \text{ equivalently } 0\le  h_u(x)+\lambda h_u(w_{s+1})-h_u(x+\lambda w_{s+1}).
\]
Hence,
\[
0= \int  h_u(x)+\lambda h_u(w_{s+1})-h_u(x+\lambda w_{s+1}) \d u
\]
implies $h_u(x+\lambda w_{s+1}) = h_u(x)+\lambda h_u(w_{s+1})$ for almost all $u\in V$.

Being linear in the last coordinate however means that $S_u^w$ is contained in a translate of the orthogonal complement of $\spn(w_{s+1})$. By assumption this does not hold for almost all $u$, hence the assumption $h(x+\lambda w_{s+1})=h(x)+\lambda h(w_{s+1})$ for all $x\in W$ and $\lambda\in \R$ was wrong and the statement follows.
\end{proof}

\begin{rem}
(i): Removing some set of measure zero is necessary in general. We give a concrete example for the fiber body studied in \cref{sec:ternary_quartics}. A proof with a precise statement is contained in \cref{ex:1-dim-vanishing}.

There exists a thin subset $\mathcal{T}\subset\Sigma_{3,4}$ such that for every $f\in\mathcal{T}$ the \gs\ $\gram(f)$ has a face of dimension 1. Pick one such $f$ and a general point $w$ in the normal cone of this face. The face on $\Sigma_\mu K$ in direction $w$ will be an extreme point and not a face of positive dimension.

(ii): The two easiest cases included in \cref{thm:dimension_face_fb} are when the face $S_u^w$ has either dimension 0 or dimension $n-1$ for almost all $u$. In the first case almost all subspaces $A_u$ are zero. Then the same holds for their sums. In the second case almost all are hyperplanes with orthogonal complement spanned by $w$, hence the same holds for their sum as well.
The first case is also included in \cref{prop:fb_extreme_points}.
\end{rem}

We now prepare the proof of \cref{thm:normal_cone_fb}.

\begin{lem}
\label{lem:basis_extension}
Let $0\neq w,w'\in W$ such that $S_u^w\neq S_u^{w'}$ for all $u\in\oo$ where $\oo\subset V$ has non-zero measure. Then we can find $w_1,\dots,w_s\in W$ such that $w,w_1,\dots w_s$ is a basis of $W$ and $S_u^{w,w_1,\dots,w_s}\neq S_u^{w',w_1,\dots,w_s}$ for all $u\in\oo'\subset\oo$ where $\oo'$ has non-zero measure.
\end{lem}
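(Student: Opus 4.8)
The plan is to build the vectors $w_i$ one at a time. Put $s=\dim W-1$, and suppose inductively that after $k$ steps we have $w_1,\dots,w_k\in W$ with $w,w_1,\dots,w_k$ linearly independent, together with a measurable set $\oo_k\subseteq\oo$ of positive measure such that $S_u^{w,w_1,\dots,w_k}\neq S_u^{w',w_1,\dots,w_k}$ for all $u\in\oo_k$; for $k=0$ this is exactly the hypothesis, with $\oo_0=\oo$. Once $k=s$ the vectors $w,w_1,\dots,w_s$ are $\dim W$ many and linearly independent, hence form a basis of $W$, and $\oo':=\oo_s$ is as required. So the lemma reduces to a single inductive step: given $k<s$ and the data above, produce $w_{k+1}$ and $\oo_{k+1}$.

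For the step, write $G_u:=S_u^{w,w_1,\dots,w_k}$ and $G_u':=S_u^{w',w_1,\dots,w_k}$, so that for $u\in\oo_k$ these are distinct nonempty faces of $S_u$ and, since iterated faces are formed successively, $S_u^{w,w_1,\dots,w_k,\eta}=G_u^\eta$ and $S_u^{w',w_1,\dots,w_k,\eta}=(G_u')^\eta$ for every $\eta\in W$. For each $u\in\oo_k$ exactly one of $G_u\not\subseteq G_u'$ and $G_u\subseteq G_u'$ holds, and these two conditions split $\oo_k$ into two semialgebraic (hence measurable) pieces; I take one of positive measure, $\oo_k^{\ast}$. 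On it I set $C_u:=G_u$ and $F_u:=G_u\cap G_u'$ in the first case — discarding the part of $\oo_k^{\ast}$ where $G_u\cap G_u'=\emptyset$, on which the step is immediate (see below) — and $C_u:=G_u'$ and $F_u:=G_u$ in the second; either way $F_u$ is a nonempty proper face of the compact convex set $C_u$ and is also a face of the spectrahedron $S_u$ contained in $C_u$. Faces of spectrahedra being exposed, $F_u=S_u^{\eta_0(u)}$ for some $\eta_0(u)\in W$ (the direction being taken in $W$ via \cref{lem:correspondence_functionals}); then $F_u=C_u^{\eta_0(u)}$ because $\emptyset\neq F_u\subseteq C_u\subseteq S_u$, and $\eta_0(u)\neq 0$ since $F_u\subsetneq C_u$. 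The point of this set-up is: if a direction $\eta$ satisfies $C_u^\eta\cap F_u=\emptyset$, then $S_u^{w,w_1,\dots,w_k,\eta}$ and $S_u^{w',w_1,\dots,w_k,\eta}$ are distinct, since they are contained in $G_u$ and $G_u'$ respectively, one of them is exactly $C_u^\eta$ (hence nonempty), while a common value of the two would lie in $G_u\cap G_u'\subseteq F_u$. So it suffices to exhibit one direction $\eta=w_{k+1}$, not lying in the (null) subspace $\spn(w,w_1,\dots,w_k)$, with $C_u^\eta\cap F_u=\emptyset$ for a positive-measure set of $u\in\oo_k^{\ast}$; one then takes $\oo_{k+1}:=\{u\in\oo_k^{\ast}:C_u^{w_{k+1}}\cap F_u=\emptyset\}$.

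The crux is that for each fixed $u\in\oo_k^{\ast}$ the set $B_u:=\{\eta\in W:C_u^\eta\cap F_u=\emptyset\}$ is open and nonempty, hence of positive Lebesgue measure. It is nonempty because $-\eta_0(u)\in B_u$: since $\langle\,\cdot\,,\eta_0(u)\rangle$ is nonconstant on $C_u$ (its minimising face $F_u$ is proper), the face of $C_u$ on which it is maximal is disjoint from $F_u$. It is open because, if $\eta'\in B_u$, the compact sets $C_u^{\eta'}$ and $F_u$ are disjoint and the face map $\eta\mapsto C_u^\eta$ is upper hemicontinuous, so $C_u^\eta$ stays inside a small neighbourhood of $C_u^{\eta'}$, hence disjoint from $F_u$, for $\eta$ near $\eta'$. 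I would then apply Tonelli's theorem to $\mathcal B:=\{(u,\eta)\in\oo_k^{\ast}\times W:C_u^\eta\cap F_u=\emptyset\}$: each slice $B_u$ has positive measure and $\oo_k^{\ast}$ has positive measure, so $\mathcal B$ has positive measure, and therefore a positive-measure set of directions $\eta$ have positive-measure slice in $\oo_k^{\ast}$; any such $\eta$ outside $\spn(w,w_1,\dots,w_k)$ is our $w_{k+1}$. I expect this extraction to be the main obstacle: the witness $\eta_0(u)$ for $F_u\subsetneq C_u$ genuinely depends on $u$, so no single direction is forced to work for all $u$ at once, and it is exactly the measure-theoretic step that converts the family of pointwise-nonempty open sets $B_u$ into one direction serving positively many $u$.

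Two routine points remain. For Tonelli one needs measurability, which follows from semialgebraicity: the graph $\{(u,X):X\in S_u\}$ is semialgebraic, hence so are the graphs of the iterated faces $S_u^{w,w_1,\dots,w_k}$ and $S_u^{w',w_1,\dots,w_k}$ (their defining support-function values are semialgebraic functions of $u$) and of $(u,\eta)\mapsto C_u^\eta$, so $\mathcal B$ is semialgebraic and in particular Lebesgue measurable. And the single degenerate possibility, namely $F_u=\emptyset$ (which can occur only in the case $C_u=G_u$, where it means $G_u\cap G_u'=\emptyset$), makes the step trivial on the part of $\oo_k^{\ast}$ where it happens: there the faces $S_u^{w,w_1,\dots,w_k,\eta}\subseteq G_u$ and $S_u^{w',w_1,\dots,w_k,\eta}\subseteq G_u'$ are disjoint for every $\eta\in W$, so any $w_{k+1}\notin\spn(w,w_1,\dots,w_k)$ works there.
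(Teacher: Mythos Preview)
Your argument is correct, but it takes a substantially heavier route than the paper's. The paper disposes of the lemma in one stroke: it chooses $w_1\in\{w',-w'\}$ and argues, by a short case analysis, that after removing a null set of $u$ one has $S_u^{w,w_1}\cap S_u^{w',w_1}=\emptyset$; since disjointness of nonempty iterated faces is preserved by any further refinement, one may then complete $w,w_1$ to a basis of $W$ arbitrarily. No measure theory beyond ``remove a null set'' and no semicontinuity of the face map are needed.

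Your approach---an induction on $k$, at each step producing $w_{k+1}$ via a Tonelli argument applied to the open, nonempty slices $B_u$---is more general-purpose and would survive in settings where no explicit candidate like $\pm w'$ is available. It is worth noting, though, that your single inductive step already yields disjointness, not merely distinctness: in your case~1, $G_u^{w_{k+1}}\cap (G_u')^{w_{k+1}}\subseteq G_u^{w_{k+1}}\cap G_u\cap G_u'=G_u^{w_{k+1}}\cap F_u=\emptyset$, and similarly in case~2. Hence after one step you, too, could complete to a basis arbitrarily and skip the remaining induction. The upper hemicontinuity of $\eta\mapsto C_u^\eta$ (Berge's theorem for a fixed compact feasible set) and the semialgebraicity bookkeeping are valid as stated, so the extra machinery, while unnecessary here, is not wrong.
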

\begin{proof}
If the two faces $S_u^{w,w'}$ and $S_u^{w,-w'}$ do not intersect, the same holds for $S_u^{w,-w'}$ and $S_u^{w',-w'}$: Indeed, assume $x\in S_u^{w,-w'}\cap S_u^{w',-w'}$. Then $\bil{x}{w}$ is the minimal value of $\bil{\cdot}{w}$ on $S_u$ and $\bil{x}{w'}$ is the minimal value of $\bil{\cdot}{w'}$ on $S_u$. Hence, $\bil{x}{w'}$ is also the minimal value of $\bil{\cdot}{w'}$ on $S_u^w$, i.e. $x\in S_u^{w,w'}$. Thus, $x\in S_u^{w,w'}\cap S_u^{w,-w'}=\emptyset$.
Thus we may take $w_1=-w'$ and complete to a basis of $W$.

Assume that $S_u^{w,w'}$ and $S_u^{w,-w'}$ do intersect, then $\bil{\cdot}{w'}$ and $\bil{\cdot}{-w'}$ minimize on a common point of $S_u^w$ and therefore they have constant value on all of $S_u^w$. Either $S_u^{w'}$ then contains all of $S_u^w$ or does not intersect $S_u^w$ at all. In the second case we are done by choosing any basis. In the first case $S_u^{-w'}$ intersects $S_u^{w'}$ in at least $S_u^w$. But then $\bil{\cdot}{w'}$ and $\bil{\cdot}{-w'}$ have the same value on these faces and thus $\bil{\cdot}{w'}$ has constant value on all of $S_u$ which means $S_u$ is contained in the orthogonal complement of $\spn(w')$. Since $\pi$ is surjective, this is the case only for a subset of measure zero which we may remove.

Therefore, we may take $w_1=w'$ and complete to a basis of $W$. For $\oo'$ we remove all $u\in\oo$ from $u$ such that $S_u$ is contained in the orthogonal complement of $\spn(w')$.
\end{proof}

\begin{rem}
We do not know if all faces of the fiber body are exposed faces. This does hold for spectrahedra, hence it might also hold for the fiber body in this case.
\end{rem}

\begin{dfn}
For any closed convex set $C\subset\R^n$ and any face $F\subset C$, the \todfn{normal cone} of $F$ is
\[
\normal_C(F)=\{u\in\R^n\colon \bil{u}{x}\le \bil{u}{y}\,\all x\in F, y\in C\}.
\]
For any point $x\in C$ we define the normal cone $\normal_C(x)$ as the normal cone at $F$ where $F$ is the supporting face of $x$, i.e. the smallest face of $C$ containing $x$.
\end{dfn}

Up to fixing a scalar product and identifying $\R^n$ with its dual space, this is the same as $\{l\in(\R^n)^\du\colon l(x)\le l(y)\,\all x\in F, y\in C\}$.

\begin{thm}
\label{thm:normal_cone_fb}
Let $0\neq w\in W$ and let $F\subset\Sigma_\pi K$ be the face in direction $w$. Then the normal cone of $F$ is given by
\[
\normal_{\Sigma_\pi K}(F)=\overline{\bigcup_\oo\bigcap_{u\in\oo} \normal_{S_u}(S_u^w)}
\]
where the union is over all subsets $\oo\subset V$ such that its complement $\oo^c$ has measure zero.
\end{thm}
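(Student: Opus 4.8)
The plan is to reduce the statement to the pointwise equivalence
\[
w'\in\normal_{\Sigma_\pi K}(F)\iff w'\in\normal_{S_u}(S_u^w)\ \text{ for almost every }u,
\]
from which the claimed formula follows at once (even without taking the closure). I will use two elementary facts about a closed convex set $C$ and a nonempty face $G\subseteq C$: first, $\normal_C(G)=\{v\colon G\subseteq C^v\}$, straight from the definition of the normal cone; second, $G\subseteq C^v$ holds if and only if $\bil{\cdot}{v}$ is constant on $G$ with that constant equal to $h_C(v)$, and constancy of $\bil{\cdot}{v}$ on $G$ is equivalent to $h_G(v)+h_G(-v)=0$ (the left-hand side being $\min_G\bil{\cdot}{v}-\max_G\bil{\cdot}{v}\le 0$). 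Applied to a fiber, $w'\in\normal_{S_u}(S_u^w)$ means exactly $S_u^w\subseteq S_u^{w'}$, i.e. $\bil{\cdot}{w'}\equiv h_{S_u}(w')$ on $S_u^w$.

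For the inclusion ``$\supseteq$'' it suffices, since $\normal_{\Sigma_\pi K}(F)$ is closed, to take $w'\in\bigcap_{u\in\oo}\normal_{S_u}(S_u^w)$ for some $\oo$ with $\oo^c$ of measure zero and to check $w'\in\normal_{\Sigma_\pi K}(F)$. For such $w'$ we have $\bil{w'}{z}=h_{S_u}(w')$ for every $z\in S_u^w$ and almost every $u$. Given $x\in F$ and $y\in\Sigma_\pi K$, \cref{lem:faces_fb} lets us write $x=\int\gamma(u)\d u$ with $\gamma(u)\in (S_u)^w=S_u^w$, and by definition $y=\int\delta(u)\d u$ with $\delta(u)\in S_u$. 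Then $\bil{w'}{\gamma(u)}=h_{S_u}(w')\le\bil{w'}{\delta(u)}$ for almost every $u$, and integrating gives $\bil{w'}{x}\le\bil{w'}{y}$; as $x,y$ were arbitrary, $w'\in\normal_{\Sigma_\pi K}(F)$.

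For ``$\subseteq$'' let $w'\in\normal_{\Sigma_\pi K}(F)$, so $F=(\Sigma_\pi K)^w\subseteq(\Sigma_\pi K)^{w'}$, which by the facts above means $\bil{\cdot}{w'}$ is constant on $(\Sigma_\pi K)^w$ with value $h_{\Sigma_\pi K}(w')$. Constancy plus \cref{lem:support_function_faces_fb} (for the face direction $w$ and test directions $\pm w'$) gives
\[
0=h_{(\Sigma_\pi K)^w}(w')+h_{(\Sigma_\pi K)^w}(-w')=\int\bigl(h_{S_u^w}(w')+h_{S_u^w}(-w')\bigr)\d u,
\]
and since the integrand is pointwise $\le 0$ it must vanish for almost every $u$; thus $\bil{\cdot}{w'}$ is constant on $S_u^w$ with value $h_{S_u^w}(w')$ for almost every $u$. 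Next, using \cref{lem:support_function_faces_fb} with the trivial direction $0$ to write $h_{\Sigma_\pi K}(w')=\int h_{S_u}(w')\d u$, the identity $\int\bigl(h_{S_u^w}(w')-h_{S_u}(w')\bigr)\d u=h_{(\Sigma_\pi K)^w}(w')-h_{\Sigma_\pi K}(w')=0$ together with $h_{S_u^w}(w')\ge h_{S_u}(w')$ (as $S_u^w\subseteq S_u$) forces $h_{S_u^w}(w')=h_{S_u}(w')$ for almost every $u$. Intersecting the two conull sets, for almost every $u$ we obtain $\bil{\cdot}{w'}\equiv h_{S_u}(w')$ on $S_u^w$, that is $w'\in\normal_{S_u}(S_u^w)$; taking $\oo$ to be this conull set places $w'$ in the union.

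The only genuinely technical point is measurability of $u\mapsto h_{S_u^w}(\pm w')$, which is needed to invoke \cref{lem:support_function_faces_fb} and which I would cite from \cite{MM21} rather than reprove. I would also remark that the ``$\subseteq$'' argument lands $w'$ in the union itself, and that a short limit argument (continuity of support functions and the fact that a countable union of null sets is null) shows the union is already closed, so the closure is a harmless precaution kept for uniformity with \cref{thm:dimension_face_fb}. A variant closer to the preparation in \cref{lem:basis_extension} would instead compare the faces $S_u^{w,w'}$ and $S_u^w$ and push through \cref{lem:faces_fb} as in the proof of \cref{thm:dimension_face_fb}, but the support-function route above looks the most economical.
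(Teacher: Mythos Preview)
Your proof is correct and takes a genuinely different route from the paper's. The paper argues the inclusion $\subseteq$ by contradiction via the preparatory \cref{lem:basis_extension}: if $w'$ lies in the relative interior of $\normal_{\Sigma_\pi K}(F)$ but outside every intersection, then $S_u^{w'}\neq S_u^w$ on a set of positive measure, and after extending to a basis the resulting iterated faces are distinct extreme points, contradicting the essential uniqueness of sections from \cref{prop:fb_extreme_points}. This only places the \emph{relative interior} of the normal cone inside the union, which is why the paper keeps the closure. Your argument instead pushes the condition $F\subseteq(\Sigma_\pi K)^{w'}$ directly through the support-function integrals of \cref{lem:support_function_faces_fb}, splitting it into two nonnegative-integrand-with-zero-integral steps (constancy of $\bil{\cdot}{w'}$ on $S_u^w$, and that constant equalling $h_{S_u}(w')$). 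This is shorter, avoids \cref{lem:basis_extension} entirely, and as you observe lands every $w'\in\normal_{\Sigma_\pi K}(F)$ in the union itself, so the closure is indeed redundant. The paper's approach has the merit of making the geometric picture (distinct fiber faces remain distinct after refinement) explicit, which motivates \cref{lem:basis_extension} as a tool in its own right; but for this theorem your support-function route is the more economical proof, and is closer in spirit to the paper's own proof of \cref{thm:dimension_face_fb}.
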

\begin{proof}
Let $F_u:=S_u^w$.
For every $\oo$ the intersection of the normal cones is contained in the normal cone at $F$ by \cref{lem:faces_fb}. Hence
\[
\normal_{\Sigma_\pi K}(F)\supset\bigcup_\oo\bigcap_{u\in\oo} \normal_{S_u}(S_u^w).
\]
As the left-hand-side is closed, we may also take the closure of the right-hand-side.

Assume there is $w'$ not in any intersection but in the relative interior of the normal cone at $F$.
Then there exists a set $\oo$ with non-zero measure such that for every $u\in\oo:$ $S_u^{w'}\neq F_u$. By \cref{lem:basis_extension} we can find $w_1,\dots,w_s\in W$ such that $S_u^{w',w_1,\dots,w_s}\neq F_u^{w_1,\dots,w_s}$ are extreme points for all $u\in \oo'\subset \oo$ with $\oo'$ having non-zero measure.
Thus, $(\Sigma_\pi K)^{w,w_1,\dots,w_s}\neq (\Sigma_\pi K)^{w',w_1,\dots,w_s}$ as the section defining extreme points is basically unique by \cref{prop:fb_extreme_points}. But this means $F\neq (\Sigma_\pi K)^{w'}$ contradicting the fact that $w'$ is in the normal cone of $F$.
Therefore,
\[
\interior(\normal_{\Sigma_\pi K}(F))\subset\bigcup_\oo\bigcap_{u\in\oo} \normal_{S_u}(S_u^w)
\]
and by closing both sides the result follows.
\end{proof}

\begin{rem}
\cref{thm:normal_cone_fb} can also be read as follows: Let $N$ be the normal cone at $F$ and $N_u$ the normal cone at $S_u^w$. If $w\notin N_u$ for all $u\in\oo$ where $\oo$ is a set with non-zero measure, then $w\notin N$.
Conversely, if $w\in N_u$ for almost all $u$, then $w\in N$.

Note that $w$ being contained in $N_u$ for all $u$ in a set with non-zero measure does not allow any conclusion.
\end{rem}

\begin{rem}
Informally speaking, \cref{thm:dimension_face_fb} says that faces of the fiber body want to be as large as possible. Whereas \cref{thm:normal_cone_fb} means that normal cones want to be as small as possible and make the boundary look as smooth as possible.
\end{rem}

\begin{prop}
\label{prop:continuous_points_nc}
Let $S:=S_u, u\in V$ be a spectrahedron with algebraic boundary defined by a polynomial $F$, and let $r\in\N$. Assume
\begin{enumerate}
\item there are only finitely many rank $r$ points in $S$,
\item there are no points of rank $<r$,
\item the intersection of $\aff(S)\otimes_\R \C$ with the set of rank $r$ matrices is transversal in all points.
\end{enumerate}
Then the rank $r$ points depend continuously on $u$ in a neighbourhood of $u$.

If moreover, at any of the psd rank $r$ points the lowest degree term of the Taylor expansion of $F$ is irreducible, and for all $u'$ in a neighbourhood of $u$ this term has the same degree and the number of rank $r$ psd points in constant, then the normal cones at the rank $r$ points also depend continuously on $u$.
\end{prop}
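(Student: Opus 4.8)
The plan is to treat the two assertions in turn: the first by applying the implicit function theorem to a square-root parametrisation of the psd rank $r$ locus, the second by controlling the tangent cone to the algebraic boundary at the extreme points produced in the first part.

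\textbf{Continuity of the rank $r$ points.} First I would record that by (i) and (ii) the set $\{X\in S:\rk X\le r\}$ equals the finite set of rank $r$ points $P_1,\dots,P_k$, and that each $P_i$ is an \emph{extreme} point of $S$: the relative interior of its supporting face consists of matrices of constant image, and would otherwise be a positive-dimensional family of matrices of rank $\le r$. I would also check that (i)--(iii) force $\dim\aff(S)=\binom{N-r+1}{2}$ -- a transverse complex intersection through the real psd point $P_i$ of larger-than-expected dimension would, near $P_i$, contain a positive-dimensional family of psd rank $r$ matrices lying in $S$, contradicting (i), while of smaller dimension it would be empty -- so that in the cases relevant to the paper $\aff(S)=\pi^{-1}(u)$ ($S$ is full-dimensional in its fibre); I would assume this for simplicity, handling the general case by working inside $\aff(S_{u'})$ once one knows it varies continuously with $u'$. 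Now fix $P=P_i$, write $P=M_0M_0^{\top}$ with $M_0\in\R^{N\times r}$ of rank $r$ (unique up to right multiplication by an element of $O(r)$), and for $u'$ near $u$ set $\psi_{u'}\colon\R^{N\times r}\to V$, $M\mapsto\pi(MM^{\top})-u'$. Since $MM^{\top}$ is automatically psd, the zero set of $\psi_{u'}$ intersected with the open set $\{\rk M=r\}$ consists exactly of square roots of the rank $r$ points of $S_{u'}$. The differential $d\psi_u(M_0)H=\pi(M_0H^{\top}+HM_0^{\top})$ is, precisely by transversality (iii) (with $\aff(S)=\pi^{-1}(u)$), surjective with kernel the tangent space $\{M_0A:A^{\top}=-A\}$ to the $O(r)$-orbit of $M_0$; hence by the constant-rank theorem $\psi_{u'}^{-1}(0)$ is, near $M_0$, a single $O(r)$-orbit $M(u')\cdot O(r)$ depending real-analytically on $u'$, so $P_i(u'):=M(u')M(u')^{\top}$ is a well-defined real-analytic function of $u'$ with $P_i(u)=P_i$, still psd of rank $r$ because it stays of rank $r$ and close to $P_i$ (psd matrices of rank exactly $r$ are open among the rank $r$ symmetric matrices). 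Finally I would rule out stray rank $r$ points: all $S_{u'}$ lie in the fixed compact set $K$, so a family $Q(u')\in S_{u'}$ of rank $\le r$ with $u'\to u$ subconverges to some $Q\in S$ of rank $\le r$, whence $Q=P_i$ for some $i$ and $Q(u')=P_i(u')$ for $u'$ close to $u$ by the local uniqueness above. Thus near $u$ the rank $r$ points of $S_{u'}$ are exactly $P_1(u'),\dots,P_k(u')$, depending continuously on $u'$.

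\textbf{Continuity of the normal cones.} Using the additional hypothesis (equivalently, the conclusion of the first part) that the number of psd rank $r$ points stays constant, each $P_i(u')$ remains isolated among the rank $\le r$ points of $S_{u'}$, hence an extreme point, so $\normal_{S_{u'}}(P_i(u'))$ is the dual cone of the convex tangent cone $T_{u'}:=\overline{\bigcup_{t>0}t\,(S_{u'}-P_i(u'))}$. The boundary of $T_{u'}$ lies in the tangent cone at $P_i(u')$ of the algebraic boundary hypersurface of $S_{u'}$, which is cut out by the lowest-degree term $F^{(i)}_{u'}$ of the Taylor expansion at $P_i(u')$ of the boundary polynomial of $S_{u'}$. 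Since $P_i(u')$ is extreme, $T_{u'}$ is a full-dimensional pointed cone, so its boundary is Zariski dense in $\{F^{(i)}_{u'}=0\}$; irreducibility of $F^{(i)}_{u'}$ then lets one recover $T_{u'}$ from $\{F^{(i)}_{u'}=0\}$ together with any fixed interior direction (for instance one pointing from $P_i(u')$ into $\mathrm{relint}\,S_{u}$), namely as the closure of the connected component of the complement of $\{F^{(i)}_{u'}=0\}$ containing that direction. Now the boundary polynomial of $S_{u'}$ (a factor of $\det$ restricted to the fibre) varies polynomially with $u'$, the point $P_i(u')$ varies continuously by the first part, and $\deg F^{(i)}_{u'}$ is constant by hypothesis, so the coefficients of $F^{(i)}_{u'}$ in a fixed monomial basis vary continuously with $u'$; granting that the convex cone determined by a continuously varying irreducible hypersurface of fixed degree and a continuously varying interior direction is Hausdorff-continuous, its dual cone -- the normal cone $\normal_{S_{u'}}(P_i(u'))$ -- is continuous as well.

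\textbf{Expected main obstacle.} The crux is exactly this last step: passing from continuity of the coefficients of $F^{(i)}_{u'}$ to Hausdorff-continuity of $T_{u'}$, equivalently of $\normal_{S_{u'}}(P_i(u'))$. This is where irreducibility and the constancy of the degree are essential -- they keep $\{F^{(i)}_{u'}=0\}$ from splitting off extra components or dropping degree, either of which could make the convex region jump discontinuously -- and I would carry it out via the conormal (dual) variety, writing $\normal_{S_{u'}}(P_i(u'))$ as the closed conic hull of $\{\nabla F^{(i)}_{u'}(x):x\in\partial T_{u'},\ F^{(i)}_{u'}\text{ smooth at }x\}$ with a consistent sign choice, and arguing that this image varies continuously with the irreducible, fixed-degree polynomial $F^{(i)}_{u'}$ and the point $P_i(u')$. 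A lesser technical point, already in the first part, is the continuity of $\aff(S_{u'})$ -- equivalently of $\dim S_{u'}$ -- for $u'$ near $u$; this is immediate when $S$ is full-dimensional in its fibre, and in general follows from semicontinuity of the dimension.
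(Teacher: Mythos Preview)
Your proposal is correct and rests on the same underlying ideas as the paper, but your execution is considerably more detailed and, for the first assertion, uses a genuinely different local model. The paper's proof of continuity of the rank $r$ points is two sentences: it views them as the real psd members of the finite complex intersection $(\aff(S)\otimes_\R\C)\cap\{\rk\le r\}$ and observes that transversality keeps this intersection finite, whence the implicit function theorem gives continuous dependence on $u$. You instead parametrise the psd rank $r$ locus through the square-root map $M\mapsto MM^\top$ and apply the constant-rank theorem to $\psi_{u'}$; this has the advantage of producing psd matrices automatically and yielding explicit real-analytic dependence, at the price of the preliminary dimension count $\dim\aff(S)=\binom{N-r+1}{2}$ (needed so that $\ker d\psi_u(M_0)$ really collapses to the $O(r)$-orbit directions) and the separate compactness argument to exclude stray rank $r$ points. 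For the second assertion the two proofs coincide in substance: both identify the algebraic boundary of the normal cone with the dual variety of the lowest-degree Taylor term $F^{(i)}_{u'}$ and invoke continuity of its coefficients. The paper stops there, simply stating that the boundary of the normal cone therefore varies continuously; you correctly flag this passage from coefficient continuity to Hausdorff continuity of the cone as the crux, and your sketch via the conormal description is exactly how one would make the paper's last sentence rigorous.
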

\begin{proof}
The rank $r$ points on $S$ are given as the real psd matrices of the intersection $(\aff(S)\otimes \C)\cap \{\mathrm{rk}\le r\}$. Since the intersection is transversal the number of intersection points is constant in a neighbourhood of $u$. Hence, they depend continuously on $u$ by the implicit function theorem.

The algebraic boundary of the normal cone at any rank $r$ point is given as the dual variety to the lowest degree part of the Taylor expansion of $F$ at the point. This lowest degree is constant by assumption and the coefficients depend continuously on $u$. Therefore, the same holds for the boundary of the normal cone.
\end{proof}

\section{Faces of Gram spectrahedra}
\label{sec:faces_of_gram_spectrahedra}

In \cref{sec:binary_sextics} and \cref{sec:ternary_quartics} we study the fiber body of two families of \gsa. In this section we provide the necessary background on \gsa\ and prove a formula to calculate the dimensions of normal cones (\cref{thm:normal_cones}).

Let $n, d\in\N$. We write $\Rx_d=\R[x_1,\dots,x_n]_d$ for the vector space of homogeneous degree $d$ polynomials in the variables $x_1,\dots,x_n$. Consider the cone of all sums of squares $\Sigma_{n,2d}\subset\Rx_{2d}=\R[x_1,\dots,x_n]_{2d}$, that is the set of all forms in $\Rx_{2d}$ such that there exist $f_1,\dots,f_r\in\Rx_d$ with $f=\sum_{i=1}^r f_i^2$.
Let $N:=\dim \Rx_d$ and consider the Gram map $\mu\colon \Sym^N\to \Rx_{2d},\, G\mapsto XGX^T$ where $X$ is the vector containing the monomial basis of $\Rx_d$ in some fixed order.
As this is a linear map we can use the fiber body construction from \cref{sec:fb_spec}.

In \cref{sec:binary_sextics} and \cref{sec:ternary_quartics} we use our findings from the last section to study the boundary of the fiber body for some families of Gram spectrahedra. We therefore recall what is known about the boundary of such \gsa. All results cited are taken from \cite{Scheiderer18} and \cite{Vill21}.

Instead of symmetric matrices we are going to work with symmetric tensors which is the same up to fixing a basis. We denote by $\sy{\Rx_d}$ the second symmetric power of $\Rx_d$. Let $\theta\in\sy{\Rx_d}$ with $\theta=\sum_{i=1}^r a_i p_i\otimes p_i,\, a_i\in\R$. The tensor $\theta$ is called positive semidefinite ($\theta\succeq 0$) if for every $\lambda\in(\Rx_d)^\du$, $\sum_{i=1}^r a_i\lambda(p_i)^2\ge 0$. We denote by $\im(\theta)$ the image of $\theta$ which is defined as the image of the linear map $\Rx_d^\du\to \Rx_d,\, \lambda\mapsto \sum_{i=1}^r a_i \lambda(p_i) p_i$.

Let $f\in\Sigma_{n,2d}$. We write $\gram(f)=\{\theta\in\sy^+\Rx_d\colon \mu(\theta)=f\}$ for the Gram spectrahedron of $f$ where $\mu\colon \sy\Rx_d\to \Rx_{2d}$ is the Gram map given by $p\otimes p\mapsto p^2$. We prefer to use this coordinate-free setup but will rather freely change our point of view when convenient.
We use the same notation as in \cref{sec:fb_spec} and denote by $W$ the kernel of $\mu$ and by $V$ its orthogonal complement with respect to the apolarity pairing:
For $i=1,\dots,n$ define the differential operator $\partial_i\vcentcolon = \frac{\partial}{\partial x_i}$ and $\partial\vcentcolon = (\partial_1,\dots,\partial_n)$, $\partial^\alpha=\partial^{\alpha_1}\cdot\dots\cdot\partial^{\alpha_n}$ for $\alpha\in\Z^n_+$. For $f=\sum_\alpha c_\alpha x^\alpha\in \Rx_d$ define $f(\partial)\vcentcolon = \sum_\alpha c_\alpha \partial^\alpha$. For every $m\ge 0$ we then have the following bilinear form on $\Rx_d$:
\[
\bil{f}{g}\vcentcolon =\frac{1}{d!}f(\partial)(g)=\frac{1}{d!}g(\partial)(f),\ \all f,g\in \Rx_d.
\]
This extends to a scalar product on $\sy\Rx_d$: Indeed, let $p\otimes p,q\otimes q\in\sy{\Rx_d}$, then define $\bil{p\otimes p}{q\otimes q}:= \bil{p}{q}^2$ on rank 1 tensors and extend bilinearly. One easily checks that this again defines a scalar product.

We will mostly consider $\gram(f)$ as a subset of $W$. Note that by \cref{lem:correspondence_functionals} it does not matter if we work inside $W$ or one of its translates.

Let $U\subset\Rx_d$ be a subspace. We denote by $U^2$ the subspace of $\Rx_{2d}$ spanned by all products $pq$ with $p,q\in U$, and by $\Sigma U^2$ the cone of sums of squares of $U$, i.e. the set of all $f\in\Rx_{2d}$ such that there exist $f_1,\dots,f_r\in U$ with $f=\sum_{i=1}^r f_i^2$. As the cone $\Sigma_{n,2d}$, the cone $\Sigma U^2$ is a full-dimensional, closed, convex, pointed cone inside the space $U^2$. In particular, $\dim U^2=\dim\Sigma U^2$. 
For a form $f\in\Sigma_{n,2d}$ lying in the relative interior of $\Sigma U^2$ is equivalent to having an sos representation $f=\sum_{i=1}^r f_i^2$ with $f_1,\dots,f_r$ a basis of $U$, which again is equivalent to having a Gram tensor with image $U$.

\begin{rem}
If we work with matrices instead of tensors we use the standard trace bilinear form on the set of symmetric matrices. Note that the trace scalar product and the apolarity pairing do not translate perfectly. The first one gives the standard scalar product on $\R^{\binom{N}{2}}$ whereas in the second one monomials are weighted by a multifactorial and by the degree. 

Especially in \cref{sec:ternary_quartics} we will stick to the tensor point of view as characterizations of certain faces are more natural.
\end{rem}

\begin{cor}
Let $U\subset\Rx_d$ be a subspace and let $u=v+w\in V\oplus W$ be any tensor with image $U$. Then for every $f\in\Sigma U^2$ the face of $\gram(f)$ in direction $u$ is given by $\{G\in\gram(f)\colon \im(G)\subset\im (u)\}\neq\emptyset$.

Especially, for every $f\in\interior\Sigma U^2$ the dimension of the face of $\gram(f)$ in direction $u$ is $\dim\sy{U}-\dim U^2$.
\end{cor}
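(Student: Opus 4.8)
The plan is to deduce this directly from the two facts already on the table: \cref{lem:correspondence_functionals}, which lets us replace the direction $u=v+w$ by its component in $W$, and \cref{prop:dimension_face_spec}, the facial structure of the psd cone intersected with a fibre. Essentially all the content is bookkeeping: translating between symmetric matrices and symmetric tensors, and between the trace form and the apolarity pairing. First I would fix the dictionary. Write $S:=\mu^{-1}(f)\cap\sy^+\Rx_d=\gram(f)$, which is exactly the spectrahedron attached to the point $f$ in the setup of \cref{sec:fb_spec} with $\pi=\mu$. For a subspace $U\subseteq\Rx_d$ the set $F_U:=\{G\in\sy^+\Rx_d:\im(G)\subseteq U\}$ is a face of the psd cone with corresponding subspace $U$, and its linear span is $\sy U$, so $\dim F_U=\binom{\dim U+1}{2}$. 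I would then record two elementary observations. First, the Gram map sends $F_U$ onto $\Sigma U^2$ and $\interior F_U$ onto $\interior\Sigma U^2$; the second half is precisely the reformulation recalled just above the statement, that $f\in\interior\Sigma U^2$ iff $f$ has a Gram tensor of image exactly $U$. In particular $f\in\Sigma U^2$ is the same as $f\in\mu(F_U)=\pi(F_U)$, which is exactly the condition making $\{G\in\gram(f):\im(G)\subseteq U\}$ non-empty. Second, $\mu$ maps $\spn(F_U)=\sy U$ onto $U^2$, and $F_U$ is full-dimensional in $\sy U$, so $\dim\pi(F_U)=\dim U^2=\dim\Sigma U^2$.

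Granting these, the argument is short. By \cref{lem:correspondence_functionals} the face of $\gram(f)$ in direction $u=v+w$ depends on $u$ only through its $W$-component, hence only through the face $F_U$ of $\sy^+\Rx_d$ that $u$ determines; so we may apply \cref{prop:dimension_face_spec} with $\pi=\mu$, the psd-cone face $F_U$ (corresponding subspace $U$), and the point $f\in\pi(F_U)$. This gives that the face of $\gram(f)$ in direction $u$ equals $\{G\in\gram(f):\im(G)\subseteq\im(u)\}$, which is non-empty because $f\in\Sigma U^2=\mu(F_U)$; and, when $f\in\interior\Sigma U^2=\pi(\interior F_U)$, that its dimension is $\binom{\dim U+1}{2}-\dim\pi(F_U)=\dim\sy U-\dim U^2$.

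The step I expect to need the most care is the clause ``the face $F_U$ that $u$ determines'': one must pair the tensor $u$ of image $U$ with the correct exposed face of the psd cone and its corresponding subspace, and, since faces are taken with the minimum convention while the apolarity pairing and the trace form agree only up to a coordinatewise scaling (the remark preceding the statement), one has to be careful about where orthogonal complements enter. Once that is pinned down, the statement is a verbatim combination of \cref{lem:correspondence_functionals} and \cref{prop:dimension_face_spec}.
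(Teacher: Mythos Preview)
Your proposal is correct and follows exactly the route the paper takes: the paper's own proof is the single sentence ``This is \cref{prop:dimension_face_spec} in the case of Gram spectrahedra,'' and your argument spells out precisely that specialization (together with \cref{lem:correspondence_functionals} and the translation $\mu(F_U)=\Sigma U^2$, $\mu(\interior F_U)=\interior\Sigma U^2$). Your flagged caveat about matching $u$ with the correct exposed face of the psd cone and keeping track of orthogonal complements under the minimum convention is well placed and is indeed the only point requiring care; the paper leaves this implicit.
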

\begin{proof}
This is \cref{prop:dimension_face_spec} in the case of \gsa. A proof can also be found in \cite[section 2]{Scheiderer18}
\end{proof}

\begin{rem}
Note that whenever $U$ is such that $U^2=\Rx_{2d}$ there exists a \sa\ set with non-empty interior, namely $\interior\Sigma U^2$, such that for every $f\in \interior\Sigma U^2$ the face on $\gram(f)$ in direction $u$ has the same dimension $\dim\sy{U}-\dim U^2=\binom{\dim U+1}{2}-\binom{n+2d-1}{2d}$.
\end{rem}

To understand the boundary of $\Sigma_\mu K$ we do not only want to consider the dimensions of the faces but also the normal cones at each point. Since the fiber bodies might not even be \sa\ we especially cannot stratify their boundary by the rank as in the case of spectrahedra. As a substitute we will use the dimension of the normal cones. This is especially useful in the cases of Gram spectrahedra we are interested in as the rank of points on the boundary determines also the dimension of the normal cone.

For normal cones of Gram spectrahedra we have the following theorems.

\begin{thm}[{\cite[Theorem 2.9.]{st2015}}]
\label{thm:st_normal_cones}
Let $f\in \interior\Sigma_{n,2d}$ and $\theta\in\gram(f)$. Write $U\vcentcolon = \im(\theta)$ and $r\vcentcolon = \dim U$. Then
\[
\dim\normal_{\gram(f)}(\theta)=\dim(\sy{\Rx_{d}})-\dim(\ker\mu\cap \Sym(U\otimes \Rx_d))
\]
where $\Sym$ is the symmetrization map $\Rx_d\otimes \Rx_d\to \sy \Rx_d$.
\end{thm}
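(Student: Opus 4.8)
The plan is to present $\gram(f)$ as the intersection of an affine subspace with the cone $\sy^+\Rx_d$ of positive semidefinite tensors, to observe that the hypothesis $f\in\interior\Sigma_{n,2d}$ makes this intersection transversal, and then to use that the normal cone of such an intersection splits as a Minkowski sum. Concretely, write $W=\ker\mu$, so that $V=W^\perp$ in $\sy\Rx_d$ with respect to the apolarity scalar product and $\gram(f)=(\theta+W)\cap\sy^+\Rx_d$. Because $f$ lies in the interior of $\Sigma_{n,2d}$, it admits a Gram tensor of full rank $N=\dim\Rx_d$, which is positive definite; hence the affine subspace $\theta+W$ meets the interior of $\sy^+\Rx_d$. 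This is precisely the constraint qualification needed for
\[
\normal_{\gram(f)}(\theta)=\normal_{\theta+W}(\theta)+\normal_{\sy^+\Rx_d}(\theta)=V+\normal_{\sy^+\Rx_d}(\theta),
\]
where one uses that the normal cone of an affine subspace at any of its points is the orthogonal complement of its direction space, namely $W^\perp=V$.

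The second step is to compute $\normal_{\sy^+\Rx_d}(\theta)$ directly from the definition. A tensor $u$ belongs to it iff $\bil{u}{\psi-\theta}\ge 0$ for all $\psi\in\sy^+\Rx_d$; feeding in $\psi=0$ and $\psi=2\theta$ forces $\bil{u}{\theta}=0$, and letting $\psi$ range over psd rank one tensors shows that $u$ lies in the dual cone of $\sy^+\Rx_d$. The apolarity isomorphism $\Rx_d\xrightarrow{\,\sim\,}\Rx_d^\du$ sends $p\otimes p$ to a psd functional, hence identifies $\sy^+\Rx_d$ with its own dual, so $u\succeq 0$. Then $u\succeq 0$, $\theta\succeq 0$ and $\bil{u}{\theta}=0$ together force $\im(u)\subseteq\im(\theta)^\perp=U^\perp$. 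Thus $\normal_{\sy^+\Rx_d}(\theta)=\{u\succeq 0\colon\im(u)\subseteq U^\perp\}$, which is full-dimensional inside $\Sym(U^\perp\otimes U^\perp)$; in particular its linear span is $\Sym(U^\perp\otimes U^\perp)$.

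What remains is a dimension count. Combining the two steps, the linear span of $\normal_{\gram(f)}(\theta)$ is $\Sym(U^\perp\otimes U^\perp)+V$, so $\dim\normal_{\gram(f)}(\theta)=\dim\bigl(\Sym(U^\perp\otimes U^\perp)+V\bigr)$. The orthogonal decomposition $\Rx_d=U\oplus U^\perp$ induces the orthogonal decomposition $\sy\Rx_d=\Sym(U\otimes U)\oplus\Sym(U\otimes U^\perp)\oplus\Sym(U^\perp\otimes U^\perp)$, from which $\Sym(U^\perp\otimes U^\perp)^\perp=\Sym(U\otimes\Rx_d)$. Setting $P=\Sym(U^\perp\otimes U^\perp)$ and using $\dim(P+V)=\dim P+\dim V-\dim(P\cap V)$ together with $\dim(P\cap V)=\dim(P\cap W^\perp)=\dim P-\dim W+\dim(P^\perp\cap W)$, the term $\dim P$ cancels and $\dim V+\dim W=\dim\sy\Rx_d$, leaving
\[
\dim\normal_{\gram(f)}(\theta)=\dim\sy\Rx_d-\dim\bigl(\Sym(U\otimes\Rx_d)\cap W\bigr),
\]
which is the claimed formula since $W=\ker\mu$.

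The step I expect to be the main obstacle is the justification of the Minkowski‑sum splitting of the normal cone of the intersection: one must verify the relevant constraint qualification, i.e.\ that $\theta+W$ meets the interior of $\sy^+\Rx_d$, and this is exactly where the assumption $f\in\interior\Sigma_{n,2d}$ is used. A related point that has to be handled carefully is that everything is carried out with the apolarity scalar product rather than the trace form, so in particular the self‑duality of $\sy^+\Rx_d$ must be checked for this non‑standard product; the remaining bookkeeping through the $U\oplus U^\perp$ block decomposition is routine but must respect that same inner product.
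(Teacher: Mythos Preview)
The paper does not prove this theorem; it is quoted from \cite[Theorem 2.9]{st2015} and then used as a black box in the proof of the subsequent formula (\cref{thm:normal_cones}). So there is no ``paper's own proof'' to compare against.

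Your argument is nonetheless a correct, self-contained derivation. The three ingredients --- (i) the Minkowski decomposition $\normal_{\gram(f)}(\theta)=V+\normal_{\sy^+\Rx_d}(\theta)$ under the constraint qualification that $\theta+W$ meets $\interior(\sy^+\Rx_d)$, (ii) the identification $\normal_{\sy^+\Rx_d}(\theta)=\{u\succeq 0:\im u\subset U^\perp\}$ with linear span $\sy U^\perp$, and (iii) the dimension count via $(\sy U^\perp)^\perp=\Sym(U\otimes\Rx_d)$ --- are all valid, and you correctly flag that self-duality of $\sy^+\Rx_d$ and the orthogonal block decomposition must be verified for the apolarity product rather than the trace form (both hold, since $\bil{p\otimes p}{q\otimes q}=\bil{p}{q}^2$). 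This is in fact the standard route to the De Carli Silva--Tun\c{c}el result, specialized to Gram spectrahedra.
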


\begin{thm}
\label{thm:normal_cones}
Let $f\in \interior\Sigma_{n,2d}$ and $\theta\in\gram(f)$. Write $U\vcentcolon = \im(\theta)$ and $r\vcentcolon = \dim U$. Then
\[
\dim\normal_C(\theta)=\dim(\sy \Rx_d)-\crk(\dphi(U))+\binom{r}{2}.
\]
where $\dphi(U)\colon \Rx_d^{r}\to \Rx_{2d},\, (p_1,\dots,p_r)\mapsto 2\sum_{i=1}^r p_iq_i$ with $q_1,\dots,q_r$ a basis of $U$.
\end{thm}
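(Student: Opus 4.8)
The plan is to read the formula off from \cref{thm:st_normal_cones}, which already expresses
\[
\dim\normal_{\gram(f)}(\theta)=\dim(\sy{\Rx_d})-\dim\bigl(\ker\mu\cap\Sym(U\otimes\Rx_d)\bigr).
\]
Comparing this with the asserted formula, it suffices to establish the single identity
\[
\dim\bigl(\ker\mu\cap\Sym(U\otimes\Rx_d)\bigr)=\crk(\dphi(U))-\binom r2,
\]
and I would prove it by computing both sides in terms of the multiplication subspace $U\cdot\Rx_d:=\spn\{pq:p\in U,\ q\in\Rx_d\}\subset\Rx_{2d}$.

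For the left-hand side, I would apply rank--nullity to $\mu$ restricted to the subspace $\Sym(U\otimes\Rx_d)$: its kernel is exactly $\ker\mu\cap\Sym(U\otimes\Rx_d)$, and its image equals $U\cdot\Rx_d$, since $\mu\circ\Sym$ sends $p\otimes q$ to $pq$ (this is just the polarization of $p\otimes p\mapsto p^2$). To compute $\dim\Sym(U\otimes\Rx_d)$ I would use that $\ker\Sym$ is the space of antisymmetric tensors and check the elementary identity $(U\otimes\Rx_d)\cap\ker\Sym=\wedge^2 U$: extend a basis $q_1,\dots,q_r$ of $U$ to a basis $q_1,\dots,q_N$ of $\Rx_d$, write a general $t\in U\otimes\Rx_d$ as $\sum_{i\le r,\,j\le N}c_{ij}\,q_i\otimes q_j$, and observe that antisymmetry forces $c_{ij}=0$ whenever $j\le r<i$, so $t\in U\otimes U$ and hence $t\in\wedge^2 U$. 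This yields $\dim\Sym(U\otimes\Rx_d)=rN-\binom r2$ with $N=\dim\Rx_d$, hence $\dim(\ker\mu\cap\Sym(U\otimes\Rx_d))=rN-\binom r2-\dim(U\cdot\Rx_d)$. For the right-hand side, identifying $\Rx_d^r$ with $\Rx_d\otimes U$ via the basis $q_1,\dots,q_r$ turns $\dphi(U)$ into twice the multiplication map, so $\im\dphi(U)=U\cdot\Rx_d$ and $\crk(\dphi(U))=rN-\dim(U\cdot\Rx_d)$. Subtracting, the terms $rN$ and $\dim(U\cdot\Rx_d)$ cancel, leaving exactly the desired identity; substituting back into \cref{thm:st_normal_cones} then gives the theorem.

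I do not anticipate a real obstacle: the only step needing genuine care is the lemma $(U\otimes\Rx_d)\cap\ker\Sym=\wedge^2 U$ --- equivalently that $\Sym$ restricted to $U\otimes\Rx_d$ has a kernel of dimension exactly $\binom r2$ --- together with bookkeeping of the normalization of $\Sym$ and of the harmless factor $2$ in $\dphi(U)$ so that $\mu\circ\Sym$ really is the multiplication map. As a sanity check, in the full-rank case $U=\Rx_d$, $r=N$ one has $U\cdot\Rx_d=\Rx_{2d}$ and the formula collapses to $\dim\normal=\dim\Rx_{2d}$, the codimension of $W=\ker\mu$, which is what one expects at a point in the relative interior of $\gram(f)$.
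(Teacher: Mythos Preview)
Your proposal is correct and follows essentially the same route as the paper: both start from \cref{thm:st_normal_cones}, reduce to the identity $\dim(\ker\mu\cap\Sym(U\otimes\Rx_d))=\crk(\dphi(U))-\binom r2$, and prove it by applying rank--nullity to $\mu$ on $\Sym(U\otimes\Rx_d)$ with image $U\cdot\Rx_d$. The only (cosmetic) difference is in how $\dim\Sym(U\otimes\Rx_d)$ is computed: the paper uses the quotient description $\Sym(U\otimes\Rx_d)\cong\sy{\Rx_d}/\sy{(U^\perp)}$ to get $\binom{N+1}{2}-\binom{N-r+1}{2}$ and then simplifies, whereas you compute $\ker(\Sym|_{U\otimes\Rx_d})=\wedge^2 U$ directly to obtain $rN-\binom r2$; these are of course the same number.

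One small correction: in your basis argument for $(U\otimes\Rx_d)\cap\ker\Sym=\wedge^2 U$ you write that antisymmetry forces $c_{ij}=0$ whenever $j\le r<i$, but since your sum already runs over $i\le r$ this condition is vacuous. What antisymmetry actually forces (by comparing the coefficient of $q_j\otimes q_i$ on both sides of $\sigma(t)=-t$) is $c_{ij}=0$ whenever $i\le r<j$, which is exactly what you need to conclude $t\in U\otimes U$.
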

\begin{proof}
We write $V\vcentcolon \Rx_{2d}$. Using \cref{thm:st_normal_cones} we need to show
\[
\dim(\ker\mu\,\cap\, \Sym(U\otimes V))=\crk(\dphi(U))-\binom{r}{2}.
\]
Since $UV$ is the image of $\dphi(U)$, we have 
\[
\dim UV=r\cdot\dim V-\crk(\dphi(U)). 
\]
The dimension of $\Sym(U\otimes V)$ is given by $\binom{\dim V+1}{2}-\binom{\dim V-r+1}{2}$, since $\Sym(U\otimes V)\cong \sy{V}/\sy{(U^\perp)}$.
The map $\mu\vert_{\Sym(U\otimes V)}\colon \Sym(U\otimes V)\to UV$ is surjective, hence we calculate
\begin{align*}
\dim(\ker\mu\,\cap\,& \Sym(U\otimes V))=\\
&=\dim \Sym(U\otimes V)-\dim UV\\
&=\binom{\dim V+1}{2}-\binom{\dim V-r+1}{2}-r\dim V+\crk(\dphi(U))\\
&=\crk(\dphi(U))-\binom{r}{2}.
\end{align*}
\end{proof}

This gives the dimension of all normal cones we need in the next sections.

\begin{rem}
The dimension of the normal cone only depends on $\im\theta$ and not on $\theta$ itself.
\end{rem}

\section{Binary sextics}
\label{sec:binary_sextics}

We consider the case $n=2,\, d=3$ of binary sextics. The \gs\ of a general psd binary sextic is a 3-dimensional convex set. This is the first case where the \gs\ is not a polytope. Indeed, if $d=1$, i.e. we consider binary quadratic forms, the \gs\ is a single point. If $d=2$ the \gs\ is a line segment, in particular it is a polytope. Since the fiber body has the same dimension, it is then also a line segment itself.
The situation become much more interesting in the next case $d=3$. We show that the boundary of the fiber body consists only of extreme points and exactly one vertex, i.e. an extreme point with a full-dimensional normal cone. This is very unexpected as on the one hand almost all \gsa\ of psd binary sextics have exactly four vertices and on the other hand by \cref{thm:normal_cone_fb} we expect the boundary to be as smooth as possible and every point having a 1-dimensional normal cone.

The general situation regarding \gsa\ in this case is the following which can be found in \cite{Scheiderer18}. The dimension of the normal cones follows from \cref{thm:normal_cones}.

\begin{thm}
Let $f\in\interior\Sigma_{2,6}$. The Gram spectrahedron $\gram(f)\subset \sy^+{\Rx_3}$ has dimension 3. If $f$ is chosen with distinct zeros, there are exactly 4 rank 2 Gram tensors on the boundary of $\gram(f)$, each with a 3-dimensional normal cone. The rest of the boundary consists of rank 3 Gram tensors which are extreme points and have 1-dimensional normal cones.

For special choices of $f$ the number of rank 2 Gram tensors can also be lower than 4.
\end{thm}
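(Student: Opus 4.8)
The plan is to split the statement into the facial data of $\gram(f)$ --- its dimension, the rank stratification of the boundary, and the number of rank $2$ tensors, which goes back to \cite{Scheiderer18} and which I sketch below --- and the dimensions of the normal cones, which I would read off \cref{thm:normal_cones}. First the bookkeeping: $N=\dim\Rx_3=4$, so $\sy\Rx_3$ has dimension $10$, $\Rx_6$ has dimension $7$, and $W=\ker\mu$ has dimension $3$. Since $f\in\interior\Sigma_{2,6}$, it has a Gram tensor $\theta_0$ with full image $\Rx_3$, i.e. positive definite; such a $\theta_0$ is an interior point of $\gram(f)$ in the translate $\theta_0+W$, so $\dim\gram(f)=3$. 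Because every positive semidefinite binary sextic is a sum of squares, $\interior\Sigma_{2,6}$ is the set of strictly positive forms; such an $f$ has no real zero, hence is not the square of a cubic, so every Gram tensor of $f$ has rank $\ge 2$ and the boundary of $\gram(f)$ splits into tensors of rank $2$ and rank $3$.

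For the count of rank $2$ tensors I would work over $\C$. A rank $2$ Gram tensor equals $p\otimes p+q\otimes q$ with $p,q$ spanning a plane $U\subset\Rx_3$ and $f=p^2+q^2=(p+iq)(p-iq)$, and conversely a factorization $f=g\bar g$ with $g\in\C[x_1,x_2]_3$ yields such a tensor with $U\otimes\C=\C g\oplus\C\bar g$. Since $f$ is real and strictly positive its zero divisor over $\C$ is a sum of three conjugate pairs of points, pairwise distinct when $f$ has distinct zeros; a cubic $g$ with $g\bar g=f$ is obtained by choosing, for each pair, one of its two members, which gives $2^3=8$ choices, identified in pairs by $g\leftrightarrow\bar g$ (this only flips the sign of $q$), hence $4$ rank $2$ tensors --- each genuinely of rank $2$ since $g$ is non-real, and pairwise distinct since the planes $U$ are. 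This both produces the four tensors and shows there are no others. For the final assertion I would record the witness $f=(x_1^2+x_2^2)^3=\ell^3\bar\ell^3$ with $\ell=x_1-ix_2$: the cubic divisors of $f$ are $g=\ell^a\bar\ell^{3-a}$ ($0\le a\le 3$), and after identifying $g$ with $\bar g$ and with its unit multiples (which only rotate $(p,q)$ and so give the same tensor) only $\{\ell^3,\bar\ell^3\}$ and $\{\ell^2\bar\ell,\ell\bar\ell^2\}$ survive, so this Gram spectrahedron has exactly $2$ rank $2$ tensors.

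For the normal cones, note that $\gram(f)$ spans the $3$-dimensional affine space $\theta_0+W$ inside $\sy\Rx_3$, so for every $\theta\in\gram(f)$ one has $\normal_{\sy\Rx_3}(\theta)=W^\perp\oplus\normal_W(\theta)$ with $\dim W^\perp=7$; thus the intrinsic dimension of the normal cone is $\dim\normal_{\sy\Rx_3}(\theta)-7$, and $\dim\normal_{\sy\Rx_3}(\theta)$ is what \cref{thm:normal_cones} computes. Write $U=\im\theta$, $r=\dim U$. If $r=2$ and $U=\langle p,q\rangle$, then $p,q$ are coprime --- a common complex zero would be a double zero of $f=(p+iq)(p-iq)$ --- so $\Rx_3\cdot U=\Rx_3\,p+\Rx_3\,q$ has dimension $4+4-1=7=\dim\Rx_6$, its intersection being the line $\R\,pq$; hence $\crk(\dphi(U))=8-7=1$ and \cref{thm:normal_cones} gives $\dim\normal_{\sy\Rx_3}(\theta)=10-1+\binom{2}{2}=10$, i.e. intrinsic dimension $3$ --- a full-dimensional normal cone in a $3$-dimensional body, so $\theta$ is a vertex. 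If $r=3$, then $U$ has no common linear factor (a common real one would make $f$ vanish at a real point; a common non-real one, with its conjugate, would force $\dim_\C(U\otimes\C)\le 2$), hence $U$ contains a coprime pair (a $\C$-vector space is not a finite union of proper subspaces), so $\Rx_3\cdot U=\Rx_6$, $\crk(\dphi(U))=12-7=5$, and \cref{thm:normal_cones} gives $\dim\normal_{\sy\Rx_3}(\theta)=10-5+\binom{3}{2}=8$, i.e. intrinsic dimension $1$. That such a $\theta$ is extreme I would obtain from \cref{prop:dimension_face_spec}: its supporting face $\{G\in\gram(f):\im G\subset U\}$ has dimension $\dim\sy U-\dim U^2=6-\dim U^2$, and a quadratic relation among a basis of $U$ would, on analysing the rank of that relation as a quadratic form over $\C$, force a common factor of $U$ --- excluded --- so $\dim U^2=6$ and the face is a point.

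The routine parts are the two corank computations and the dimension counting. The main obstacle --- and why I would still cite \cite{Scheiderer18} rather than reprove everything in detail --- is controlling the boundary combinatorics of $\gram(f)$: that the four factorization tensors are precisely the rank $2$ points and that no rank $3$ point lies in a positive-dimensional face. If one wants this fully self-contained, the one delicate step is the case analysis over $\C$ ruling out a quadratic relation among a basis of the image of a rank $3$ Gram tensor of a strictly positive binary sextic.
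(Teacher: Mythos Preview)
Your proposal is correct and matches the paper's treatment: the paper does not actually prove this theorem but cites \cite{Scheiderer18} for the facial structure and states that the normal-cone dimensions follow from \cref{thm:normal_cones}. Your sketch makes both ingredients explicit—the factorization count for the rank~$2$ tensors, the no-common-factor argument for rank~$3$ extreme points, and the two corank computations plugged into \cref{thm:normal_cones}—so it is a faithful (and more detailed) expansion of exactly what the paper leaves to the reference.
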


In this section we show the following.

\begin{thm}
\label{thm:binary_sextics_fb}
The fiber body $\Sigma_\mu K$ has dimension 3 and every point on its boundary is an extreme point. There is exactly one point $\Theta$ that has a 3-dimensional normal cone, every other point has a 1-dimensional normal cone. This point $\Theta$ corresponds to the (essentially unique) section $\gamma$ with $\gamma(f)=\theta$ where $\theta$ is the rank 2 point corresponding to the factorization of $f$ where the zeros of one factor all have positive imaginary part (and the zeros of the other all have negative imaginary part).
Moreover, the polynomial inequalities defining the normal cone are known, especially the 3-dimensional normal cone is \sa.
\end{thm}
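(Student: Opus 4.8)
The plan is to combine the general machinery from Section~\ref{sec:fb_spec} with the explicit description of $\gram(f)$ for binary sextics given in \cite{Scheiderer18}. The dimension claim is immediate: since $\mu$ is surjective and $\ker\mu\cap\Sym^N_+=0$, the fiber body $\Sigma_\mu K$ lives in $W=\ker\mu$, and for binary sextics $\dim W=\binom{5}{2}-\dim\R[x,y]_6=10-7=3$; fullness follows because for a generic $f$ the fiber $\gram(f)$ already spans a $3$-dimensional affine subspace of $W$, so by \cref{thm:dimension_face_fb} applied to $w=0$ the whole body is $3$-dimensional.

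For the boundary structure I would fix a direction $0\neq w\in W$ and ask what $\gram(f)^w$ looks like for almost all $f\in\interior\Sigma_{2,6}$. The key input is that a generic $\gram(f)$ has a $1$-dimensional normal cone at each of its (extreme, rank~3) boundary points and $3$-dimensional normal cones only at the four rank~2 points. Thus for a \emph{generic} $w\in W$, the set of $f$ for which $w$ lands in one of the four $3$-dimensional normal cones is a lower-dimensional (thin) subset, so for almost all $f$ the face $\gram(f)^w$ is a rank~3 extreme point. By \cref{prop:fb_extreme_points} this forces $(\Sigma_\mu K)^w$ to be an extreme point, and by the remark following \cref{thm:normal_cone_fb} the normal cone at such a point is at most $1$-dimensional, hence exactly $1$-dimensional (it is nonzero since it exposes a proper face). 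This handles "generic $w$''; to conclude that \emph{every} boundary point is extreme one argues that the exceptional directions $w$ — those for which $w$ lies in a rank~2 normal cone of $\gram(f)$ for a positive-measure set of $f$ — can be analyzed directly: tracking the four rank~2 points as $f$ varies (they are given by the four ways of splitting the six complex roots of $f$ into two conjugate-free triples, or rather the splitting into roots with positive/negative imaginary part and the three "mixed'' splittings), one sees that exactly one of these splittings varies continuously and positive-definitely over all of $\interior\Sigma_{2,6}$, namely the positive-imaginary-part/negative-imaginary-part split, while the other three degenerate (the relevant rank~2 tensor ceases to be psd, or two of them collide) on a wall of positive codimension. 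Hence only that one splitting yields a $w$-direction contained in a positive-measure family of rank~2 normal cones, and by \cref{thm:normal_cone_fb} the intersection $\bigcap_{f\in\oo}\normal_{\gram(f)}(\theta_f)$ over that family is a full-dimensional cone; this produces the single vertex $\Theta$.

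The explicit section: for this distinguished splitting $f=\bar p\cdot p$ where $p$ has all roots in the upper half-plane (so $\bar p$ in the lower), the rank~2 Gram tensor $\theta_f$ is $\tfrac12(p\otimes p+\bar p\otimes\bar p)$ up to normalization, and it depends continuously (indeed algebraically, on the chamber where the roots stay off the real axis, which is dense) on $f$; then $\Theta=\int_{\pi(K)}\theta_f\,\d f$ is the corresponding point, and uniqueness of the section almost everywhere is the last sentence of \cref{prop:fb_extreme_points}. Finally, to see the normal cone of $\Theta$ is semialgebraic and to write down its defining inequalities, I would invoke \cref{prop:continuous_points_nc}: the rank~2 point $\theta_f$ satisfies the transversality and continuity hypotheses on the dense chamber, so its normal cone in $\gram(f)$ varies continuously, and by \cref{thm:normal_cone_fb} the normal cone of $\Theta$ is the closure of $\bigcap_f \normal_{\gram(f)}(\theta_f)$; computing this intersection amounts to a concrete (if lengthy) elimination using the dual variety of the lowest-degree part of the boundary polynomial of $\gram(f)$ at $\theta_f$, which is semialgebraic by Tarski–Seidenberg and whose inequalities can be made explicit.

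The main obstacle I expect is the "every boundary point is extreme'' claim, i.e.\ ruling out that some non-generic direction $w$ produces a positive-dimensional face of $\Sigma_\mu K$ — this requires the careful bookkeeping of how the four rank~2 points (and their normal cones) behave across the whole of $\interior\Sigma_{2,6}$, including along the walls where $f$ acquires a real root or a multiple root, and showing that outside the one distinguished splitting the relevant set of $f$ always has measure zero. The computation of the explicit semialgebraic description of the $3$-dimensional normal cone is routine in principle but will be the most calculation-heavy part.
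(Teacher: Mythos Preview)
Your proposal misidentifies the hard part and misses the paper's central technical idea.

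The claim that every boundary point of $\Sigma_\mu K$ is extreme is \emph{not} an obstacle: since every boundary point of $\gram(f)$ (for $f$ with distinct zeros, hence for almost all $f$) is already an extreme point, \cref{prop:fb_extreme_points} gives this immediately for every direction $w\neq 0$, with no case analysis on $w$ required.

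The substantive work is the normal-cone dichotomy, and here your approach has a genuine gap. You propose to track the four rank~$2$ points and argue that three of them ``degenerate on walls of positive codimension'' while the distinguished one persists. But this is not what happens: all four rank~$2$ Gram tensors exist and vary continuously for every $f$ with distinct zeros. The relevant question is not whether the points degenerate but whether their normal cones retain a common full-dimensional intersection as $f$ ranges over all of $K'$, and you give no mechanism for deciding this. Your ``generic $w$ lands at rank~$3$ for almost all $f$'' claim is likewise unjustified --- indeed for $w$ in a full-dimensional cone of directions it is false for \emph{every} $f$.

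The paper proceeds quite differently. It asks, for a given $w\in W$, whether $w$ can be completed to a psd rank~$1$ tensor, i.e.\ whether $w=\pr_W(q\otimes q)$ for some $q\in\R[x,y]_3$. A direct computation (\cref{lem:binary_sextics_rk_1_completion}) shows this is possible precisely when $w$ lies outside the explicit cone $S=\{\lambda_2^2\le 4\lambda_1\lambda_3,\ \lambda_1,\lambda_3\ge 0\}$. For $w\notin S$ the completion furnishes an open set of $f$ on which $\gram(f)^w$ is a rank~$3$ extreme point with one-dimensional normal cone, and \cref{thm:normal_cone_fb} then forces the fiber-body normal cone to be one-dimensional (\cref{prop:binary_sextics_1_dim_nc}). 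For $w\in S$ no rank~$1$ completion exists, so $\gram(f)^w$ is necessarily one of the four rank~$2$ points for every $f$; a single explicit check (\cref{lem:binary_sextics_ex_non_intersection}) exhibits two sextics whose six ``mixed'' rank~$2$ normal cones are pairwise disjoint, and together with connectedness of $S$ and of $K'$ this pins down $\gram(f)^w=\theta_f$ for all $f$ (\cref{thm:binary_sextics_3_dim_nc}). The semialgebraic description of the normal cone at $\Theta$ is then exactly $S$, obtained directly from the rank~$1$ completion analysis rather than from an elimination argument.

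The missing idea is thus the rank~$1$ completion approach of \cref{rem:completion}: it simultaneously yields the explicit cone $S$, the one-dimensional normal cones outside $S$, and (by its failure inside $S$) the fact that the face there is always rank~$2$. Your degeneration picture supplies none of these.
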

\begin{proof}
From \cref{prop:fb_extreme_points} we immediately see that every point on the boundary is an extreme point since this is true for the fibers. The rest will follow from \cref{thm:binary_sextics_3_dim_nc} and \cref{prop:binary_sextics_1_dim_nc}.
\end{proof}

\begin{rem}
The fact that there is a 3-dimensional normal cone means that for any direction/linear functional in the normal cone, the optimal solution when minimizing this linear functional over \textit{any} \gs\ of a psd binary sextic with distinct zeros, will be a rank 2 Gram tensor.
\end{rem}

\begin{rem}
In this case all faces of the fiber body are exposed since every exposed face is an extreme point.
\end{rem}

For the rest of this section the goal is to show the statement about the normal cones.
As a basis for $\Rx_3$ we choose $x^3,x^2y,xy^2,y^3$ in this order. The matrices
\[
R_1=\left(\begin{array}{rrrr}
0 & 0 & 1 & 0 \\
0 & \minus 2 & 0 & 0 \\
1 & 0 & 0 & 0 \\
0 & 0 & 0 & 0
\end{array}\right),
R_2=\left(\begin{array}{rrrr}
0 & 0 & 0 & 1 \\
0 & 0 & \minus 1 & 0 \\
0 & \minus 1 & 0 & 0 \\
1 & 0 & 0 & 0
\end{array}\right),
R_3=\left(\begin{array}{rrrr}
0 & 0 & 0 & 0 \\
0 & 0 & 0 & 1 \\
0 & 0 & \minus 2 & 0 \\
0 & 1 & 0 & 0
\end{array}\right)
\]
then form an orthogonal basis of $W=\ker\mu$ with respect to the trace scalar product. We identify $W$ and $V=W^\du$ via this scalar product.

We denote by $K'$ the set of all positive definite binary sextics with distinct zeros.
Let $f\in K'$. Any factorization $f=g\ol g$ gives rise to a Gram tensor of $f$ corresponding to the sos representation $f=\left(\frac{1}{2}(g+\ol g)\right)^2+\left(\frac{1}{2i}(g-\ol g)\right)^2$. Let $\theta_f$ be the Gram tensor corresponding to the factorization where all zeros of $g$ have positive imaginary part, and let $\theta_2,\theta_3,\theta_4$ be the other 3 Gram tensors of rank 2.

\begin{lem}
\label{lem:binary_sextics_ex_non_intersection}
Consider the two psd binary sextics with zeros
\begin{align*}
&\{1+6i, 1-6i, 2+5i, 2-5i, 3+4i, 3-4i\},\\
&\{-4+2i, -4-2i, -2+2i, -2-2i, -1+2i, -1-2i\}.
\end{align*}
The six normal cones at the points $\theta_2,\theta_3,\theta_4$ do not intersect.
\end{lem}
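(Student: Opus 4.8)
The plan is to turn the statement into a finite exact computation with six explicit quadratic cones in $\R^{3}\cong W$. First I would record the six rank~$2$ Gram tensors in question. For a positive definite binary sextic $f$ whose roots split into conjugate pairs, each way of choosing one root from each pair yields a complex cubic $g$ with $f=g\overline{g}$, and the corresponding rank~$2$ Gram tensor is the Gram matrix of $f=\bigl(\tfrac12(g+\overline g)\bigr)^{2}+\bigl(\tfrac1{2i}(g-\overline g)\bigr)^{2}$, namely $\theta=pp^{\top}+qq^{\top}$ with $p=\operatorname{Re} g$ and $q=\operatorname{Im} g$ written in the basis $x^{3},x^{2}y,xy^{2},y^{3}$. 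For each of the two prescribed sextics this gives $2^{3}/2=4$ tensors; discarding the distinguished one $\theta_{f}$ (roots of $g$ all in the upper half plane) leaves $\theta_{2},\theta_{3},\theta_{4}$, so six tensors altogether, all with integer entries since the prescribed roots are Gaussian integers.

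Second I would compute the six normal cones explicitly. Fixing a base Gram matrix $G_{0}$ of $f$, every Gram tensor of $f$ is $G_{0}+s_{1}R_{1}+s_{2}R_{2}+s_{3}R_{3}$ for a unique $s=(s_{1},s_{2},s_{3})\in\R^{3}$, and for a rank~$2$ tensor $\theta$ with $\ker\theta=\operatorname{span}(v_{1},v_{2})$ the tangent cone of $\gram(f)$ at $\theta$ is $T_{\theta}=\{s\colon v^{\top}(\textstyle\sum_{j}s_{j}R_{j})v\ge 0\ \text{for all }v\in\ker\theta\}=\{s\colon A_{\theta}(s)\succeq 0\}$, where $A_{\theta}(s)$ is the $2\times 2$ symmetric pencil with entries $v_{k}^{\top}(\sum_{j}s_{j}R_{j})v_{l}$. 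Being the psd locus of a $2\times 2$ linear pencil, $T_{\theta}$ is a pointed quadric cone cut out by the indefinite quadratic inequality $\det A_{\theta}(s)\ge 0$ together with the linear inequality $\operatorname{tr}A_{\theta}(s)\ge 0$ selecting the right nappe; dualising (with respect to the trace pairing, under which $V$ is identified with $W$ via the $R_{j}$) gives $\normal_{\gram(f)}(\theta)=T_{\theta}^{\vee}$, again an explicit pointed quadric cone $\{u\colon u^{\top}Q_{\theta}u\le 0,\ \langle\ell_{\theta},u\rangle\ge 0\}$ with $Q_{\theta}$ proportional to $\adj$ of the matrix of $\det A_{\theta}$. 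Running this for the six tensors produces six rational symmetric $3\times 3$ matrices $Q_{1},\dots,Q_{6}$ and linear forms $\ell_{1},\dots,\ell_{6}$.

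Third, to see that the six cones meet only at the origin, note that $\normal_{\gram(f)}(\theta)\subseteq\{u\colon u^{\top}Q_{\theta}u\le 0\}$, so it is enough to exhibit nonnegative multipliers $\mu_{1},\dots,\mu_{6}$ with $\sum_{k}\mu_{k}Q_{k}\succ 0$: then any common point $u$ satisfies $0\ge\sum_{k}\mu_{k}\,u^{\top}Q_{k}u=u^{\top}\bigl(\sum_{k}\mu_{k}Q_{k}\bigr)u$, forcing $u=0$. This is a tiny semidefinite/linear feasibility question with rational data, hence decidable with an exact, checkable certificate; I expect the two sextics in the statement to be chosen precisely so that such a certificate exists, and the same kind of certificate ($\mu_{i}Q_{i}+\mu_{j}Q_{j}\succ 0$) handles the individual pairs consisting of one cone from each sextic, which is the form used when ruling out a second full-dimensional normal cone of the fiber body.

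The main obstacle is bookkeeping rather than mathematics: keeping straight the trace versus apolarity pairings and the identification of $W$ with its dual, and in particular getting the nappe-selecting sign right when passing from $T_{\theta}$ to $T_{\theta}^{\vee}$, so that the six matrices $Q_{k}$ come out correct. After that the positivity certificate $\sum_{k}\mu_{k}Q_{k}\succ 0$ is a short finite verification, ideally carried out (and double-checked) symbolically since all the data are rational.
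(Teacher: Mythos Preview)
Your plan matches the paper's proof closely. The paper also computes, for each of the six rank-$2$ tensors $\theta_i$, the quadratic form governing the normal cone by taking the degree-$2$ part of $\det(\theta_i+\sum_j\lambda_jR_j)$ and inverting the resulting $3\times3$ symmetric matrix---this is your tangent-cone-then-dualise step in different language. It then records the six $3\times3$ matrices explicitly (all of signature $(1,2)$) and checks in \texttt{Mathematica} that no non-zero vector lies in two of the associated quadric cones.

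Two refinements are worth noting. First, the lemma asserts \emph{pairwise} disjointness, and the paper actually verifies the stronger statement that the six \emph{double-sided} cones $\{u:u^\top Q_k u\ge 0\}$ are pairwise disjoint; so your concern about selecting the correct nappe is moot and the linear forms $\ell_k$ can be dropped. Second, your global certificate $\sum_k\mu_kQ_k\succ 0$ only rules out a point common to all six cones; for the lemma as stated you need the fifteen pairwise certificates (which you already mention for the cross-sextic pairs). With the paper's sign convention the relevant inequality is $\mu_iQ_i+\mu_jQ_j\prec 0$, and these certificates do exist for the given data---for example the plain sum of the first two listed matrices is already negative definite---so your approach goes through and provides a more hand-checkable alternative to the paper's \texttt{Mathematica} verification.
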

\begin{proof}
For both binary forms we calculate the three rank 2 Gram matrices $\theta_2,\theta_3,\theta_4$ together with the three degree 2 parts of the form $\theta_i+\sum_{i=1}^3 \lambda_i R_i\in\R[\lambda_1,\lambda_2,\lambda_3]$ (this is the Taylor expansion of the algebraic boundary of the \gs\ around $\theta_i$). The normal cone is then given by inverting the symmetric matrix obtained from these quadratic forms. We then check with \texttt{Mathematica} that there is no non-zero point contained in two different normal cones. Note that it is not necessary to determine which side of the double-sided cones defined by the quadratic forms is in fact the normal cone. The statement that two different cones do not intersect even holds for the double-sided cones. For completeness sake we include the symmetric matrices defining the normal cones. For the first sextic they are 
\[
\left(\begin{array}{rrr}
-5 & 11 & 101 \\
11 & -149 & 277 \\
101 & 277 & -4017
\end{array}\right),
\left(\begin{array}{rrr}
-3 & 1 & 115 \\
1 & -107 & 135 \\
115 & 135 & -4567
\end{array}\right),
\left(\begin{array}{rrr}
-7 & 19 & 55 \\
19 & -163 & 545 \\
55 & 545 & -4683
\end{array}\right)
\]
for the second one we get
\[
\left(\begin{array}{rrr}
-1 & -1 & 3 \\
-1 & -2 & 7 \\
3 & 7 & 20
\end{array}\right),
\left(\begin{array}{rrr}
-1 & -2 & 0 \\
-2 & -10 & -25 \\
0 & -25 & -100
\end{array}\right),
\left(\begin{array}{rrr}
-1 & -4 & -12 \\
-4 & -14 & -47 \\
-12 & -47 & -220
\end{array}\right).
\]
\end{proof}

\begin{thm}
\label{thm:binary_sextics_3_dim_nc}
The normal cone of $\Theta$ is given as
\[
S:=\{(\lambda_1,\lambda_2,\lambda_3)\colon\lambda_2^2\le 4\lambda_1\lambda_3,\, 0\le \lambda_1,\lambda_3\}
\]
in the basis $R_1,R_2,R_3$.
\end{thm}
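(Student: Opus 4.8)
The plan is to deduce the theorem from \cref{thm:normal_cone_fb}. Since $\Theta$ is the extreme point attached to the essentially unique section $\gamma(f)=\theta_f$, one first needs that for $w$ in the interior of $S$ (read inside $W$ via $R_1,R_2,R_3$) one has $(\Sigma_\mu K)^w=\{\Theta\}$. By \cref{prop:fb_extreme_points} this is equivalent to $\gram(f)^w=\{\theta_f\}$ for almost every $f$, hence to $w\in\interior(N_f)$ for almost every $f$, where $N_f:=\normal_{\gram(f)}(\theta_f)$; this follows once we know $S\subset N_f$ for almost every $f$, since two full-dimensional cones one inside the other have nested interiors. So \cref{thm:normal_cone_fb} reduces the theorem to proving: \textbf{(A)} $S\subset N_f$ for almost every $f\in K'$; and \textbf{(B)} for every $w\notin S$ the set $\{f\in K'\colon w\notin N_f\}$ has positive measure. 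Indeed (A) yields $S\subset\bigcap_{f\in\oo}N_f\subset\normal_{\Sigma_\mu K}(\Theta)$ for a suitable $\oo$ with $\oo^c$ of measure zero, while (B) forces $\bigcap_{f\in\oo}N_f\subset S$ for every such $\oo$, so $\overline{\bigcup_\oo\bigcap_{f\in\oo}N_f}\subset S$ because $S$ is closed; the two inclusions give equality.

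To describe $N_f$, observe that $\theta_f$ is a rank-$2$ vertex of $\gram(f)$: with $U_f:=\im(\theta_f)$ one has $f\in\interior\Sigma U_f^2$ and $\dim\sy{U_f}-\dim U_f^2=3-3=0$, so by \cref{prop:dimension_face_spec} the face $\{G\in\gram(f)\colon\im G\subset U_f\}$ equals $\{\theta_f\}$. Consequently the tangent cone of $\gram(f)$ at $\theta_f$ is $\{H\in W\colon P_{\mathcal{K}_f}HP_{\mathcal{K}_f}\succeq 0\}$, where $\mathcal{K}_f:=U_f^\perp$ is the kernel of $\theta_f$ and $P_{\mathcal{K}_f}$ the orthogonal projection onto it: a perturbation $\theta_f+\epsilon H$ with $H\in W$ stays positive semidefinite for small $\epsilon>0$ exactly under this condition (Schur complement), and it stays a Gram tensor of $f$ since $H\in\ker\mu$. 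Dualising inside $W$ gives $N_f=\pr_W(F_{\mathcal{K}_f})$, the image under the projection onto $W$ along $V$ of the face $F_{\mathcal{K}_f}=\{M\in\Sym^4_+\colon\im M\subset\mathcal{K}_f\}$; equivalently $N_f$ is the set of $w\in W$ that complete along $V$ to a positive semidefinite matrix whose kernel contains $U_f$, in the spirit of \cref{rem:completion}. In particular each $N_f$ is a $3$-dimensional circular cone, and its algebraic boundary is the dual conic of a non-degenerate ternary quadratic form $Q_f$ (the lowest-order Taylor coefficient of the boundary polynomial of $\gram(f)$ at $\theta_f$), as in \cref{prop:continuous_points_nc} and \cref{thm:normal_cones}.

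For \textbf{(A)} I would parametrise $\theta_f=p_fp_f^T+q_fq_f^T$, where $g_f=p_f+i\,q_f$ is the cubic all of whose (distinct) roots have positive imaginary part, so that $\mathcal{K}_f=p_f^\perp\cap q_f^\perp$, and identify $S$ with $\Sym^2_+$ via $(\lambda_1,\lambda_2,\lambda_3)\mapsto\big(\begin{smallmatrix}\lambda_1&\lambda_2/2\\\lambda_2/2&\lambda_3\end{smallmatrix}\big)$. Then $S\subset N_f$ says that every positive semidefinite $2\times2$ matrix is the $\pr_W$-image of an element of $F_{\mathcal{K}_f}$. As $\pr_W$ restricted to the $3$-dimensional space $\Sym(\mathcal{K}_f)$ of symmetric forms supported on $\mathcal{K}_f$ is generically a linear isomorphism onto $W$, writing $\Phi_f\colon W\to\Sym(\mathcal{K}_f)$ for its inverse reduces $S\subset N_f$ to $\Phi_f(S)\subset\Sym^2_+$, and since $S$ is generated by its extreme rays $(s^2,2st,t^2)$ further to $\Phi_f(s^2,2st,t^2)\succeq 0$ for all $s,t\in\R$ — a one-parameter family of $2\times2$ positive-semidefiniteness conditions. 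The real content is that this holds \emph{uniformly} in $f$; this is where the distinguished choice of $\theta_f$ enters, since for $g_f$ with all roots in the upper half-plane $g_f/\ol{g_f}$ is a degree-$3$ Blaschke product on $\P^1(\R)$, and this forces the compressions $\Phi_f(s^2,2st,t^2)$ to be positive for all $(s,t)$. For the other rank-$2$ points $\theta_2,\theta_3,\theta_4$ there is no such $f$-independent cone contained in every normal cone, as \cref{lem:binary_sextics_ex_non_intersection} already indicates.

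For \textbf{(B)} I would combine this explicit family with continuity. By \cref{prop:continuous_points_nc} — whose hypotheses hold at the rank-$2$ points of $\gram(f)$ for $f$ in a dense open subset of $K'$, there being then exactly four such points, no point of rank $<2$, the required transversality, and a non-degenerate (hence irreducible) $Q_f$ of constant degree $2$ — the cone $N_f$ varies continuously with $f$. So it suffices to produce, for each $w\notin S$, one $f_0\in K'$ with $w\notin N_{f_0}$: then $w$ stays in the open complement of $N_f$ for all $f$ in a neighbourhood of $f_0$, a set of positive measure. Concretely, as the roots of $g_f$ are pushed towards the real axis or made to collide — i.e.\ as $f$ tends to $\partial K'$ — the $2$-plane $\mathcal{K}_f$ degenerates and the circular cone $N_f$ shrinks onto $S$ (its boundary conic tending to $\lambda_2^2=4\lambda_1\lambda_3$), so any $w$ with $\lambda_1(w)<0$, or $\lambda_3(w)<0$, or $\lambda_2(w)^2>4\lambda_1(w)\lambda_3(w)$ while $\lambda_1(w),\lambda_3(w)\ge 0$, is cut off by some $N_{f_0}$. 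With \cref{thm:normal_cone_fb} this gives $\normal_{\Sigma_\mu K}(\Theta)=S$, which is visibly \sa. The main obstacle is (A): isolating $\theta_f$ and $\mathcal{K}_f$ explicitly enough to see that the \emph{fixed} cone $S$ lies in \emph{every} $N_f$ — that the normal cone of the distinguished rank-$2$ point is uniformly at least as large as $S$ whichever positive semidefinite sextic is being integrated — after which the continuity argument for (B) is routine.
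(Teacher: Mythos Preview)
Your reduction to (A) and (B) via \cref{thm:normal_cone_fb} is exactly right, and your description $N_f=\pr_W(F_{\mathcal K_f})$ is correct. But both (A) and (B) are left as gaps, and in each case the paper takes a different and shorter route.

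For (B) you argue by degeneration, claiming $N_f$ shrinks onto $S$ as $f\to\partial K'$; this is plausible but you never establish the limit, and doing so is not easier than (A). The paper instead proves the following characterisation (\cref{lem:binary_sextics_rk_1_completion}): $w\in W\setminus S$ \emph{precisely} when $w=\pr_W(q\otimes q)$ for some $q\in\Rx_3$. This settles (B) in one stroke: for such $w$, every $f\in\interior\Sigma(\spn(q)^\perp)^2$ --- a full-dimensional set --- has $\gram(f)^w$ a rank-$3$ point with image $\spn(q)^\perp$, hence $\gram(f)^w\neq\theta_f$ and $w\notin N_f$ (\cref{prop:binary_sextics_1_dim_nc}). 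No limits are needed.

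For (A) you propose a direct computation showing $\Phi_f(s^2,2st,t^2)\succeq 0$ via the Blaschke structure of $g_f$, but you only assert this; it is the entire content of the theorem and you do not prove it. The paper avoids this computation. The same \cref{lem:binary_sextics_rk_1_completion} says that for $w\in S$ there is \emph{no} rank-$1$ completion, so $\gram(f)^w$ must be one of the four rank-$2$ points for every $f\in K'$. The question is only: which one? Here the paper invokes \cref{lem:binary_sextics_ex_non_intersection}, an explicit verification on two specific sextics that the six normal cones at $\theta_2,\theta_3,\theta_4$ are pairwise disjoint; hence any $w\in S$ lies in the normal cone of $\theta_f$ for at least one of them. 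Since $S$ and $K'$ are connected and the normal cones at rank-$2$ points vary continuously (\cref{prop:continuous_points_nc}), it follows that $S\subset N_f$ for \emph{every} $f\in K'$.

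So the paper's economy comes from \cref{lem:binary_sextics_rk_1_completion}, which does double duty: it immediately yields (B), and its converse direction reduces (A) to a discrete choice among four points, resolved by one explicit example plus connectivity. Your Blaschke-product route for (A), if it can be made to work, would be more conceptual --- it would explain \emph{why} the distinguished factorisation is special rather than deducing it by elimination --- but as written it is a hope, not a proof.
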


\begin{lem}
\label{lem:binary_sextics_rk_1_completion}
Let $0\neq w\in W\setminus S$. Then there exist $q\in\Rx_3$ and $v\in V$ such that $q\otimes q=v+w$.
\end{lem}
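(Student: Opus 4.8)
The plan is to turn the completion problem into an explicit system of three quadratic equations and solve it by a short case distinction on the signs of the coordinates of $w$.

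First I would unwind what $q\otimes q=v+w$ with $v\in V$ means. Since $V=W^\perp$, such a $v$ exists if and only if $q\otimes q$ and $w$ have the same $W$-component, i.e. $\langle q\otimes q,R_i\rangle=\langle w,R_i\rangle$ for $i=1,2,3$. Writing $q=ax^3+bx^2y+cxy^2+dy^3$, one computes that $\langle q\otimes q,R_1\rangle,\langle q\otimes q,R_2\rangle,\langle q\otimes q,R_3\rangle$ equal $ac-b^2,\ ad-bc,\ bd-c^2$ each times one and the same positive constant; comparing with the analogous expressions for $\langle w,R_i\rangle$ and absorbing the resulting common positive factor (harmless, since $S$ is cut out by a condition homogeneous of the same degree on both sides), the problem becomes: find real $a,b,c,d$ with
\[
ac-b^2=\lambda_1,\qquad ad-bc=\lambda_2,\qquad bd-c^2=\lambda_3 ,
\]
where $(\lambda_1,\lambda_2,\lambda_3)$ are the coordinates of $w$; that is, realize $(\lambda_1,\lambda_2,\lambda_3)$ as the triple of $2\times 2$ minors of the catalecticant matrix $\left(\begin{smallmatrix}a&b&c\\ b&c&d\end{smallmatrix}\right)$. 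It helps to note the dictionary $w\in S\iff$ the binary quadratic $\lambda_1t^2+\lambda_2ts+\lambda_3s^2$ is positive semidefinite, so the hypothesis $w\notin S$ says precisely that this quadratic takes a negative value.

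I would then solve the system in cases. If $\lambda_1<0$, set $a=0$: the first equation forces $b=\sqrt{-\lambda_1}$, the second gives $c=-\lambda_2/b$, the third gives $d=(\lambda_3+c^2)/b$, an explicit real solution. If $\lambda_3<0$, the involution $(a,b,c,d)\mapsto(d,c,b,a)$ swaps $\lambda_1$ with $\lambda_3$ and fixes $\lambda_2$, reducing to the previous case. In the remaining case $\lambda_1,\lambda_3\ge 0$ the hypothesis $w\notin S$ forces $\lambda_2^2>4\lambda_1\lambda_3$, hence $\lambda_2\neq 0$. If $\lambda_1=\lambda_3=0$, take $b=c=0,\ a=1,\ d=\lambda_2$; if exactly one of $\lambda_1,\lambda_3$ vanishes, a similar one-parameter family works (for instance if $\lambda_1=0<\lambda_3$: $b=1,\ a=\lambda_2/\lambda_3,\ c=\lambda_3/\lambda_2,\ d=\lambda_3+\lambda_3^2/\lambda_2^2$). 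If $\lambda_1,\lambda_3>0$, solve the first and third equations for $a=(\lambda_1+b^2)/c$ and $d=(\lambda_3+c^2)/b$ and substitute into the second; it collapses to the quadratic $\lambda_3b^2-\lambda_2bc+\lambda_1c^2+\lambda_1\lambda_3=0$ in $b$, with discriminant $(\lambda_2^2-4\lambda_1\lambda_3)c^2-4\lambda_1\lambda_3^2$. Since $\lambda_2^2-4\lambda_1\lambda_3>0$, this is nonnegative for $|c|$ large, so a real solution $(b,c)$, and with it a real tuple $(a,b,c,d)$, exists (one notes $b\neq 0$ automatically, so that $d$ is well defined). The argument finishes with the routine check that each tuple produced solves all three equations.

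I expect the last case, $\lambda_1,\lambda_3>0$ with $\lambda_2^2>4\lambda_1\lambda_3$ — geometrically the part of $W\setminus S$ lying ``outside'' the ice-cream cone $S$ rather than across one of the hyperplanes $\{\lambda_1=0\}$ or $\{\lambda_3=0\}$ — to be the main obstacle: no entry of $q$ can be set to zero, the substitution reducing to a single quadratic in $b$ is not the first thing one tries, and it is exactly the strict inequality $\lambda_2^2>4\lambda_1\lambda_3$ coming from $w\notin S$ that is used, to force the relevant discriminant eventually positive. A secondary point needing care is that the case split genuinely exhausts $W\setminus S$; in particular the boundary sub-cases $\lambda_1=0$ or $\lambda_3=0$ with $\lambda_2\neq0$ must be handled separately, as the substitution used when $\lambda_1,\lambda_3>0$ breaks down there.
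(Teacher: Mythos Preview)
Your approach is essentially the paper's: both reduce the completion problem to the system $ac-b^2=\lambda_1,\ ad-bc=\lambda_2,\ bd-c^2=\lambda_3$ and then determine for which $(\lambda_1,\lambda_2,\lambda_3)$ it has a real solution. The paper simply hands this system to \texttt{Mathematica} and reports that the solvable set is the complement of $S$; you instead carry out the explicit case distinction by hand, which is precisely what the paper's following Remark says can be done (and your discriminant $(\lambda_2^2-4\lambda_1\lambda_3)c^2-4\lambda_1\lambda_3^2$ is, up to relabelling, the term $\sqrt{-4\lambda_1^2\lambda_3+a_2^2(\lambda_2^2-4\lambda_1\lambda_3)}$ singled out there). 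Your case analysis is correct and exhaustive, including the boundary sub-cases $\lambda_1=0$ or $\lambda_3=0$ and the check $b\neq 0$ in the main case. One small point: the phrase ``one and the same positive constant'' is accurate for the $\langle q\otimes q,R_i\rangle$ side (the constant is $2$), but when you match against $\langle w,R_i\rangle$ the norms $\|R_i\|^2$ are not all equal, so the passage to ``the coordinates of $w$'' deserves one more line; the paper's own proof is equally brisk about this normalisation, and the final inequality $\lambda_2^2\le 4\lambda_1\lambda_3$ comes out the same either way.
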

\begin{proof}
Let $w=\sum_{i=1}^3 \frac{\lambda_i}{\norm{R_i}}R_i$ and let $q=a_1x^3+a_2x^2y+a_3xy^2+a_4y^3$. Then via direct calculation we see that
$q\otimes q= v + w$ ($v\in V,\, w\in W$) ($v$ is the unique point in $V$ that maps to $q^2$ since $\mu(q\otimes q)=q^2$ and $\mu(w)=0$) if
\[
\lambda_1=2(-a_2^2+a_1a_3),\quad \lambda_2=2(-a_2a_3+a_1a_4),\quad \lambda_3=2(a_3^2+a_2a_4).
\]
Hence, given $w$, or equivalently $\lambda_1,\lambda_2,\lambda_3\in\R$, we want to find $a_1,\dots,a_4\in\R$ as above. We use \texttt{Mathematica} to solve this over the reals and get the complement of $S$.
\end{proof}

\begin{rem}
If one wants to avoid the use of \texttt{Mathematica}, this can also be done by hand as one can explicitly write down solutions. In these the only term that needs closer inspection is
\[
\sqrt{-4 \lambda_1^2 \lambda_3 + a_2^2(\lambda_2^2 - 4 \lambda_1 \lambda_3)}
\]
which is easily done. However, depending on whether $\lambda_i=0$ for some $i\in\{1,2,3\}$ there are several case distinctions.
\end{rem}

\begin{prop}
\label{prop:binary_sextics_1_dim_nc}
Let $0\neq w\in W\setminus S$. The face of $\Sigma_\mu K$ in direction $w$ has a 1-dimensional normal cone.
\end{prop}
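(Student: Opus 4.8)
The plan is to reduce the statement, via \cref{thm:normal_cone_fb}, to a statement about the fibers $\gram(f)$ and then to prove that the face of $\gram(f)$ in direction $w$ has rank $3$ for almost every $f$.

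First I would apply \cref{lem:binary_sextics_rk_1_completion}: since $w\notin S$ there are $q\in\Rx_3$ and $v\in V$ with $q\otimes q=v+w$, so by \cref{lem:correspondence_functionals} the face of $\gram(f)$ in direction $w$ equals its face in direction $q\otimes q$ for every nonzero $f$. Write $F=(\Sigma_\mu K)^w$; since $\Sigma_\mu K$ is compact and $3$-dimensional, $F$ is a proper face and $\dim\normal_{\Sigma_\mu K}(F)\ge 1$, so it suffices to show $\normal_{\gram(f)}\!\bigl(\gram(f)^{q\otimes q}\bigr)=\R_{\ge 0}w$ for almost every positive definite binary sextic $f$ with distinct zeros and then invoke \cref{thm:normal_cone_fb}. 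But if $\gram(f)^{q\otimes q}$ is a rank $3$ Gram tensor then, by the boundary description recalled at the beginning of this section, it is an extreme point with a $1$-dimensional normal cone, which — containing $w$ — must be $\R_{\ge 0}w$. Hence the whole problem reduces to showing that $\gram(f)^{q\otimes q}$ has rank $3$ for almost all such $f$.

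To prove this I would eliminate the other ranks. Rank $\le 1$ forces $f$ to be a square, a measure zero condition. Rank $2$ means $w$ lies in the normal cone of one of the (at most four) rank $2$ Gram tensors of $f$; the set of such $f$ is semi-algebraic, so if it had positive measure it would contain a nonempty open subset of $K'$, and I would distinguish whether the rank $2$ tensor involved is the canonical one $\theta_f$ or not. If $w$ lay in $\normal_{\gram(f)}(\theta_f)$ on an open subset of $K'$, then, using the connectedness of $K'$ and the fact (\cref{prop:continuous_points_nc}) that the rank $2$ points and their normal cones vary continuously on $K'$, a continuation argument would force $w\in\normal_{\gram(f)}(\theta_f)$ for almost every $f$, and hence $w\in S$ by \cref{thm:normal_cone_fb} (applied as in the proof of \cref{thm:binary_sextics_3_dim_nc}, where $S$ is computed from the cones $\normal_{\gram(f)}(\theta_f)$), a contradiction. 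If instead $w$ lay in the normal cone of a non-canonical rank $2$ tensor on an open subset, I would continue such a tensor and its normal cone to the two explicit sextics of \cref{lem:binary_sextics_ex_non_intersection} (the continued branch may be permuted among the three non-canonical rank $2$ tensors by the braid-type monodromy of $K'$, but this is harmless), obtaining for each of those two sextics a non-canonical rank $2$ tensor whose normal cone contains $w$; these are two of the six cones of \cref{lem:binary_sextics_ex_non_intersection}, which are pairwise disjoint, so $w=0$, again a contradiction.

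Finally, knowing that $\gram(f)^{q\otimes q}$ has rank $3$ for almost every $f$, I would conclude: for any $w'\notin\R_{\ge 0}w$ the inclusion $\gram(f)^{q\otimes q}=\gram(f)^w\subseteq\gram(f)^{w'}$ can hold only on a measure zero set, since otherwise $w'$ would lie in the $1$-dimensional cone $\normal_{\gram(f)}(\gram(f)^{q\otimes q})=\R_{\ge 0}w$ for almost all $f$ and hence $w'\in\R_{\ge 0}w$; by \cref{thm:normal_cone_fb} this gives $\normal_{\Sigma_\mu K}(F)\subseteq\R_{\ge 0}w$, so the normal cone is exactly $\R_{\ge 0}w$. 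The hard part is the rank $2$ elimination, and within it the propagation step: one must justify, carefully tracking the moments where $w$ reaches the boundary of a normal cone and the monodromy of the non-canonical rank $2$ points, that membership of $w$ in a rank $2$ normal cone over an open subset of $K'$ can be continued to the sextics of \cref{lem:binary_sextics_ex_non_intersection}; the semi-algebraicity of the relevant sets and \cref{prop:continuous_points_nc} are the tools that make this work.
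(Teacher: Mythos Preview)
Your approach is substantially more complicated than the paper's and contains a genuine gap. The paper's proof is short: using \cref{lem:binary_sextics_rk_1_completion} it writes $q\otimes q=v+w$, sets $U=\spn(q)^\perp$, and observes that for $f\in\interior\Sigma U^2$ the face $\gram(f)^w$ is a rank~$3$ tensor with image $U$, hence has a $1$-dimensional normal cone. By continuity of the rank~$2$ points and their normal cones (\cref{prop:continuous_points_nc}) the same holds on a full open neighbourhood of a generic such $f$ in $\Sigma_{2,6}$, and then the remark following \cref{thm:normal_cone_fb} finishes: if the fibre normal cone equals $\R_{\ge0}w$ on a set of \emph{positive} measure, any $w'\notin\R_{\ge0}w$ is excluded from $\normal_{\Sigma_\mu K}(F)$.

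You instead aim to show rank $3$ for \emph{almost all} $f$, which is stronger than needed, and the continuation arguments you propose (Cases~1 and~2) are not justified. In Case~1 you want to propagate $w\in\normal_{\gram(f)}(\theta_f)$ from an open set to almost all of $K'$ by connectedness and continuity; but continuity of the cones does not give this, since $w$ may sit on the boundary of the cone at the frontier of your open set and then leave it. You flag exactly this as ``the hard part'', and it is a real obstruction, not a technicality: the four rank~$2$ normal cones are full-dimensional in $W$ and move with $f$, so there is no a priori reason the set $\{f\in K':\gram(f)^w\text{ has rank }2\}$ has measure zero for $w\notin S$. In Case~2 the problem is the same: you continue the rank~$2$ point and its cone along a path to the two sextics of \cref{lem:binary_sextics_ex_non_intersection}, but nothing forces $w$ to remain inside the cone along the path, so you cannot conclude that $w$ lies in two of the six pairwise disjoint cones. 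There is also a mild circularity in your Case~1: you invoke the identification of $S$ with the normal cone at $\Theta$ ``as in the proof of \cref{thm:binary_sextics_3_dim_nc}'', but the equality $\normal_{\Sigma_\mu K}(\Theta)=S$ (as opposed to the inclusion $S\subset\normal_{\Sigma_\mu K}(\Theta)$) itself relies on the proposition you are proving. The fix is not to repair the continuations but to drop the ``almost all'' target and use the positive-measure criterion directly, as the paper does.
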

\begin{proof}
By \cref{lem:binary_sextics_rk_1_completion} we can find $q\in\Rx_3,\, v\in V$ such that $q\otimes q=v+w$. Let $U=\spn(q)^\perp$. Then for every $f\in\interior\Sigma U^2$ the face of $\gram(f)$ in direction $w$ is a Gram tensor with image $U$, especially the normal cone has dimension 1.
For a general choice of $f\in\Sigma U^2$ the normal cones of the rank 2 points depend continuously on $f$ by \cref{prop:continuous_points_nc} as $f$ has only simple zeros and hence the face in direction $w$ has a 1-dimensional normal cone in a neighbourhood of $f$. Hence, the normal cone of $\Sigma_\mu K$ in direction $w$ has dimension 1 by \cref{thm:normal_cone_fb}.
\end{proof}

\begin{proof}[Proof of \cref{thm:binary_sextics_3_dim_nc}]
Let $w\in S$. Then $w$ does not span the normal cone of a rank 3 point on any \gs\ since we cannot complete to a psd rank 1 tensor by the proof of \cref{lem:binary_sextics_rk_1_completion}. Therefore, on all of them the face is a rank 2 extreme point. 
By \cref{lem:binary_sextics_ex_non_intersection} there exists $f\in K'$ such that the face in direction $w$ is $\theta_f$.
Since the set $S$ is convex, especially connected, and the same holds for $K'$ where normal cones at rank 2 points depend continuously on $f$, it follows that $S$ is contained in the normal cone at $\theta_f$ for every $f$. (Note that the normal cone at $\theta_f$ is usually larger.)
\end{proof}

\begin{rem}
On the left in \cref{fig:binary_sextics} we see the 3-dimensional fiber body in the case of binary sextics. This picture was generated using \texttt{Julia} \cite{beks2017} as follows: We first sample 50 random psd binary sextics and 10000 random points in $W$. Solving the SDP on every \gs\ and every direction, we find 10000 points on the boundary in all 50 cases. For any direction, the face on the fiber body in this direction is then taken as the average of the 50 points. The extreme point at the bottom left of the picture is the single point on the boundary with a 3-dimensional normal cone.

On the right in \cref{fig:binary_sextics} we see the \gs\ of the form $x^6+y^6$. According to experiments in \texttt{Julia} the probability is rather high that the \gs\ of a random psd binary sextic has the other three rank 2 points located similarly to this \gs, i.e. the normal cones intersecting. This can also still be seen on the boundary of the fiber body as these three directions look more 'pointed' even though they have 1-dimensional normal cones. 
\end{rem}

\begin{rem}
According to experiments in \texttt{Mathematica}, the 3-dimensional normal cone is in fact a limit and not the intersection of finitely many normal cones. It seems that $w=R_3$ is contained in the interior of the normal cone for every $f$ but is contained in the boundary of the normal cone on the fiber body. This makes it even more surprising that the normal cone at $\Theta$ has such an easy \sa\ description.

We do not know whether the fiber body itself is a \sa\ set. 
\end{rem}

\begin{figure}
    \begin{center}
      \begin{minipage}{.48\textwidth}
      \includegraphics[scale=0.35]{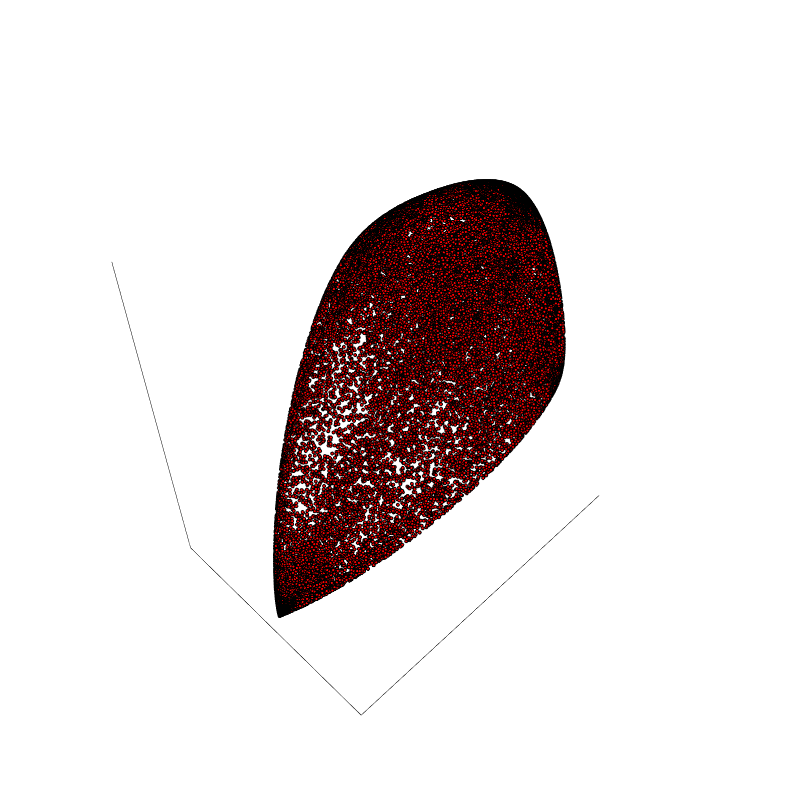}
      \end{minipage}
      \begin{minipage}{.48\textwidth}
      \includegraphics[scale=0.35]{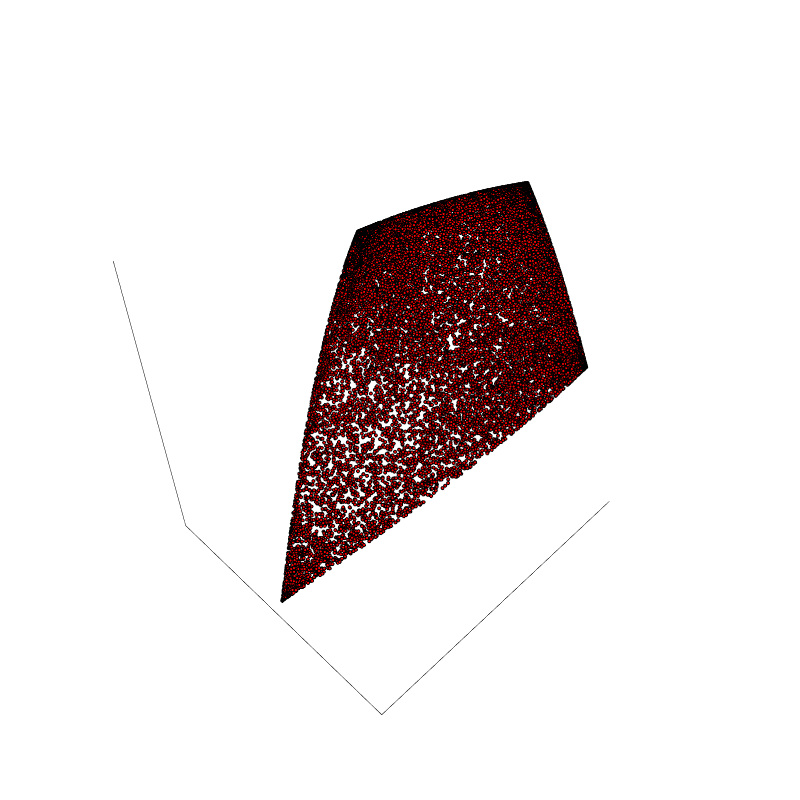}
      \end{minipage}
      \caption{Left: the fiber body in the case of binary sextics. Right: the \gs\ of the form $x^6+y^6$.}
      \label{fig:binary_sextics}
    \end{center}
  \end{figure}

\section{Ternary quartics}
\label{sec:ternary_quartics}

In this section we study the fiber body in the case of ternary quartics, i.e. $n=3,\, d=2$. \gsa\ in this case generally are of dimension 6 and have positive dimensional faces as well as a larger number of possible dimensions of normal cones. Moreover, it is the first case where the Pataki range has length 3, i.e. extreme points on the \gs\ of a general psd ternary quartic may have rank 3,4 or 5.
We show that the fiber body has a rather different facial structure than a general \gs. On the one hand, the dimensions of faces grow, as the 2-dimensional family of 2-dimensional faces is replaced by a 2-dimensional family of 3-dimensional faces. On the other hand, every face of the fiber body has a 1-dimensional normal cone whereas \gsa\ have normal cones of dimensions 1,3 and 6 (\cref{thm:ternary_quartics_fb}).

For any $f\in\Rx_{2d}$ and $r,s\in\N_0$ we denote by $S_f(r,s)$ the semi-algebraic set of all Gram tensors of $f$ of rank $r$ that lie in the relative interior of a face of dimension $s$.

\begin{thm}[\cite{Vill21}, \cite{prss2004}]
Let $f\in\Sigma_{3,4}$ generic. The Gram spectrahedron $\gram(f)\subset\sy^+{\Rx_2}$ has dimension 6. A general point on the boundary is a rank 5 extreme point. Every rank 5 extreme point has a 1-dimensional normal cone.
The semi-algebraic set $S_f(5,2)$ has dimension 4 and every point in it also has a 1-dimensional normal cone.
The set $S_f(4,0)$ has dimension 3 and every point has a 3-dimensional normal cone. Moreover, there are exactly 8 rank 3 Gram matrices each with a 6-dimensional normal cone.

There are no Gram tensors of rank lower than 3 and there are no other faces than the ones mentioned above (for generic $f$).
\end{thm}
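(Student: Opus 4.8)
The plan is to translate every assertion about $\gram(f)$ into a statement about the subspaces $U\subseteq\Rx_2$ that occur as images of Gram tensors, and then to carry out a dimension count on a Grassmannian. Recall that every face of the spectrahedron $\gram(f)$ has the form $F_U=\{\theta\in\gram(f)\colon\im\theta\subseteq U\}$, where $U$ is the common image of the points in its relative interior; such a $U$ occurs precisely when $f\in\interior\Sigma U^2$, in which case $\dim F_U=\binom{\dim U+1}{2}-\dim U^2$ by the Corollary to \cref{prop:dimension_face_spec}, and the normal cone of $F_U$ (inside the ambient $\sy{\Rx_2}$) has dimension $\dim\sy{\Rx_2}-\crk(\dphi(U))+\binom{\dim U}{2}$ by \cref{thm:normal_cones}, with $\dphi(U)\colon\Rx_2^{\dim U}\to\Rx_4$ of image $U\Rx_2$; the $\dim\Rx_4=15$ directions orthogonal to $W$ are flat, so the effective (within-$W$) dimension of the normal cone is this number minus $15$. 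It therefore suffices to (i) pin down which dimensions of $U$ occur, (ii) for each, stratify the admissible $U$ by the integers $\dim U^2$ and $\crk(\dphi(U))$, and (iii) compute the dimension or cardinality of the corresponding locus of $U$ in $\mathrm{Gr}(\dim U,\Rx_2)$.

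\textbf{Dimension and the range of ranks.} As $\dim\Rx_2=6$ and $\dim\Rx_4=15$, the kernel $W=\ker\mu$ has dimension $21-15=6$; a generic $f$ lies in the interior of $\Sigma_{3,4}$ and has a positive definite Gram tensor, which is an interior point of $\gram(f)$ in the $6$-dimensional affine hull it spans, so $\dim\gram(f)=6$ and $\partial\gram(f)$ has dimension $5$. A rank $6$ Gram tensor is positive definite, hence interior, so boundary Gram tensors have rank $\le5$ — equivalently, the upper Pataki inequality $\binom{r+1}{2}\le15$ for the slice $\Sym^6_+\cap L$ (here $L$ imposes $15$ linear conditions) gives $r\le5$. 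A Gram tensor of rank $r\le2$ would give $f=\sum_{i=1}^r p_i^2$ with $p_i\in\Rx_2$; the sums of at most two squares of quadratics form a subset of $\Rx_4$ of dimension at most $11<15$ (the map $(p,q)\mapsto p^2+q^2$ has $1$-dimensional generic fibres by the $O(2)$-symmetry), so this does not occur for generic $f$, and there are no Gram tensors of rank $<3$ at all. Hence the ranks occurring on $\partial\gram(f)$ lie in $\{3,4,5\}$.

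\textbf{The three rank strata.} A generic $r$-dimensional $U\subseteq\Rx_2$ has $\sy{U}\cap W=0$, so $\dim U^2=\min\!\big(\binom{r+1}{2},15\big)$, and $U\Rx_2=\Rx_4$, so $\crk(\dphi(U))=6r-15$. Substituting into the two formulas: for $r=5$ the face is a point and \cref{thm:normal_cones} returns $21-15+10=16$, i.e. a $1$-dimensional normal cone within the affine hull; for $r=4$ it is a point and one gets $21-9+6=18$, i.e. a $3$-dimensional normal cone; for $r=3$ it is a point and one gets $21-3+3=21$, i.e. a $6$-dimensional (full-dimensional) normal cone. It remains to count admissible $U$. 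For $r=5$, $f\in U^2=\Rx_4$ is automatic, so the admissible $U$ fill all of $\mathrm{Gr}(5,\Rx_2)\cong\P^5$, a $5$-dimensional family; this matches $\dim\partial\gram(f)=5$, so a general boundary point is a rank $5$ extreme point. For $r=4$, $f\in U^2$ is $15-10=5$ conditions on $\mathrm{Gr}(4,\Rx_2)$ (dimension $8$), a $3$-dimensional family, so $\dim S_f(4,0)=3$. For $r=3$, $f\in U^2$ is $15-6=9$ conditions on $\mathrm{Gr}(3,\Rx_2)$ (dimension $9$), a finite set; its cardinality, from an excess-intersection computation on the Grassmannian — equivalently the classical count of representations of a generic psd ternary quartic as a sum of three squares up to orthogonal equivalence — is $8$, and here I would cite \cite{prss2004}.

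\textbf{The exceptional stratum, and completeness — the main obstacle.} For the special $5$-dimensional $U$ with $\sy{U}\cap W\neq0$, the integer $\dim U^2$ drops below $15$, and when $\dim U^2=13$ the face $F_U$ is $2$-dimensional while (one checks) $U\Rx_2=\Rx_4$ still holds, so its normal cone remains $1$-dimensional. The genuinely delicate content of the theorem is then twofold: (a) the locus of $5$-dimensional $U$ with $\dim U^2=13$ that additionally satisfy $f\in\interior\Sigma U^2$ is exactly $2$-dimensional, whence $\dim S_f(5,2)=2+2=4$; and (b) \emph{no other} pair $(\dim U,\dim U^2)$ is realized by the image of a Gram tensor of a generic $f$ — in particular there are no $1$-dimensional faces (no $5$-dimensional $U$ with $\dim U^2=14$ supports a Gram tensor of generic $f$) and no faces of dimension $\ge3$ arising from $\dim U\le4$. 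Neither statement is a pure Grassmannian count: the condition in play is $f\in\interior\Sigma U^2$, a \emph{positivity} statement strictly stronger than $f\in U^2$, and determining for which $U$ it can hold requires the structural description of the images of Gram tensors of ternary quartics from \cite{Scheiderer18} and \cite{Vill21}. I expect step (b) to be the main obstacle; everything else is the bookkeeping above together with \cref{thm:normal_cones}.
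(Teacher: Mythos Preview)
The paper does not prove this theorem: it is stated as background and attributed to \cite{Vill21} and \cite{prss2004}, with no argument supplied. There is therefore no ``paper's own proof'' to compare your attempt against; the paper simply quotes the result and then uses \cref{prop:ternary_quartics_rk_5_faces} (also quoted from \cite{Vill21}) and \cref{thm:normal_cones} when it needs specific pieces of it.

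That said, your sketch is a sound reconstruction of how such a proof goes, and the numerics are right: the dimension of $W$, the Pataki range $3\le r\le 5$, the normal-cone dimensions obtained from \cref{thm:normal_cones} after stripping off the $15$ flat directions coming from $V$, and the Grassmannian dimension counts for $r=4$ and $r=3$ all check out. You also correctly isolate the two substantive steps that are \emph{not} bookkeeping --- the $2$-dimensional family of $2$-faces and the completeness claim --- and you correctly attribute them to \cite{Vill21}. That is exactly the division of labour the paper adopts.

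Two small corrections. First, for $r=5$ and fixed generic $f$, the admissible $U$ do not fill all of $\mathrm{Gr}(5,\Rx_2)$; they form an open (hence $5$-dimensional) subset determined by the positivity condition $f\in\interior\Sigma U^2$. This is enough for the dimension match with $\partial\gram(f)$, but the phrasing should be adjusted. Second, your concern about $5$-dimensional $U$ with $\dim U^2=14$ is misplaced: by the trichotomy in \cref{prop:ternary_quartics_rk_5_faces}, for a hyperplane $U\subset\Rx_2$ the integer $\dim U^2$ is governed solely by the rank of the quadric spanning $U^\perp$ and takes only the values $15$, $13$, or $12$ (the last forcing a real zero on $f$), independently of $f$. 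So there is no $\dim U^2=14$ case to rule out; the actual content of (b) lies in showing that for generic $f$ no $4$-dimensional $U$ gives a positive-dimensional face and that the various transversality/positivity conditions hold, which is what \cite{Vill21} supplies.
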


\begin{prop}[{\cite[proof of Prop 5.2.]{Vill21}}]
\label{prop:ternary_quartics_rk_5_faces}
Let $f\in\Sigma_{3,4}$ smooth and $\theta\in\gram(f)$ be a rank 5 tensor with $U:=\im(\theta)$. Then
\begin{enumerate}
\item $\theta$ is an extreme point if and only if $\dim U^2=15$, i.e. $U^2=\Rx_4$, equivalently $U^\perp$ is spanned by a quadratic form of rank 3, and
\item $\theta$ is contained in the relative interior of a 2-dimensional face if and only if $\dim U^2=13$ if and only if $U^\perp$ is spanned by a quadratic form of rank 2.
\end{enumerate}
\end{prop}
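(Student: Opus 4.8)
The plan is to reduce the two equivalences to a single invariant — the dimension $\dim U^2$ — and then to a rank computation for the quadratic form spanning $U^\perp$.

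First I would pass from $\theta$ to the dimension of the face of $\gram(f)$ it determines. Since $\theta$ has rank exactly $5$, its image is exactly $U$, so $f=\mu(\theta)$ lies in the relative interior of $\Sigma U^2$; consequently the smallest face of the spectrahedron $\gram(f)$ containing $\theta$ is $F_U:=\{\eta\in\gram(f)\colon\im\eta\subseteq U\}$, which by \cref{prop:dimension_face_spec} applied to the Gram map $\mu$ is the face in the direction of any tensor with image $U$ and has dimension $\binom{\dim U+1}{2}-\dim U^2=15-\dim U^2$. Hence $\theta$ is an extreme point iff $F_U=\{\theta\}$ iff $\dim U^2=15$, i.e.\ $U^2=\Rx_4$; and $\theta$ lies in the relative interior of a $2$-dimensional face iff $\dim F_U=2$ iff $\dim U^2=13$. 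Since $\dim U=5$, we have $U^\perp=\langle q\rangle$ for a unique (up to scalar) nonzero $q\in\Rx_2$, of rank $1$, $2$ or $3$.

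Next I would show that $\dim U^2$ depends only on $\rk q$ and compute it. The group $\gl_3$ acts on $\Rx_2$ by ring automorphisms, so $g(U^2)=(gU)^2$ and $\dim U^2$ is $\gl_3$-invariant; moreover $g$ carries the apolar complement $q^\perp$ to the apolar complement of another quadratic form of the same rank (this step uses that apolarity is the identification of $\Rx_2$ with its dual coming from the standard inner product), and $\gl_3$ is transitive on rank-$\rho$ quadratic forms, so $\dim U^2$ is a function of $\rk q$ alone, insensitive even to signature since the dimension of a span is unchanged by complexification. It then suffices to compute $\dim(q^\perp)^2$ for $q=x_1^2$, $q=x_1^2+x_2^2$ and $q=x_1^2+x_2^2+x_3^2$: in each case one writes down a basis of $U=q^\perp$ together with the spanning set of pairwise products and counts monomials, obtaining $\dim U^2=12,\,13,\,15$ respectively. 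Combined with the previous step this gives $\theta$ extreme $\iff\dim U^2=15\iff\rk q=3$, and $\theta$ in the relative interior of a $2$-face $\iff\dim U^2=13\iff\rk q=2$, which is (i) and (ii). The case $\rk q=1$ is where smoothness enters: if $q=\ell^2$, then every element of $U=(\ell^2)^\perp$ vanishes at a common point $P$, so every $f\in\interior\Sigma U^2$ — being a sum of squares of such quadratics — vanishes to order $\ge 2$ at $P$ and is therefore singular; hence $\rk q\ne 1$.

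The hard part is the second step: proving cleanly that $\dim U^2$ depends only on $\rk q$ (the apolarity pairing is not $\gl_3$-invariant, so one must be careful about how apolar complements transform under $\gl_3$), and then the three monomial-span computations, of which $\rk q=2$ is the most delicate, since there one must check that $U^2$ omits exactly a $2$-dimensional subspace of $\Rx_4$. The first step is routine given \cref{prop:dimension_face_spec} and standard facial theory for spectrahedra.
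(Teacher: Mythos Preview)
The paper does not prove this proposition; it is quoted from \cite{Vill21} and stated without argument, so there is nothing in the paper to compare against directly. Your proof is correct and is the natural one: the supporting face of $\theta$ has dimension $\binom{6}{2}-\dim U^2=15-\dim U^2$ by the face-dimension formula for Gram spectrahedra (the corollary of \cref{prop:dimension_face_spec} stated in \cref{sec:faces_of_gram_spectrahedra}), and $\dim U^2$ is a $\gl_3$-invariant of $U$ that depends only on the $\gl_3(\C)$-orbit of the line $U^\perp$, hence only on $\rk q$. The explicit monomial computations for a rank-$2$ and a rank-$3$ normal form give $13$ and $15$, and the rank-$1$ case is excluded by smoothness exactly as you say. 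The only step that deserves one more explicit sentence is the one you already flag: under $g\in\gl_3$ the apolar complement transforms via the contragredient, i.e.\ $(gU)^\perp=\langle (g^{-1})^T\!\cdot q\rangle$, which has the same rank as $q$; once this is said the reduction to normal forms is clean.
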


\begin{rem}
If $U^\perp$ is spanned by a quadratic form of rank 1, then elements in $U$ have a common real zero and hence the same holds for $f$ which is impossible for smooth $f$.
\end{rem}

In the case of ternary quartics we order the monomial basis of $\Rx_2$ as $(x^2,y^2,z^2,xy,xz,yz)$. With this choice, a basis of $W$ is given by
\[
R_1=\left(\begin{array}{rrrrrr}
0 & 1 & 0 & 0 & 0 & 0 \\
1 & 0 & 0 & 0 & 0 & 0 \\
0 & 0 & 0 & 0 & 0 & 0 \\
0 & 0 & 0 & \minus 2 & 0 & 0 \\
0 & 0 & 0 & 0 & 0 & 0 \\
0 & 0 & 0 & 0 & 0 & 0
\end{array}\right),
R_2=\left(\begin{array}{rrrrrr}
0 & 0 & 1 & 0 & 0 & 0 \\
0 & 0 & 0 & 0 & 0 & 0 \\
1 & 0 & 0 & 0 & 0 & 0 \\
0 & 0 & 0 & 0 & 0 & 0 \\
0 & 0 & 0 & 0 & \minus 2 & 0 \\
0 & 0 & 0 & 0 & 0 & 0
\end{array}\right),
R_3=\left(\begin{array}{rrrrrr}
0 & 0 & 0 & 0 & 0 & 0 \\
0 & 0 & 1 & 0 & 0 & 0 \\
0 & 1 & 0 & 0 & 0 & 0 \\
0 & 0 & 0 & 0 & 0 & 0 \\
0 & 0 & 0 & 0 & 0 & 0 \\
0 & 0 & 0 & 0 & 0 & \minus 2
\end{array}\right)
\]
\[
R_4=\left(\begin{array}{rrrrrr}
0 & 0 & 0 & 0 & 0 & 1 \\
0 & 0 & 0 & 0 & 0 & 0 \\
0 & 0 & 0 & 0 & 0 & 0 \\
0 & 0 & 0 & 0 & \minus 1 & 0 \\
0 & 0 & 0 & \minus 1 & 0 & 0 \\
1 & 0 & 0 & 0 & 0 & 0
\end{array}\right),
R_5=\left(\begin{array}{rrrrrr}
0 & 0 & 0 & 0 & 0 & 0 \\
0 & 0 & 0 & 0 & 1 & 0 \\
0 & 0 & 0 & 0 & 0 & 0 \\
0 & 0 & 0 & 0 & 0 & \minus 1 \\
0 & 1 & 0 & 0 & 0 & 0 \\
0 & 0 & 0 & \minus 1 & 0 & 0
\end{array}\right),
R_6=\left(\begin{array}{rrrrrr}
0 & 0 & 0 & 0 & 0 & 0 \\
0 & 0 & 0 & 0 & 0 & 0 \\
0 & 0 & 0 & 1 & 0 & 0 \\
0 & 0 & 1 & 0 & 0 & 0 \\
0 & 0 & 0 & 0 & 0 &  \minus 1 \\
0 & 0 & 0 & 0 & \minus 1 & 0
\end{array}\right).
\]
Every element of $V$ therefore has the form
\[
A(c_{400},\dots,c_{004})=\left(\begin{array}{rrrrrr}
c_{400} & \frac{1}{3} c_{220} & \frac{1}{3} c_{202} & \frac{1}{2} c_{310} & \frac{1}{2} c_{301} & \frac{1}{4} c_{211} \\[5pt]
\frac{1}{3} c_{220} & c_{040} & \frac{1}{3} c_{022} & \frac{1}{2} c_{130} & \frac{1}{4} c_{121} & \frac{1}{2} c_{031} \\[5pt]
\frac{1}{3} c_{202} & \frac{1}{3} c_{022} & c_{004} & \frac{1}{4} c_{112} & \frac{1}{2} c_{103} & \frac{1}{2} c_{013} \\[5pt]
\frac{1}{2} c_{310} & \frac{1}{2} c_{130} & \frac{1}{4} c_{112} & \frac{1}{3} c_{220} & \frac{1}{4} c_{211} & \frac{1}{4} c_{121} \\[5pt]
\frac{1}{2} c_{301} & \frac{1}{4} c_{121} & \frac{1}{2} c_{103} & \frac{1}{4} c_{211} & \frac{1}{3} c_{202} & \frac{1}{4} c_{112} \\[5pt]
\frac{1}{4} c_{211} & \frac{1}{2} c_{031} & \frac{1}{2} c_{013} & \frac{1}{4} c_{121} & \frac{1}{4} c_{112} & \frac{1}{3} c_{022}
\end{array}\right)
\]
for some choice of $c_{400},\dots,c_{004}\in\R$. Under the Gram map $\mu$ such a matrix maps to the polynomial $f=\sum_{\alpha\in\Z_+^3} c_\alpha \ul x^\alpha\in\Rx$.

In tensor language an orthogonal basis is given by
\begin{align*}
R_1=x^2\otimes y^2-xy\otimes xy,\quad R_2=x^2\otimes z^2-xz\otimes xz,\quad R_3=y^2\otimes z^2-yz\otimes yz\\
R_4=x^2\otimes yz-xy\otimes xz,\quad R_5=y^2\otimes xz-xy\otimes yz,\quad R_6=z^2\otimes xy-xz\otimes yz.
\end{align*}

From now on we will only work with the tensors and denote by $\bil{\cdot}{\cdot}$ the scalar product on $\sy\Rx_2$ introduced in \cref{sec:faces_of_gram_spectrahedra}.

\begin{rem}
For $i=1,2,3$, we have $\bil{R_i}{R_i}=\frac{3}{4}$ and for $i=4,5,6$ we have $\bil{R_i}{R_i}=1$.
\end{rem}

We show the following theorem concerning the boundary structure of $\Sigma_\mu K$.

\begin{thm}
\label{thm:ternary_quartics_fb}
The boundary of $\Sigma_\mu K$ contains a 2-dimensional family of 3-dimensional faces. Every other point on the boundary is an extreme point.
Every face has a normal cone of dimension 1.
\end{thm}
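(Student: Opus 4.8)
The plan is to fix a direction $0\neq w\in W$, classify the fiber faces $\gram(f)^w$ as $f$ ranges over $\Sigma_{3,4}$, and then read off the boundary of $\Sigma_\mu K$ via \cref{prop:fb_extreme_points}, \cref{thm:dimension_face_fb} and \cref{thm:normal_cone_fb}. Two reductions organise everything. First, $w$ always lies in $\normal_{\gram(f)}(\gram(f)^w)$, so if $\dim\normal_{\gram(f)}(\gram(f)^w)=1$ on a set of $f$ of positive measure, then for every $\oo\subset V$ with null complement the cone $\bigcap_{f\in\oo}\normal_{\gram(f)}(\gram(f)^w)$ contains $\R_{\geq0}w$ and is contained in $\normal_{\gram(f_0)}(\gram(f_0)^w)=\R_{\geq0}w$ for some good $f_0\in\oo$, hence equals $\R_{\geq0}w$; by \cref{thm:normal_cone_fb} then $\normal_{\Sigma_\mu K}((\Sigma_\mu K)^w)=\R_{\geq0}w$. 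Second, by \cref{prop:fb_extreme_points}, $(\Sigma_\mu K)^w$ is an extreme point iff $\gram(f)^w$ is an extreme point for almost all $f$, and when it is not, \cref{thm:dimension_face_fb} computes its affine hull as the span of the affine hulls of the $\gram(f)^w$. Recall (using \cref{thm:normal_cones} for the normal-cone dimensions) that $\gram(f)^w$ has a $1$-dimensional normal cone exactly when it is a rank $5$ extreme point or a relative-interior point of a $2$-dimensional face, whereas rank $4$ points have $3$-dimensional and rank $3$ points $6$-dimensional normal cones, and that for generic $f$ the only positive-dimensional faces of $\gram(f)$ are the $2$-dimensional ones (the $1$-dimensional faces occurring only over the thin set $\mathcal{T}$).

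For the generic direction I would run the completion trick of \cref{rem:completion} as in \cref{sec:binary_sextics}: solve over $\R$, as in \cref{lem:binary_sextics_rk_1_completion} with the help of a computer-algebra system, for $v\in V$ and a rank $3$ quadratic form $q\in\Rx_2$ with $q\otimes q=v+w$, recording the locus $\mathcal{B}\subset W$ of directions admitting no such rank $3$ completion. For $w\notin\mathcal{B}$ put $U:=\spn(q)^\perp$; since $U^\perp$ is spanned by a rank $3$ form, $U^2=\Rx_4$ by \cref{prop:ternary_quartics_rk_5_faces}, so $\interior\Sigma U^2$ has nonempty interior in $\Rx_4$, and by \cref{prop:dimension_face_spec} in its Gram-spectrahedron form together with \cref{lem:correspondence_functionals} the face $\gram(f)^w$ is a rank $5$ extreme point for every $f\in\interior\Sigma U^2$; by \cref{prop:continuous_points_nc} this persists on an open set of $f$, so the first reduction gives $\normal_{\Sigma_\mu K}((\Sigma_\mu K)^w)=\R_{\geq0}w$. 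To conclude that $(\Sigma_\mu K)^w$ is actually an extreme point I would then argue by a dimension count on the incidence variety $\{(f,w):\gram(f)^w\text{ is a }2\text{-dimensional face of }\gram(f)\}\subset\Sigma_{3,4}\times W$: its $w$-slices are $3$-dimensional, since the $2$-dimensional faces of $\gram(f)$ form a $2$-dimensional family each with a $1$-dimensional normal cone, so only a subvariety of $W$ of dimension at most $3$ can have positive-measure many $f$ with $\gram(f)^w$ a $2$-dimensional face. Off that subvariety (and for $f\notin\mathcal{T}$, automatically a null set) $\gram(f)^w$ is a rank $3,4$ or $5$ extreme point for almost all $f$, hence $(\Sigma_\mu K)^w$ is an extreme point.

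The $3$-dimensional faces should come from the remaining directions, which I expect to be those $w\in\mathcal{B}$ admitting a completion $q\otimes q=v+w$ with $q$ a \emph{positive semidefinite rank $2$} form; put $U:=\spn(q)^\perp$, so $\dim U^2=13$ by \cref{prop:ternary_quartics_rk_5_faces}. Now $\interior\Sigma U^2$ is no longer thick in $\Rx_4$, so \cref{prop:dimension_face_spec} does not apply off a null set; instead one has to show directly that for $f$ in a positive-measure set the face $\gram(f)^w$ is a $2$-dimensional face of $\gram(f)$, and then compute, in the explicit basis $R_1,\dots,R_6$, the span of the affine hulls $A_f$ of these faces. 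The claim is that this span is $3$-dimensional for every $\oo$ with null complement --- the $2$-dimensional fiber faces rotate enough not to be coplanar yet still fit inside one common $3$-space --- so that \cref{thm:dimension_face_fb} yields a $3$-dimensional face $(\Sigma_\mu K)^w$, while the first reduction (each fiber face being a $2$-dimensional face, hence with a $1$-dimensional normal cone) gives $\normal_{\Sigma_\mu K}((\Sigma_\mu K)^w)=\R_{\geq0}w$. A parameter count on the psd rank $2$ forms $q$, restricted to those actually producing a $3$-dimensional span, should leave a $2$-dimensional family of such directions, equivalently of faces, and the genericity and finiteness points would be settled by an explicit computation along the lines of \cref{lem:binary_sextics_ex_non_intersection}.

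The main obstacle is this last step. Once $q$ is only psd of rank $2$, the slice $\interior\Sigma U^2$ is measure zero in $\Rx_4$, so controlling $\gram(f)^w$ for \emph{generic} $f$ --- showing the minimizer of $\bil{q\otimes q}{\cdot}$ over $\gram(f)$ stays a $2$-dimensional face on a positive-measure set --- needs a genuinely new argument, and then verifying that $\spn(A_f:f\in\oo)$ is exactly $3$-dimensional and that the family of such directions is exactly $2$-dimensional is the delicate computational heart of the proof. I also expect the bookkeeping behind the dimension counts that rule out rank $3$ and rank $4$ fiber faces (and $2$-dimensional fiber faces off the special locus) on positive-measure sets of $f$ to require either a careful stratification of these incidence varieties or direct machine computation, as in \cref{sec:binary_sextics}.
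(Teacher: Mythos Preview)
Your overall architecture is right, and the first reduction plus the rank-$1$ completion argument is exactly how the paper handles one case. But there is a genuine gap: you implicitly assume that the locus $\mathcal{B}$ of directions with no rank-$3$ completion is thin (essentially the special $2$-dimensional family), and that otherwise your first reduction applies. In fact $\mathcal{B}$ is full-dimensional---roughly half of $W$. The paper makes this precise via an explicit isomorphism $w\leftrightarrow Q(w)$ between $W$ and the space of symmetric $3\times3$ matrices, under which the projection of $q\otimes q$ to $W$ corresponds to the adjugate of the matrix of $q$ (\cref{lem:ternary_quartics_coords_projection}). One then sees that a rank-$3$ completion $q\otimes q=v+\lambda w$ with $\lambda>0$ exists iff $\det Q(w)>0$; the special $2$-parameter family is exactly $\rk Q(w)=1$; and there remains the full-dimensional region $\det Q(w)<0$ together with the hypersurface $\rk Q(w)=2$.

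For $w$ in this remaining region your first reduction cannot work at all. The normal cone at any rank $5$ extreme point of $\gram(f)$ is spanned by a direction with $\det Q(w)=\det(Q^{\adj})=\det(Q)^2>0$, and the normal cone at any $2$-dimensional face has $\rk Q(w)=1$ (\cref{rem:normal_cones_quartic}). Hence if $\det Q(w)<0$ or $\rk Q(w)=2$, then for \emph{every} smooth $f$ the face $\gram(f)^w$ is a rank $3$ or rank $4$ extreme point, so $\dim\normal_{\gram(f)}(\gram(f)^w)\geq 3$ always; there is no positive-measure set of $f$ with a $1$-dimensional fiber normal cone, and your hope of ``ruling out rank $3$ and rank $4$ fiber faces on positive-measure sets'' is impossible. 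The paper instead completes such $w$ to a rank-$2$ psd tensor $q_1\otimes q_1+q_2\otimes q_2$ to produce a full-dimensional set of $f$ with $\gram(f)^w$ a rank $4$ extreme point, and then proves directly that for any perturbation $w'\notin\R w$ one has $\gram(f)^w\neq\gram(f)^{w'}$ on a full-dimensional set of $f$; this is the technical heart of the section and uses criteria for simultaneous diagonalizability of $Q(w),Q(w')$ and an auxiliary matrix. Your plan contains no analogue of this step.
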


\begin{rem}
As in the case of binary sextics we do not know if the fiber body is \sa.
\end{rem}

First, we show \cref{thm:ternary_quartics_3-dim-face-and-nc} which is the first part of \cref{thm:ternary_quartics_fb} concerning the existence of 3-dimensional faces.

\begin{lem}
\label{lem:ternary_quartics_direction_R1}
Let $w= R_1$. There is a full-dimensional \sa\ set of forms $f\in\Sigma_{3,4}$ such that the face of $\gram(f)$ in direction $w$ has dimension 2.
\end{lem}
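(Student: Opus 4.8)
The plan is to find a two-parameter family of rank-$1$ tensors $q\otimes q$ whose $W$-components are all positive multiples of $R_1$, and then to read off the face of $\gram(f)$ in direction $R_1$ from the known description of the two-dimensional faces of Gram spectrahedra in \cref{prop:ternary_quartics_rk_5_faces}. For $(\alpha,\beta,\gamma)\in\R^3$ with $4\alpha\beta-\gamma^2>0$ set $q=q_{\alpha,\beta,\gamma}:=\alpha x^2+\gamma xy+\beta y^2\in\Rx_2$; as a ternary quadratic form $q$ involves only $x$ and $y$ and is definite there, so it has rank $2$. First I would check that the $W$-component of $q\otimes q$ is a positive multiple of $R_1$: since $q$ has no $z^2$, $xz$ or $yz$ term, each pairing $\bil{q\otimes q}{R_i}$ with $i\in\{2,\dots,6\}$ vanishes, while $\bil{q\otimes q}{R_1}=\bil q{x^2}\bil q{y^2}-\bil q{xy}^2=\alpha\beta-\tfrac14\gamma^2>0$. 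Hence $q\otimes q=v+c\,R_1$ with $v\in V$ and $c=\tfrac43\bigl(\alpha\beta-\tfrac14\gamma^2\bigr)>0$, so by \cref{lem:correspondence_functionals} the face of $\gram(f)$ in direction $q\otimes q$ equals the face in direction $c\,R_1$, i.e.\ in direction $R_1$, for \emph{every} $f$.

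\textbf{Identifying the face.}
Next I would pin down that face for the appropriate $f$. Put $U:=q^\perp\subseteq\Rx_2$, a $5$-dimensional subspace whose orthogonal complement $\spn(q)$ is spanned by a rank-$2$ form, so $\dim U^2=13$ by \cref{prop:ternary_quartics_rk_5_faces}(ii). The tensor $q\otimes q$ is positive semidefinite with $(\Sym_+^N)^{q\otimes q}=F_U:=\{G\succeq 0\colon\im G\subseteq U\}$. Applying \cref{prop:dimension_face_spec} with this direction and this face, for every $f\in\mu(\interior F_U)=\interior\Sigma U^2$ the face of $\gram(f)$ in direction $q\otimes q$ is $\{G\in\gram(f)\colon\im G\subseteq U\}\neq\emptyset$, of dimension $\binom{\dim U+1}{2}-\dim\Sigma U^2=15-13=2$. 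Combined with the first step, the face of $\gram(f)$ in direction $R_1$ has dimension $2$ for every such $f$ and every admissible $q$.

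\textbf{Full-dimensionality.}
It then remains to verify that the union $\bigcup_q\interior\Sigma(q^\perp)^2$, over the $2$-dimensional projective family of definite binary forms $q$ in $x,y$, is full-dimensional in $\Sigma_{3,4}\subseteq\R[x,y,z]_4$. Here I would carry out a dimension count on the incidence set of pairs $(q,\theta)$, with $q$ as above and $\theta\succeq 0$ of image $q^\perp$: it has dimension $2+\binom{6}{2}=17$, and the map $(q,\theta)\mapsto\mu(\theta)$ has $2$-dimensional generic fibres, because $\ker\mu\cap\sy(q^\perp)$ is $2$-dimensional for each such $q$ and, crucially, for generic $f$ only finitely many of our $q$ can have $f\in\interior\Sigma(q^\perp)^2$ — the rank-$2$ forms $q'$ with $f\in\interior\Sigma((q')^\perp)^2$ form a $2$-dimensional family (by the description of $S_f(5,2)$ recalled above), and a $2$-dimensional family inside the $4$-dimensional variety of rank-$2$ forms meets the $2$-dimensional family of binary forms in $x,y$ in dimension $0$. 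Hence the image is $15$-dimensional, and it is semi-algebraic by Tarski--Seidenberg. The step needing the most care is exactly this last one — showing that the $2$-parameter family of $13$-dimensional slices $\interior\Sigma(q^\perp)^2$ genuinely sweeps out all of $\R[x,y,z]_4$; should the dimension count prove delicate, one can alternatively fix a single explicit $f_0$ coming from the construction and check in a computer algebra system that the parametrizing map is submersive at the corresponding point, which already forces the image to contain a Euclidean-open set.
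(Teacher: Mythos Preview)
Your first two steps match the paper's proof essentially verbatim: choose a definite binary form $q\in\R[x,y]_2$, observe that $q\otimes q$ projects in $W$ to a positive multiple of $R_1$, and conclude via \cref{prop:ternary_quartics_rk_5_faces}(ii) and \cref{prop:dimension_face_spec} that for every $f\in\interior\Sigma(U_q^2)$ with $U_q=\spn(q)^\perp$ the face in direction $R_1$ is two-dimensional.

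Where you diverge is in the full-dimensionality argument, and there your reasoning has a genuine gap. Your fiber count requires that for generic $f$ only finitely many binary $q$ satisfy $f\in\interior\Sigma(U_q^2)$, which you justify by intersecting the $2$-dimensional family of rank-$2$ forms $q'$ attached to $f$ with the fixed $2$-dimensional family of binary forms inside the $4$-dimensional variety of rank-$2$ quadrics. But nothing forces these two surfaces to be in general position; the family of $q'$ depends on $f$ and could a priori contain a curve of binary forms. You flag this yourself and propose a computer check as a fallback.

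The paper avoids this entirely with a one-line disjointness argument: if $f\in\interior\Sigma(U_p^2)\cap\interior\Sigma(U_q^2)$ for two binary $p,q$, then the face of $\gram(f)$ in direction $R_1$ consists of tensors with image $U_p$ \emph{and} with image $U_q$, forcing $U_p=U_q$ and hence $[p]=[q]$. So the sets $\interior\Sigma(U_q^2)$ are pairwise disjoint as $[q]$ varies over the $2$-dimensional projective family, and the union has dimension $2+13=15$. This not only replaces your incidence count but in fact proves the stronger statement that exactly one (not merely finitely many) projective $q$ works for each $f$ in the union.
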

\begin{proof}
The face in direction $w$ is the same as the face in direction $\lambda w$ for any $\lambda\in\R_{>0}$.
Let $0\neq q\in\R[x,y]_2$ be a quadratic form. Then
\[
q\otimes q=\theta+\frac{\bil{R_1}{q\otimes q}}{\bil{R_1}{R_1}} R_1=\theta-3\disc(q)R_1
\]
where $\theta\in V$ is the unique Gram tensor with $\mu(\theta)=q^2$.

Let $q\in\R[x,y]_2$ with $\disc(q)<0$ and let $U_q:=\spn(q)^\perp\subset \Rx_2$. As $\rk(q)=2$ we know $\dim (U^2)=13$ by \cref{prop:ternary_quartics_rk_5_faces}. For every $f\in \interior \Sigma U^2$ the face of $\gram(f)$ in direction $\theta-3\disc(q) R_1$ has dimension 2, which is also the face in direction $R_1$. Consider the \sa\ set
\[
X=\bigcup \interior \Sigma U_q^2
\]
where the union is taken over all $q\in\P\R[x,y]_2$ with $\disc(q)<0$. We claim that this set has dimension 15. Assume $f\in\interior \Sigma (U_q)^2\cap\interior\Sigma (U_p)^2$. Then every Gram tensor on the face of $\gram(f)$ in direction $R_1$ has image $U_p$ and $U_q$, hence they are the same and $p=q$ (in $\P\R[x,y]_2$).
As there is a 2-dimensional set of such $q$ and $\dim\Sigma U_q^2=13$, we see that $\dim X=15$. 
\end{proof}

\begin{rem}
The proof also shows that the same holds in direction $-R_1$ by choosing $q\in\P\R[x,y]_2$ with positive discriminant. 

Moreover, note that $\disc(q)=0$ means $q$ is a rank 1 quadratic form and has the form $q=l^2$ for some linear form $l$. This means that every $f\in\Sigma U_q^2$ has a real zero corresponding to $l$ and $\dim (U_q)^2=12$. In this case the tensor $q\otimes q$ is contained in $V$.
\end{rem}

\begin{figure}
    \begin{center}
      \begin{minipage}{.48\textwidth}
      \includegraphics[scale=0.35]{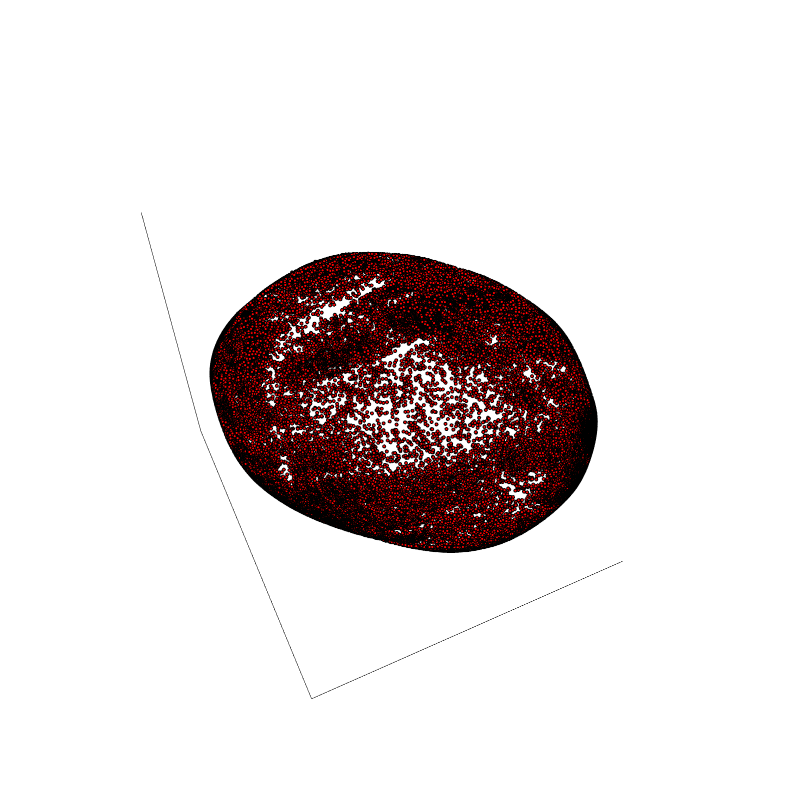}
      \end{minipage}
      \begin{minipage}{.48\textwidth}
      \includegraphics[scale=0.35]{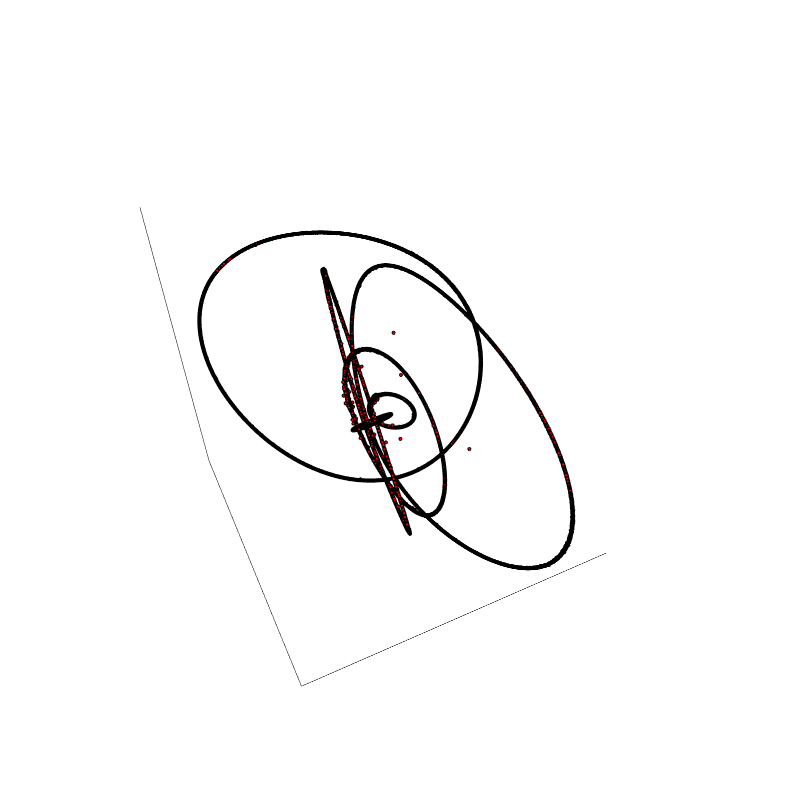}
      \end{minipage}
      \caption{Left: the 3-dimensional face of the fiber body in direction $R_1$. Right: six boundaries of the 2-dimensional face of a \gs\ in direction $R_1$.}
      \label{fig:ternary_quartic}
    \end{center}
  \end{figure}

\begin{prop}
Let $l_1,l_2\in\Rx_1$ linearly independent and let $w=\pm (l_1^2\otimes l_2^2-l_1l_2\otimes l_1l_2)\in W$. There is a full-dimensional \sa\ set of forms $f\in\Sigma_{3,4}$ such that the face of $\gram(f)$ in direction $w$ has dimension 2.
\end{prop}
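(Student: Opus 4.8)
The statement generalizes \cref{lem:ternary_quartics_direction_R1} from the special direction $R_1 = x^2\otimes y^2 - xy\otimes xy$ to any direction of the form $w = \pm(l_1^2\otimes l_2^2 - l_1l_2\otimes l_1l_2)$ with $l_1,l_2\in\Rx_1$ linearly independent. The natural approach is to reduce to the case already handled by a linear change of coordinates. First I would observe that the tensor $l_1^2\otimes l_2^2 - l_1l_2\otimes l_1l_2$ is precisely $q\otimes q - \theta$, up to a positive scalar, where $q = l_1l_2$ is a rank-$2$ quadratic form supported in the pencil $\spn(l_1,l_2)$, and $\theta\in V$ is the unique Gram tensor with $\mu(\theta) = q^2$; indeed $q\otimes q = \theta + \frac{\bil{R}{q\otimes q}}{\bil{R}{R}}R$ where $R := l_1^2\otimes l_2^2 - l_1l_2\otimes l_1l_2$, exactly as in the proof of \cref{lem:ternary_quartics_direction_R1} with $\disc(l_1l_2) < 0$ after rescaling (in suitable coordinates $\{l_1=x,\,l_2=y\}$ one has $q = xy$, $\disc(q) = -1/4 < 0$). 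So $w$ (for the $+$ sign) is the direction $q\otimes q - \theta$, which by \cref{lem:correspondence_functionals} gives, on any fiber, the same face as the direction $q\otimes q\in\Sym^N$; and $q\otimes q$ has rank $1$.

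The key step is then to apply \cref{prop:ternary_quartics_rk_5_faces}. Let $U_q := \spn(q)^\perp\subset\Rx_2$, a $5$-dimensional subspace whose orthogonal complement is spanned by the rank-$2$ quadratic form $q = l_1l_2$. By part (ii) of \cref{prop:ternary_quartics_rk_5_faces} (together with the preceding remark that rank $1$ is impossible for smooth $f$), for every smooth $f\in\interior\Sigma U_q^2$ the face of $\gram(f)$ in direction $q\otimes q$ — equivalently, by the above, in direction $w$ — is the set of Gram tensors with image $U_q$, and this face has dimension $\dim\sy{U_q} - \dim U_q^2 = 15 - 13 = 2$. Here $\dim U_q^2 = 13$ because $q$ has rank $2$. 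This gives the full-dimensionality: I would form $X := \bigcup \interior\Sigma U_q^2$ over all rank-$2$ forms $q$ in the pencil $l_1 l_2$ — but in fact $q = l_1 l_2$ is a \emph{fixed} form once $l_1,l_2$ are fixed, so there is no union to take; the set $\interior\Sigma U_q^2$ is itself a single semi-algebraic set of dimension $\dim U_q^2 = 13$. To get a \emph{full-dimensional} (dimension $15$) set one must vary: the correct statement is to take the union over all rank-$2$ quadratic forms $q$ such that $\spn(q)$ lies in a suitable open region — but since $w$ fixes $l_1$ and $l_2$, the honest reading is that one varies $q$ over the pencil $\R l_1^2 + \R l_1l_2 + \R l_2^2$ restricted to its rank-$2$ (equivalently $\disc < 0$, in the $+$ case) locus, a $2$-dimensional family, and by the same injectivity argument as in \cref{lem:ternary_quartics_direction_R1} (distinct $q$ give distinct faces, hence distinct images $U_q$, hence $\interior\Sigma U_q^2$ are disjoint) one concludes $\dim X = 13 + 2 = 15$.

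Actually the cleaner route — and the one I would present — is simply to reduce to \cref{lem:ternary_quartics_direction_R1} by symmetry: choose $g\in\gl_3(\R)$ with $g^*x = l_1$, $g^*y = l_2$; then $g$ acts on $\Rx_2$, hence on $\sy\Rx_2$, and sends $R_1 = x^2\otimes y^2 - xy\otimes xy$ to a positive multiple of $w$ (up to the relevant normalization). The Gram construction is $\gl_3$-equivariant in the sense that $\gram(g\cdot f) = g\cdot\gram(f)$ and the Gram map intertwines the two actions, and the change of coordinates on $\Rx_2$ is a (non-orthogonal, but linear) isomorphism preserving $W = \ker\mu$ and mapping faces to faces and dimensions of faces. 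Therefore the full-dimensional semi-algebraic set of $f$ produced in \cref{lem:ternary_quartics_direction_R1} for direction $R_1$, pushed forward by $g$, is a full-dimensional semi-algebraic set of forms for which the face in direction $w$ has dimension $2$. For the $-$ sign one uses the remark following \cref{lem:ternary_quartics_direction_R1}: in direction $-R_1$ the corresponding set is built from $q$ with $\disc(q) > 0$ (i.e. $q$ of rank $2$ but anisotropic over $\R$, no — rank $2$ with positive discriminant), and the same pushforward argument applies.

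The main obstacle I anticipate is bookkeeping the normalizations: the basis vectors $R_i$ are not unit vectors (we have $\bil{R_i}{R_i} = 3/4$ for $i=1,2,3$), the change of coordinates $g$ is not orthogonal so it distorts the apolarity pairing, and one must check that the \emph{open} semi-algebraic condition "$q$ has rank $2$" is exactly what survives the reduction — i.e. that $g$ really does map the rank-$2$ locus of the relevant pencil onto the rank-$2$ locus of $\spn(l_1,l_2)$, which it does since $g$ acts by linear automorphisms and rank is a $\gl$-invariant. None of this is deep, but it is where a careful write-up spends its effort; the conceptual content is entirely in \cref{prop:ternary_quartics_rk_5_faces}(ii) plus the equivariance of the Gram construction.
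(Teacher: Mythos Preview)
Your proposal is correct and, in its final form (the ``cleaner route'' via a linear change of coordinates $g\in\gl_3(\R)$ sending $x,y$ to $l_1,l_2$), is exactly the paper's own proof: reduce to \cref{lem:ternary_quartics_direction_R1} and invoke the remark following it for the sign $-R_1$. The longer direct argument you sketch first is unnecessary, and your anticipated bookkeeping obstacles (non-orthogonality of $g$, normalizations of the $R_i$) are indeed irrelevant since only the \emph{ray} $\R_{>0}w$ matters for determining a face and $g$ preserves $W=\ker\mu$ and face dimensions.
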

\begin{proof}
We already know this is true for $l_1=x,l_2=y$, $w=R_1$ by \cref{lem:ternary_quartics_direction_R1}. Applying a linear \cc\ shows the claim.
\end{proof}

\begin{rem}
\label{rem:normal_cones_quartic}
Let $f\in\Sigma_{3,4}$ smooth with a 2-dimensional rank 5 face $F$ on $\gram(f)$ with corresponding subspace $U$, i.e. $U=\im\theta$ for any relative interior point $\theta\in F$. Let $\spn(q)=U^\perp$. Then $\rk(q)=2$ and $q\in\R[l_1,l_2]_2$ for some $l_1,l_2\in\Rx_1$. We have seen that the normal cone at $F$ is then given by $\pm (l_1^2\otimes l_2^2-l_1l_2\otimes l_1l_2)$.
\end{rem}

\begin{thm}
\label{thm:ternary_quartics_3-dim-face-and-nc}
Let $w=\pm (l_1^2\otimes l_2^2-l_1l_2\otimes l_1l_2)$. The face of $\Sigma_\mu K$ in direction $w$ has dimension 3 and a 1-dimensional normal cone spanned by $w$.
\end{thm}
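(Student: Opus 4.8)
The plan is to reduce to $w=R_1$ by a linear change of coordinates, exactly as in the previous proposition (the sign $-R_1$, and general independent $l_1,l_2$, are handled identically, with $\disc q>0$ replacing $\disc q<0$ where appropriate). So I treat $w=R_1$. First I would identify the fibre faces $S_u^{R_1}=\gram(f)^{R_1}$ for almost every $f$. After deleting a set of measure zero (the non-generic forms, together with the thin set $\mathcal{T}$ of forms whose Gram spectrahedron carries a $1$-dimensional face), every $\gram(f)$ has only the faces listed in the classification theorem above, so $\gram(f)^{R_1}$ is either $0$-dimensional or a $2$-dimensional rank-$5$ face. In the latter case, writing $U$ for its associated subspace and $\spn(q)=U^\perp$, \cref{prop:ternary_quartics_rk_5_faces} gives $\rk q=2$; the normal cone of that face is $1$-dimensional and contains $R_1$, so it equals the ray $\R_{\ge 0}R_1$, and comparing $\im(l_1^2\otimes l_2^2-l_1l_2\otimes l_1l_2)=\R[l_1,l_2]_2$ with $\im R_1=\R[x,y]_2$ in \cref{rem:normal_cones_quartic} forces $q\in\R[x,y]_2$, while the sign of the $W$-component $-3\disc(q)R_1$ of $q\otimes q$ (as in \cref{lem:ternary_quartics_direction_R1}) forces $\disc q<0$. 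Then the direction $A_u$ of $\aff(S_u^{R_1})$ is $W\cap\sy{U_q}=\ker(\mu|_{\sy{U_q}})$, of dimension $\dim\sy{U_q}-\dim U_q^2=15-13=2$. Conversely, \cref{lem:ternary_quartics_direction_R1} exhibits a full-dimensional, hence positive-measure, set $X$ of forms $f$ for which $\gram(f)^{R_1}$ is exactly such a face.

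For the dimension of $F:=(\Sigma_\mu K)^{R_1}$ I would apply \cref{thm:dimension_face_fb}. The upper bound $\dim F\le 3$ is immediate: for almost every $u$ we have $A_u\subseteq\spn\{R_2,R_3,R_6\}$ — either $A_u=0$, or $A_u=W\cap\sy{U_q}$ with $q\in\R[x,y]_2$ of rank $2$, and one checks directly that $W\cap\sy{U_q}\subseteq\spn\{R_2,R_3,R_6\}$ for every such $q$ — so the spanning subspace $A$ produced by \cref{thm:dimension_face_fb} lies in $\spn\{R_2,R_3,R_6\}$. For the lower bound I would use that $X$ has positive measure, so any full-measure subset of $V$ meets $X$ in a positive-measure set; since $A_u$ varies semi-algebraically with $f$ on $X$ (the plane $U_{q(f)}$ being unique by the argument in the proof of \cref{lem:ternary_quartics_direction_R1}), the subset of $X$ on which the $A_u$ span less than their generic span is proper and lower-dimensional, hence the span over any full-measure subset of $X$ equals $\spn(W\cap\sy{U_q}:\disc q<0)$. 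This last span I would compute to be $\spn\{R_2,R_3,R_6\}$: for $q_0=x^2+y^2$ one gets $W\cap\sy{U_{q_0}}=\spn\{R_2-R_3,\,R_6\}$, and transporting under the $\gl_2$-action on $x,y$ — which sweeps $q_0$ over all $q$ with $\disc q<0$ up to scalar and is equivariant for the symmetric bilinear map $B(p_1,p_2)=p_1p_2\otimes z^2-(p_1z)\odot(p_2z)$ on $\spn(x,y)$, whose values $B(x,x)=R_2$, $B(y,y)=R_3$, $B(x,y)=R_6$ span $\spn\{R_2,R_3,R_6\}$ by polarization — one finds the transported span is all of $\spn\{R_2,R_3,R_6\}$. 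Hence $A=\spn\{R_2,R_3,R_6\}$ and $\dim F=3$.

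For the normal cone I would apply \cref{thm:normal_cone_fb}, which writes $\normal_{\Sigma_\mu K}(F)$ as the closure of $\bigcup_\oo\bigcap_{u\in\oo}\normal_{S_u}(S_u^{R_1})$ over full-measure sets $\oo$. On one hand $R_1\in\normal_{S_u}(S_u^{R_1})$ for every $u$, so $\R_{\ge 0}R_1$ is contained in each of these intersections. On the other hand any full-measure $\oo$ meets $X$ in a positive-measure set, and on $X$ one has $\normal_{S_u}(S_u^{R_1})=\R_{\ge 0}R_1$ by the first paragraph; hence each intersection, and therefore the whole closed union, equals $\R_{\ge 0}R_1$. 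Thus $\normal_{\Sigma_\mu K}(F)=\R_{\ge 0}R_1$ is $1$-dimensional and spanned by $w$.

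The \textbf{main obstacle} I anticipate is the measure-theoretic bookkeeping forced by \cref{thm:dimension_face_fb}: for the lower bound, making rigorous that discarding a null set from the positive-measure (full-dimensional semi-algebraic) family $X$ cannot decrease the generic span of the semi-algebraically varying planes $A_u$; and for the upper bound, ruling out — after excision of the measure-zero exceptional set — that $\gram(f)^{R_1}$ is a positive-dimensional face of any type other than a rank-$5$ $2$-dimensional face attached to some $q\in\R[x,y]_2$ with $\disc q<0$. The remaining ingredients — the identity $W\cap\sy{U_{q_0}}=\spn\{R_2-R_3,R_6\}$, the containments into $\spn\{R_2,R_3,R_6\}$, and the $\gl_2$-sweep — are explicit finite-dimensional linear-algebra computations.
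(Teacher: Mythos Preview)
Your proposal is correct and follows essentially the same route as the paper: reduce to $w=R_1$, identify the span of the fibre affine-hull directions $A_u$ as $\spn\{R_2,R_3,R_6\}$, and then invoke \cref{thm:dimension_face_fb} for the dimension and \cref{thm:normal_cone_fb} together with \cref{rem:normal_cones_quartic} for the normal cone. The paper writes this span directly as $\spn(z^2\otimes l^2-(zl)\otimes(zl):l\in\Rx_1)$ rather than via your $q_0$-computation plus $\gl_2$-sweep, and is terser on the measure-theoretic bookkeeping you flag as the main obstacle, but the arguments are equivalent.
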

\begin{proof}
We may assume $w=R_1$ after a \cc\ (or $w=-R_1$ but the argument stays the same). Consider the vector space
\[
W_1:=\spn(z^2\otimes l^2 -(zl)\otimes (zl)\colon l\in \Rx_1)
=\sum \ker(\mu|_{\sy{U_q}^2}\colon\sy{U_q}^2\to\Rx_4)
\]
where the sum is over all $q\in\R[x,y]_2$ with $\disc(q)<0$.

We claim that $W_1=\spn(R_2,R_3,R_6)$. All three $R_2,R_3,R_6$ have the form of the generators of $W_1$, and are therefore contained in $W_1$. On the other hand $R_1,R_4,R_5$ are in the orthogonal complement and thus the equality follows. 

One easily checks that removing a set of measure zero from $q\in\R[x,y]_2$, $\disc(q)<0$ does not alter the dimension, in fact any two generic kernels suffice already.

\cref{thm:dimension_face_fb} shows the claim about the dimension and \cref{rem:normal_cones_quartic}, \cref{thm:normal_cone_fb} show the statement about the normal cone.
\end{proof}

\begin{rem}
\cref{fig:ternary_quartic} shows the 3-dimensional face $(\Sigma_\mu K)^{R_1}$. For a full-dimensional subset of $\Sigma_{3,4}$ the face of $\gram(f)$ in direction $R_1$ is a 2-dimensional face whose affine hull (up to translation) is contained in the 3-dimensional space $W_1$. These 2-dimensional faces are rotated differently in this space which can be seen on the right in \cref{fig:ternary_quartic}. Taking the Minkowski sum of finitely many or constructing the fiber body then results in a 3-dimensional convex set.
\end{rem}

\begin{cor}
For every $0\neq w\in W$ that does not have the form in \cref{thm:ternary_quartics_3-dim-face-and-nc}, the face $(\Sigma_\mu K)^w$ is an extreme point.
\end{cor}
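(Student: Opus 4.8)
The plan is to show that for a generic direction $w\in W$ not of the special form, the face $(K_u)^w$ of almost every fiber $K_u = S_u = \mu^{-1}(u)\cap \sy^+{\Rx_2}$ is an extreme point, so that \cref{prop:fb_extreme_points} forces $(\Sigma_\mu K)^w$ to be an extreme point as well. The natural strategy, in the spirit of \cref{rem:completion} and the binary sextics case, is to complete $w$ to a low-rank psd Gram tensor: find $v\in V$ and a quadratic form $q\in\Rx_2$ with $q\otimes q = v+w$. If such a completion exists with $\rk(q)=3$, set $U:=\spn(q)^\perp$, which has $\dim U=5$ and $U^2 = \Rx_4$ by \cref{prop:ternary_quartics_rk_5_faces}(i); then for every $f\in\interior\Sigma U^2$ the face of $\gram(f)$ in direction $w$ is a rank $5$ extreme point, and since $\interior\Sigma U^2$ is \sa\ with non-empty interior, this happens on a set of positive measure of fibers. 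By \cref{prop:fb_extreme_points} (or \cref{thm:dimension_face_fb}(ii)) the face on the fiber body is then an extreme point.

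First I would analyze the image of the "completion map" $q\mapsto$ (the $W$-component of $q\otimes q$), i.e. the quadratic parametrization $\Rx_2\to W\cong\R^6$ sending $q$ to $\sum_i \frac{\bil{R_i}{q\otimes q}}{\bil{R_i}{R_i}}R_i$. This is a polynomial map from a $6$-dimensional space to a $6$-dimensional space, so its image is Zariski-dense (one must check the differential is generically of full rank, which is a finite computation that can be done by exhibiting one $q$ where it is). The complement of the image is therefore contained in a proper algebraic subset. Next I would argue that the locus where a completion of $w$ exists but only with $\rk(q)\le 2$ (forcing $U^\perp$ spanned by a rank $\le 2$ form, hence a $2$-dimensional or larger face by \cref{prop:ternary_quartics_rk_5_faces}(ii) and the rank-$1$ discussion) is exactly the closure of the special directions $\pm(l_1^2\otimes l_2^2 - l_1l_2\otimes l_1l_2)$ described in \cref{thm:ternary_quartics_3-dim-face-and-nc}, together with possibly a lower-dimensional set. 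The key point is that a $w$ of the stated special form is literally the $W$-component of $q\otimes q$ for $q=l_1 l_2$ a rank $2$ form (as in the proof of \cref{lem:ternary_quartics_direction_R1}, where $R_1 = \frac{W\text{-part of }q\otimes q}{-3\disc q}$ for $q\in\R[x,y]_2$), so these directions are genuinely exceptional and must be excluded; for every other $w$ one should be able to pick a rank $3$ completion.

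The main obstacle I expect is making the dichotomy precise: showing that \emph{every} $0\neq w\in W$ which is not of the special form admits a completion $q\otimes q = v+w$ with $\rk(q)=3$, rather than just a generic such $w$. For a generic $w$ this follows from dimension count, but the corollary is stated for all non-special $w$, so one needs to handle the boundary cases — directions where the only completions are of lower rank, or where no completion exists at all (recall from \cref{lem:binary_sextics_rk_1_completion} that in the binary sextic case there was a whole \sa\ region $S$ with no psd rank $1$ completion). I would address this by a semialgebraic analysis: parametrize completions by solving the system $\bil{R_i}{q\otimes q} = \bil{R_i}{w}\bil{R_i}{R_i}$ for $q=\sum a_i m_i$ (six quadratic equations in six unknowns $a_i$, with $v$ then determined), and show that whenever the real solution set is non-empty it contains a rank $3$ point unless $w$ lies in the excluded family. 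If some $w\ne 0$ admits no psd completion of any rank, one instead falls back on \cref{thm:normal_cone_fb}: it suffices to exhibit a positive-measure set of fibers on which the face in direction $w$ is an extreme point, and for this one can use completions over $\C$ together with perturbation, or directly invoke that the non-extreme-point directions on a generic $\gram(f)$ form a set of measure zero in $W$ (the normal cones of dimension $\ge 2$ come from the $3$-dimensional families $S_f(4,0)$ and the $8$ rank $3$ points and the $4$-dimensional $S_f(5,2)$, whose normal cones sweep out a proper subset), so for any fixed $w$ outside a measure-zero set the face is an extreme point on \emph{every} generic fiber, and \cref{thm:normal_cone_fb} concludes. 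Combining the explicit special family from \cref{thm:ternary_quartics_3-dim-face-and-nc} with this measure-theoretic argument should cover all remaining $w$.
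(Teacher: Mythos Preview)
Your approach via rank-$1$ completions is a significant detour and does not cover all cases, while the paper's proof is essentially two lines. The key point you are missing is that \cref{rem:normal_cones_quartic} has already computed the normal cone of \emph{every} positive-dimensional face on a generic $\gram(f)$: each such face is a $2$-dimensional rank~$5$ face with $1$-dimensional normal cone spanned by some $\pm(l_1^2\otimes l_2^2 - l_1l_2\otimes l_1l_2)$. Hence if $w$ is not of this form, it lies in no normal cone of any positive-dimensional face of $\gram(f)$ for any generic $f$, so $\gram(f)^w$ is an extreme point for all such $f$, and \cref{prop:fb_extreme_points} finishes. No completion is needed.

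Your completion strategy would only directly handle the case $\det Q(w)>0$ (where a rank-$3$ quadratic $q$ with $\pr_W(q\otimes q)\in\R_{>0}w$ exists, see \cref{prop:positive_det_nc}); when $\det Q(w)<0$ the projection lands on $\R_{<0}w$, so the face in direction $w$ is \emph{not} the one associated to $\spn(q)^\perp$, and you are stuck. The paper does use rank-$1$ and rank-$2$ completions later, but for the harder question of normal cone dimensions, not for the present corollary. Finally, your fallback argument has a quantifier slip: from ``for each generic $f$, the bad $w$'s have measure zero'' you cannot conclude ``each $w$ outside a measure-zero set is good for every generic $f$''; what makes the argument work is precisely that the bad set of $w$'s is the \emph{same} for every generic $f$, namely the special family, which is exactly the content of \cref{rem:normal_cones_quartic}.
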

\begin{proof}
By \cref{rem:normal_cones_quartic} the face $\gram(f)^w$ is an extreme point for generic $f\in\Sigma_{3,4}$ as it is not a 2-dimensional face and there are no other positive dimensional faces. Therefore, $(\Sigma_\mu K)^w$ is also an extreme point.
\end{proof}

What is left to show for \cref{thm:ternary_quartics_fb} is the statement about the dimension of the normal cones.
For this we first determine the projection of rank 1 tensors to $W$.

\begin{lem}
\label{lem:ternary_quartics_coords_projection}
Let $q=a_{200}x^2+a_{020}y^2+a_{002}z^2+a_{110}xy+a_{101}xz+a_{011}yz\in\Rx_2$ and let $\theta=q\otimes q$. Denote by $Q$ the symmetric $3\times 3$-matrix associated to the quadratic form $q$. Then
\[
Q^{\adj}=\begin{pmatrix}
a_{200} & \frac{1}{2} a_{110} & \frac{1}{2}a_{101} \\
\frac{1}{2}a_{110} & a_{020} & \frac{1}{2}a_{011} \\
\frac{1}{2}a_{101} & \frac{1}{2}a_{011} & a_{002}
\end{pmatrix}^{\adj}
=\begin{pmatrix}
\bil{\theta}{R_3}& -\bil{\theta}{R_6}& -\bil{\theta}{R_5}\\
-\bil{\theta}{R_6}& \bil{\theta}{R_2}& -\bil{\theta}{R_4}\\
-\bil{\theta}{R_5}& -\bil{\theta}{R_4}& \bil{\theta}{R_1}
\end{pmatrix}
\]
\end{lem}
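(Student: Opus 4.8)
The plan is a direct entry-by-entry comparison of the two symmetric $3\times 3$ matrices. The bridge is the polarization identity
\[
\bil{p\otimes p}{r\otimes s}=\bil{p}{r}\,\bil{p}{s}\qquad(p,r,s\in\Rx_2),
\]
which holds in $\sy\Rx_2$ when $r\otimes s$ is read as the symmetrized tensor (exactly as the $R_k$ are written). It follows from the defining properties of the scalar product on $\sy\Rx_2$: it is prescribed on squares by $\bil{p\otimes p}{r\otimes r}=\bil{p}{r}^2$ and extended bilinearly, so applying bilinearity to $2\,(r\otimes s)=(r+s)\otimes(r+s)-r\otimes r-s\otimes s$ yields the identity.

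First I would record the apolarity pairings of $q$ against the monomial basis of $\Rx_2$. From $\bil{f}{g}=\tfrac12 f(\partial)(g)$ one reads off immediately
\[
\bil{q}{x^2}=a_{200},\quad\bil{q}{y^2}=a_{020},\quad\bil{q}{z^2}=a_{002},\quad\bil{q}{xy}=\tfrac12 a_{110},\quad\bil{q}{xz}=\tfrac12 a_{101},\quad\bil{q}{yz}=\tfrac12 a_{011};
\]
in other words these six numbers are precisely the entries of the matrix $Q$ (with the mixed coefficients halved). Equivalently, writing $e_1=x,e_2=y,e_3=z$, one has $Q_{ij}=\bil{q}{e_ie_j}$ for all $i,j$.

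Next I would expand each $\bil{\theta}{R_k}=\bil{q\otimes q}{R_k}$ by bilinearity and apply the polarization identity to every term. For the three ``diagonal'' generators this yields
\[
\bil{\theta}{R_1}=Q_{11}Q_{22}-Q_{12}^2,\qquad\bil{\theta}{R_2}=Q_{11}Q_{33}-Q_{13}^2,\qquad\bil{\theta}{R_3}=Q_{22}Q_{33}-Q_{23}^2,
\]
which are exactly the $(3,3)$-, $(2,2)$- and $(1,1)$-cofactors of $Q$, i.e.\ the corresponding diagonal entries of $Q^{\adj}$. For the three ``mixed'' generators one gets
\[
\bil{\theta}{R_4}=Q_{11}Q_{23}-Q_{12}Q_{13},\qquad\bil{\theta}{R_5}=Q_{22}Q_{13}-Q_{12}Q_{23},\qquad\bil{\theta}{R_6}=Q_{33}Q_{12}-Q_{13}Q_{23},
\]
each of which is, up to a sign, the complementary $2\times 2$ minor of $Q$ at the positions $(2,3)$, $(1,3)$, $(1,2)$ respectively; a short sign check (combining the cofactor sign with the ordering of the two terms in each $R_k$) shows that in fact $(Q^{\adj})_{23}=-\bil{\theta}{R_4}$, $(Q^{\adj})_{13}=-\bil{\theta}{R_5}$, $(Q^{\adj})_{12}=-\bil{\theta}{R_6}$. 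Since $Q$, and hence $Q^{\adj}$, is symmetric, the remaining off-diagonal entries are determined, and assembling everything gives the claimed matrix identity.

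The computation is entirely routine; the only place an error could slip in is the sign and index bookkeeping in the last step — matching each generator $R_k$ to the correct complementary minor of $Q$ and checking that the overall sign agrees with the minus signs written in the statement. I expect this to be the main (but purely mechanical) obstacle; there is no conceptual difficulty beyond it.
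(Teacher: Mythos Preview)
Your proposal is correct and is precisely the direct computation the paper has in mind: the paper's own proof is the single sentence ``This is easily verified by hand,'' and you have spelled out exactly that verification via the polarization identity and the formula $Q_{ij}=\bil{q}{e_ie_j}$. The sign and index bookkeeping you flag as the only potential pitfall does check out, so there is nothing to add.
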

\begin{proof}
This is easily verified by hand.
\end{proof}

\begin{dfn}
For any $w\in W$, $w=\sum_{i=1}^6 \frac{\lambda_i}{\bil{R_i}{R_i}} R_i$, we define the corresponding quadratic form $Q(w)$ via the matrix
\[
Q(w):=\begin{pmatrix}
\lambda_3 & -\lambda_6 & -\lambda_5\\
-\lambda_6 & \lambda_2 & -\lambda_4\\
-\lambda_5 & -\lambda_4 & \lambda_1
\end{pmatrix}.
\]
\end{dfn}

To show that every face of $\Sigma_\mu K$ has a 1-dimensional normal cone we will consider several cases depending on the rank and the eigenvalues of $Q(w)$.

\begin{prop}[$\det Q(w)>0$]
\label{prop:positive_det_nc}
Let $w\in W$ such that $\det Q(w)>0$. Then the face of $\Sigma_\mu K$ in direction $w$ has a 1-dimensional normal cone.
\end{prop}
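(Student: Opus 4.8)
The plan is to reduce to a rank-one completion argument exactly parallel to the binary sextics case (cf.\ \cref{lem:binary_sextics_rk_1_completion} and \cref{prop:binary_sextics_1_dim_nc}). First I would observe that the condition $\det Q(w)>0$ is, by \cref{lem:ternary_quartics_coords_projection}, precisely the condition that $w$ is the projection to $W$ of a tensor $q\otimes q$ with $Q^{\adj}$ positive or negative definite, i.e.\ with $Q$ nonsingular of rank $3$; and since $\det(Q^{\adj}) = \det(Q)^2 > 0$ exactly means $\det Q \neq 0$, the hypothesis $\det Q(w)>0$ says that there is a nondegenerate ternary quadratic form $q$ (with the prescribed adjugate, up to the two sign choices of $Q$) such that $q\otimes q = v + w$ for the unique $v\in V$ with $\mu(q\otimes q)=q^2$. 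Concretely: given $\lambda_1,\dots,\lambda_6$ with $\det Q(w)>0$, set $Q = \pm\,\adj(Q(w))/\sqrt{\det Q(w)}$ (choosing the sign so that $Q$ is, say, positive definite — possible since $\adj$ of a definite matrix is definite), let $q$ be the quadratic form of $Q$, and note $\adj(Q) = Q(w)$ by the rank-$3$ adjugate identity $\adj(\adj Q) = \det(Q)\,Q$ together with the normalization; then \cref{lem:ternary_quartics_coords_projection} gives $\operatorname{pr}_W(q\otimes q) = w$.

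Next I would set $U := \spn(q)^\perp \subset \Rx_2$, a $5$-dimensional subspace. Since $q$ has rank $3$, \cref{prop:ternary_quartics_rk_5_faces}(i) shows $U^2 = \Rx_4$, so $\interior\Sigma U^2$ is a full-dimensional semi-algebraic subset of $\Sigma_{3,4}$. For every $f \in \interior\Sigma U^2$ the face of $\gram(f)$ in direction $q\otimes q$ — equivalently, by \cref{lem:correspondence_functionals}, in direction $w$ — is the Gram tensor of $f$ with image $U$, hence a rank $5$ extreme point; by \cref{thm:normal_cones} (or by the cited generic description) it has a $1$-dimensional normal cone. Now I would invoke \cref{prop:continuous_points_nc} on a generic such $f$: for smooth $f$ the rank $5$ extreme point and its normal cone vary continuously with $f$ in a neighbourhood, so the normal cone stays $1$-dimensional on a set of positive (indeed full) measure around $f$. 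Then \cref{thm:normal_cone_fb} — in the form of the remark following it, "if $w \notin N_u$ for all $u$ in a set of positive measure, then $w$ is not in the interior of a normal cone any larger than this" — forces $\normal_{\Sigma_\mu K}(F)$ to be $1$-dimensional, where $F = (\Sigma_\mu K)^w$.

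The main obstacle, and the point needing the most care, is verifying the hypotheses of \cref{prop:continuous_points_nc} for a suitably generic $f \in \interior\Sigma U^2$: namely that $f$ can be chosen smooth with only finitely many rank $5$ extreme points, that the intersection of $\aff(\gram(f))\otimes\C$ with the rank $\le 5$ locus is transversal, and that the lowest-degree Taylor term of the boundary polynomial at the rank $5$ point is irreducible of locally constant degree with a locally constant count of psd rank $5$ points. All of these are generic conditions on $f$ and hold on a dense open subset — this is essentially the content of the cited results from \cite{Vill21} and \cite{prss2004} on generic ternary quartic Gram spectrahedra — but I would spell out that genericity of $f$ within the full-dimensional family $\interior\Sigma U^2$ (not merely within $\Sigma_{3,4}$) suffices, since that family itself has nonempty interior. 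A secondary, purely bookkeeping point is the sign ambiguity in recovering $Q$ from $Q(w)$: both signs give the same $\spn(q)$ and hence the same $U$, so the argument is unaffected.
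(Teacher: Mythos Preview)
Your approach matches the paper's: complete $w$ to a rank-one tensor $q\otimes q$ via the adjugate and work with $U=\spn(q)^\perp$. Two simplifications are available. First, the paper simply takes $q$ to be the form with matrix $Q(w)^{\adj}$, so that $\pr_W(q\otimes q)=\det(Q(w))\,w$ is a positive scalar multiple of $w$; no square-root normalization is needed. Second, and more importantly, the entire detour through \cref{prop:continuous_points_nc} is unnecessary: you already observed that $U^2=\Rx_4$, which means $\interior\Sigma U^2$ is an open, hence positive-measure, subset of $\Sigma_{3,4}$ on which the face in direction $w$ has a $1$-dimensional normal cone. \cref{thm:normal_cone_fb} then applies immediately, with no need to verify any transversality or genericity hypotheses.

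One slip worth flagging: $\det Q(w)>0$ does not force $Q(w)$ to be definite (signature $(+,-,-)$ is allowed), so $\adj(Q(w))$ need not be definite and you cannot in general choose the sign to make $Q$ positive definite. Fortunately only $\rk q=3$ is used, so the argument is unaffected.
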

\begin{proof}
Let $A:=Q(w)^{\adj}$ and let $q\in\Rx_2$ be the associated quadratic form of $A$. Then $A^{\adj}=\det(Q(w))Q(w)$ and $\pr_W(q\otimes q)=\det(Q(w))w$ by \cref{lem:ternary_quartics_coords_projection}.
Hence, for every $f\in \interior\Sigma (\spn(q)^\perp)^2$ the face of $\gram(f)$ in direction $w$ is an extreme point with a 1-dimensional normal cone. Moreover, $U^2=\Rx_4$, hence \cref{thm:normal_cone_fb} implies the statement.
\end{proof}

\begin{rem}
This case is particularly easy because we can actually find a full-dimensional set in $\Sigma_{3,4}$ such that on every \gs\ the face in direction $w$ has a 1-dimensional normal cone. This wont be the case for other $w$.
\end{rem}

\begin{example}
\label{ex:1-dim-vanishing}
Let $U=\R[x,y]_2\oplus\spn(z^2)$, then $U^\perp=\spn(xz,yz)$. Let $f\in\interior\Sigma U^2$ and $q_1=2xz+2yz,\,q_2=2xz+4yz\in U^\perp$. The face of $\gram(f)$ in direction $\theta=q_1\otimes q_1+q_2\otimes q_2$ is a 1-dimensional face with image $U$. We calculate
\[
Q(\pr_W(\theta))=q_1^{\adj}+q_2^{\adj}=
\begin{pmatrix}
-1 & 1 & 0\\
1 & -1 & 0\\
0 & 0 & 0
\end{pmatrix}+
\begin{pmatrix}
-4 & 2 & 0\\1
2 & -1 & 0\\
0 & 0 & 0
\end{pmatrix}
=
\begin{pmatrix}
-5 & 3 & 0\\
3 & -2 & 0\\
0 & 0 & 0
\end{pmatrix}.
\]
This matrix has rank 2 ($\neq 1$) and therefore the face on $\Sigma_\mu K$ in direction $\pr_W(\theta)$ is an extreme point.
\end{example}

\begin{rem}
\label{rem:ternary_quartics_summary}
Let $w\in W$. If $\rk(Q(w))=1$ then the face in direction $w$ on $\Sigma_\mu K$ has dimension 3 and a 1-dimensional normal cone.
If $\det Q(w)$ is positive the face is an extreme point with a 1-dimensional normal cone.

Let $f\in\Sigma_{3,4}$ smooth. Let $F\subset\gram(f)$ be any rank 5 extreme point with corresponding subspace $U$ and orthogonal complement $\spn(q)=U^\perp$. Since $F$ is an extreme point $\rk(q)=3$. Let $Q$ be the associated symmetric matrix and let $A$ be its adjoint. Then $\det(A)\neq 0$ since $\rk(q)=3$ and $\det(A)=\det(Q)^2>0$. Hence the normal cone of $F$ is spanned by $w=\frac{4}{3} A_{33} R_1 +\frac{4}{3} A_{22} R_2 +\frac{4}{3} A_{11} R_3 -A_{23} R_4-A_{13}R_5-A_{12}R_6$.

Note however that for $w\in W$ with $\det Q(w)>0$, the face in direction $w$ on $\gram(f)$ is not necessarily a rank 5 point. This only follows for all $f\in\interior \Sigma U^2$ which is a full-dimensional \sa\ set but not all of $\Sigma_{3,4}$.
\end{rem}

Now we consider the other cases $\det Q(w)<0$ and $\rk Q(w)=2$. The difference is that we cannot find \gsa\ on which the faces in these directions have a 1-dimensional normal cone. Here, the faces always have a normal cone of dimension at least 3. However, we will see that nonetheless the normal cone of the fiber body in these directions has dimension 1.

\begin{prop}[$\det Q(w)<0$]
\label{prop:ternary_quartics_rk_2_completion}
Let $0\neq w\in W$ with $\det Q(w)<0$. Then there exists a full-dimensional \sa\ subset of $\Sigma_{3,4}$ such that for every $f$ in this subset the face of $\gram(f)$ in direction $w$ is a rank 4 extreme point.
\end{prop}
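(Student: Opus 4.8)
The goal is to show that a generic direction $w\in W$ with $\det Q(w)<0$ exposes a rank $4$ extreme point on a full-dimensional family of Gram spectrahedra. Guided by \cref{rem:completion}, the plan is to complete $w$ to a psd rank $4$ tensor $\theta=v+w\in V\oplus W$ and then use \cref{prop:dimension_face_spec} (in its Gram form, the Corollary after \cref{prop:dimension_face_spec}) to read off the face of $\gram(f)$ in direction $w$ for all $f$ in the relative interior of $\Sigma U^2$, where $U=\im(\theta)$ has dimension $5$. For this face to be a single point (an extreme point) one needs $\dim\Sym(U\otimes U)=\dim U^2$, i.e. $\binom{6}{2}=15=\dim U^2$, which by \cref{prop:ternary_quartics_rk_5_faces} holds exactly when $U^\perp$ is spanned by a rank $3$ quadratic form. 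Wait: rank $4$, not rank $5$. So the actual plan is to complete $w$ to a rank $4$ psd tensor $\theta$, set $U=\im(\theta)$ with $\dim U=4$, and invoke \cref{prop:dimension_face_spec}: for $u\in\pi(\interior F_U)$ the face has dimension $\binom{\dim U+1}{2}-\dim U^2=\binom{5}{2}-\dim U^2=10-\dim U^2$, so one wants $\dim U^2=10$, i.e. $U^2$ of maximal possible dimension among $4$-dimensional subspaces — generically this is the case for a generic $4$-dimensional $U$ (a count: $4$-dimensional subspaces of $\Rx_2$ contain $\binom{5}{2}=10$ independent products, and $10\le 15$, so genericity gives equality). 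Then $F_U$ has nonempty interior in its span, $\interior\Sigma U^2$ is full-dimensional in $U^2$, and the face is a point.

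The core computation is therefore the completion step: given $w$ with $\det Q(w)<0$, produce an explicit $v\in V$ with $v+w\succeq 0$ of rank $4$. Here I would exploit \cref{lem:ternary_quartics_coords_projection} and the definition of $Q(w)$. Note that $Q(w)$ has signature $(1,2)$ or $(2,1)$ (a rank $3$ indefinite form, since $\det<0$). The idea is to write $w$ as a projection $\pr_W(\theta_1+\theta_2)$ of a sum of two rank $1$ tensors $q_i\otimes q_i$, so that $\theta=v+w$ with $v\in V$ the unique tensor over $q_1^2+q_2^2$ and $\im\theta=\spn(q_1,q_2)^\perp$ — but that gives rank $\le 4$ only if $\spn(q_1,q_2)$ is $2$-dimensional; then $\dim\im\theta$ could be as large as... no. Let me reconsider: I want $\theta$ itself of rank $4$. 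The cleanest route: by \cref{lem:ternary_quartics_coords_projection}, $\pr_W(q\otimes q)$ corresponds to $Q^{\mathrm{adj}}$ under the identification with quadratic forms; so if I want $\pr_W$ of something to be $w\leftrightarrow Q(w)$, I should look for a quadratic form $q$ with $q^{\mathrm{adj}}=Q(w)$, equivalently (when invertible) $q=\det(Q(w))^{-1}Q(w)^{\mathrm{adj}}$. Since $\det Q(w)<0$, this $q$ has $\det q<0$ too (as $\det(q^{\mathrm{adj}})=\det(q)^2$ forces consistency of signs via $\det(Q(w)^{\mathrm{adj}})=\det Q(w)^2$, and scaling by the negative $\det Q(w)$ flips signature), so $q$ is an indefinite nondegenerate rank $3$ quadratic form. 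Then $q\otimes q\in V\oplus W$ projects to $w$ up to the positive scalar $\det Q(w)^2/(\text{something})$ — here I must be careful with the normalization constants $\bil{R_i}{R_i}$, which is a routine but fiddly check. However $q\otimes q$ has rank $1$, not $4$. So instead of a single rank $1$ completion I need a genuinely rank $4$ psd completion; the strategy is to perturb: take $\theta_0\in V$ any rank $5$ psd Gram tensor and consider $\theta_t=\theta_0+t\,w$; for a suitable direction and small $t$ this stays psd, and I want to arrange that at the relevant $t$ it drops to rank $4$. The existence of such a completion is equivalent to $w$ lying in the normal cone of a rank $4$ face of \emph{some} $\gram(f)$, which is what the theorem asserts — so I cannot merely cite it, I must construct it.

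A more robust approach, and the one I would actually pursue: reduce to a normal form. The group $\gl_3(\R)$ acts on ternary quartics and carries $W$ to itself, acting on $Q(w)$ by congruence (this should follow from the transformation behavior in \cref{lem:ternary_quartics_coords_projection}); since $\det Q(w)<0$ is preserved and congruence classes of rank $3$ indefinite forms are a single orbit up to positive scalar, it suffices to treat one representative, e.g. $Q(w)=\diag(1,1,-1)$, giving a single explicit $w_0=\tfrac{4}{3}(-1)R_1+\tfrac{4}{3}R_2+\tfrac{4}{3}R_3$ (signs per \cref{rem:ternary_quartics_summary}). For this concrete $w_0$ I would, as in \cref{lem:binary_sextics_ex_non_intersection} and \cref{ex:1-dim-vanishing}, exhibit by direct computation (in \texttt{Mathematica} or by hand) an explicit psd rank $4$ tensor $\theta$ with $\pr_W(\theta)=w_0$: concretely, pick linearly independent $q_1,\dots$ and form $\sum q_i\otimes q_i$ spanning a $4$-dimensional image $U$ with $\pr_W(\sum q_i\otimes q_i)\in\R_{>0}\cdot w_0$ and $\dim U^2=10$; then apply the Corollary after \cref{prop:dimension_face_spec} to conclude the face of $\gram(f)$ in direction $w_0$ is a point for all $f\in\interior\Sigma U^2$, and this is rank $4$ by construction. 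Pulling back by $\gl_3$ gives the full-dimensional \sa{} subset of $\Sigma_{3,4}$ for arbitrary $w$ with $\det Q(w)<0$.

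The main obstacle is the explicit rank $4$ completion with the correct image dimension $\dim U^2=10$: one must verify both that the psd rank $4$ tensor with prescribed $W$-projection exists (not automatic — the fibers of $\pr_W$ restricted to rank-$\le 4$ psd tensors could miss $\R_{>0}w_0$) and that a generic such $U$ achieves $\dim U^2=10$ so that $\interior\Sigma U^2$ is full-dimensional. Both are finite explicit checks once a normal form is fixed, but getting the normalization constants $\bil{R_i}{R_i}$ and the $\gl_3$-congruence action exactly right is where the care is needed; I would defer the arithmetic to a computer algebra verification as the paper does elsewhere.
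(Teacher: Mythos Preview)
Your proposal has two genuine gaps, one conceptual and one about dimension.

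\textbf{Wrong rank of the completion.} You want to complete $w$ to a psd tensor $\theta$ of rank $4$ with image $U$ and then assert that for $f\in\interior\Sigma U^2$ the face $\gram(f)^w$ is the rank~$4$ point with image $U$. This gets the face/direction correspondence backwards. If $\theta=v+w\succeq 0$ with $\im(\theta)=U$, then minimizing $\bil{\theta}{\cdot}$ over psd matrices picks out those $G$ with $\im(G)\subset\ker(\theta)=U^\perp$, \emph{not} $\im(G)\subset U$. So a rank~$4$ completion exposes a corank~$4$ (i.e.\ rank $\le 2$) face, which for smooth $f$ is empty and hence tells you nothing via \cref{prop:dimension_face_spec}. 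To get a rank~$4$ face you need a \emph{rank~$2$} psd completion: $\theta$ with $\im(\theta)$ two-dimensional, so that $U:=\im(\theta)^\perp$ is four-dimensional and the face in direction $w$ has image in $U$. The paper does exactly this: it writes $Q(w)=Q_1+Q_2$ with $\det Q_i>0$, takes $q_i$ the quadratic form with matrix $Q(w_i)^{\adj}$, and forms the rank~$2$ psd tensor $\frac{1}{\det Q(w_1)}q_1\otimes q_1+\frac{1}{\det Q(w_2)}q_2\otimes q_2$ projecting to $w$. Ironically, you briefly floated ``write $w$ as $\pr_W(\theta_1+\theta_2)$ for two rank~$1$ tensors'' and then abandoned it; that was the right thread.

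\textbf{Missing the family of completions.} Even with a correct rank~$2$ completion giving $U=\spn(q_1,q_2)^\perp$ with $\dim U^2=10$, the set $\interior\Sigma U^2$ has dimension $10$, not $15$: it is full-dimensional in $U^2$, not in $\Rx_4$. A single $U$ therefore cannot give a full-dimensional subset of $\Sigma_{3,4}$, and your $\gl_3$ normal-form reduction does not help, because it moves $w$ rather than producing new $U$'s for the same $w$. The paper handles this by varying the decomposition $Q(w)=Q_1+Q_2$ over a full-dimensional set $\oo$ of $Q_1$'s, forming $X=\bigcup_{q\in\oo}\interior\Sigma U_q^2$, and then doing a genuine dimension count: each $f$ lies in at most one $\interior\Sigma U_q^2$, and for generic $U$ the set of decompositions giving that same $U$ is one-dimensional (checked via a \texttt{Sage} computation on the span $\spn(Q_1^{\adj},(Q(w)-Q_1)^{\adj})$ after simultaneous diagonalization). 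This yields $\dim X=6-1+10=15$. Your proposal contains no analogue of this step, and the claim ``$\interior\Sigma U^2$ is full-dimensional'' is simply false for $\dim U=4$.
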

\begin{proof}
First, by diagonalizing it is easy to see that $Q(w)$ can be written as $Q(w)=Q_1+Q_2$ with $\det Q_i>0$ ($i=1,2$). Perturbing $Q_1$ we have $Q(w)=Q_1'+(Q(w)-Q_1')$ and still both matrices have positive determinant. Hence, there exists a full-dimensional subset $\oo$ of the set of symmetric $3\times 3$-matrices such that $Q(w)=Q_1+Q_2$ with $Q_1\in\oo$ and $\det Q_i>0$ ($i=1,2$).

For any such decomposition $Q(w)=Q(w_1)+Q(w_2)$ we have 
\[
\pr_W\left(\frac{1}{\det(Q(w_1))} q_1\otimes q_1+\frac{1}{\det(Q(w_2))} q_2\otimes q_2\right)=w_1+w_2=w
\]
where $q_i$ is the quadratic form associated to $Q(w_i)^{\adj}$ ($i=1,2$) as in \cref{prop:positive_det_nc}. Let $U=\spn(q_1,q_2)^\perp$, then for any $f\in\interior\Sigma U^2$ the face of $\gram(f)$ in direction $w$ is the rank 4 face corresponding to the subspace $U$. Since $\rk(q_i)=3$ it follows that $\dim U^2=10$ and this rank 4 face is an extreme point: Indeed, let $\spn(q)=\spn(q_1)^\perp\subset \spn(q_1,q_2)$. Then $(U\oplus\spn(q))^2=\Rx_4$ by \cref{prop:ternary_quartics_rk_5_faces}, i.e. the map $\sy{(U\oplus\spn(q))}\stackrel{\mu}{\to} \Rx_4$ is injective. Hence, $\sy{U}\to \Rx_4$ is also injective. We write $U_{q_1}:=U$ (or $U_{q_2}$). 
Similar to \cref{lem:ternary_quartics_direction_R1} we consider the set
\[
X=\bigcup_{q\in\oo} \interior\Sigma U_q^2
\]
We claim that $\dim X=15$. As in the proof of \cref{lem:ternary_quartics_direction_R1} every $f$ is contained in $\interior\Sigma U_q^2$ for at most one $q\in\oo$. Hence, for a generic $U=U_q$ there exists at least a 1-dimensional family of $q'\in\oo$ with $U=U_{q'}$. We need to show that this family has dimension exactly 1.

We need to check when two tensors of the form $\frac{1}{\det(Q(w_1))} q_1\otimes q_1+\frac{1}{\det(Q(w_2))} q_2\otimes q_2$ as above have the same image.
The image is given by 
\[
\spn(q_1,q_2)=\spn(Q(w_1)^{\adj},(Q(w)-Q(w_1))^{\adj}).
\]
We study two cases separately, depending on the number of positive eigenvalues of $Q(w)$.

(i): Assume $Q(w)$ has 2 positive eigenvalues $d_1,d_2$ and a negative eigenvalue $d_3$. After diagonalizing we can write
\[
\begin{pmatrix}
d_1 & 0 & 0\\
0 & d_2 & 0\\
0 & 0 & d_3
\end{pmatrix}
=
\begin{pmatrix}
\frac{d_1}{2} & 0 & 0\\
0 & 2d_2 & 0\\
0 & 0 & -d_3
\end{pmatrix}
+
\begin{pmatrix}
\frac{d_1}{2} & 0 & 0\\
0 & -d_2 & 0\\
0 & 0 & 2d_3
\end{pmatrix}
\]
which gives a decomposition $Q(w)=Q_1+Q_2$ with $\det Q_i>0$ ($i=1,2$) and $Q_1$ positive definite.
Since $Q(w_1)$ is positive definite there exists a change of coordinates (as quadratic forms) such that the image is given by $\spn\left(I,(D-I)^{-1}\right)$
where $I$ is the identity matrix and $D$ is some diagonal matrix. This space consists entirely of diagonal matrices. Therefore, if $\spn(q_1',q_2')=\spn(q_1,q_2)$ after the \cc\ the space is also spanned by two diagonal matrices and is given by $\spn\left(D_1,(D-D_1)^{-1}\right)$ with $D_1$ diagonal. We now parametrize the matrices $D,D_1$ and check in \texttt{Sage} \cite{sagemath} that for generic $D$, there is a 1-dimensional family of such diagonal matrices $D_1$.
This shows that the dimension of $X$ is indeed 15.

Thus there is a full-dimensional subset of $\Sigma_{3,4}$ for which the face in direction $w$ on $\gram(f)$ is a rank 4 extreme point, namely $X$.

(ii): Assume that $Q(w)$ has 3 negative eigenvalues $d_1,d_2,d_3$. We then get a decomposition
\[
\begin{pmatrix}
d_1 & 0 & 0\\
0 & d_2 & 0\\
0 & 0 & d_3
\end{pmatrix}
=
\begin{pmatrix}
2d_1 & 0 & 0\\
0 & -d_2 & 0\\
0 & 0 & \frac{d_3}{2}
\end{pmatrix}
+
\begin{pmatrix}
-d_1 & 0 & 0\\
0 & 2d_2 & 0\\
0 & 0 & \frac{d_3}{2}
\end{pmatrix}
\]
into two matrices with $\det Q_i>0$ ($i=1,2$) but both not positive definite. We have 
\[
\spn(Q(w_1)^{\adj},(Q(w)-Q(w_1))^{\adj})=\spn(Q(w_1)^{\adj},(-Q(w)+Q(w_1))^{\adj}).
\]
Now $-Q(w)$ is positive definite and we can find a \cc\ that makes $-Q(w)$ the identity and diagonalizes $Q(w_1)$. Hence, we have to consider the space $\spn(D^{-1},(I+D)^{-1})$. We now use the same argument as in case (i) and are finished.
\end{proof}

\begin{rem}[$\rk Q(w)=2$]
Now let $\rk Q(w)=2$. Again we separately consider the cases depending on the signs of the eigenvalues. If at least one eigenvalue is positive we get a decomposition where one matrix is positive definite. Indeed, if $d_1,d_2>0$
\[
\begin{pmatrix}
d_1 & 0 & 0\\
0 & d_2 & 0\\
0 & 0 & 0
\end{pmatrix}
=
\begin{pmatrix}
\frac{d_1}{2} & 0 & 0\\
0 & 2d_2 & 0\\
0 & 0 & 1
\end{pmatrix}
+
\begin{pmatrix}
\frac{d_1}{2} & 0 & 0\\
0 & -d_2 & 0\\
0 & 0 & -1
\end{pmatrix}
\]
and if $d_1>0,\, d_2<0$ we have 
\[
\begin{pmatrix}
d_1 & 0 & 0\\
0 & d_2 & 0\\
0 & 0 & 0
\end{pmatrix}
=
\begin{pmatrix}
\frac{d_1}{2} & 0 & 0\\
0 & -d_2 & 0\\
0 & 0 & 1
\end{pmatrix}
+
\begin{pmatrix}
\frac{d_1}{2} & 0 & 0\\
0 & 2d_2 & 0\\
0 & 0 & -1
\end{pmatrix}.
\]
Again we check with \texttt{Sage} that the fibers have the correct dimension. We are left with the case where $Q(w)$ is negative semidefinite. As in (ii) we make $-Q(w)$ a diagonal matrix with two 1's on the diagonal. In this case we cannot diagonalize any other matrix, so we fully parametrize the two symmetric matrices and solve the problem in \texttt{Sage}. The fibers still have dimension 1.
\end{rem}

\begin{rem}
We briefly explain how to check the fiber dimension in \texttt{Sage}. We do this in case (i) only, the rest works analogously. Consider the set 
\[
\left\lbrace(D,D_1)\in (\R^3)^2\colon \spn\left(I,(D-I)^{\adj}\right)=\spn\left(D_1^{\adj},(D-D_1)^{\adj}\right)\right\rbrace.
\]
Its Zariski-closure is defined by the ideal of $3\times 3$ minors of the matrix containing $I,(D-I)^{\adj},D_1^{\adj},(D-D_1)^{\adj}$ after writing them into a $4\times 9$ matrix. The dimension of this variety is 4 and the projection to the second factor is dense in $\R^3$. Hence, a generic fiber has dimension 1, i.e. for a generic $D$ there exists a 1-dimensional set of $D_1$ such that the two subspaces are equal.
\end{rem}

We can now easily see that the dimension of the normal cones of faces of $\Sigma_\mu K$ is at most 3.

\begin{cor}
Every boundary point of $\Sigma_\mu K$ has a normal cone of dimension at most 3.
\end{cor}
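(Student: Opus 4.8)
The plan is to reduce the statement to the exposed faces $(\Sigma_\mu K)^w$, $0\neq w\in W$, and then go through the three cases distinguished by the quadratic form $Q(w)$, invoking the results just established.

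First I would record the elementary fact that if $F\subseteq G$ are faces of a closed convex set $C$, then $\normal_C(G)$ is a face of $\normal_C(F)$: if $u_1,u_2\in\normal_C(F)$ and $u_1+u_2\in\normal_C(G)$, then comparing the values of $\bil{u_1}{\cdot}$, $\bil{u_2}{\cdot}$ and $\bil{u_1+u_2}{\cdot}$ at a point of $G$ and at a point of $F$ forces each of $\bil{u_1}{\cdot}$, $\bil{u_2}{\cdot}$ to be minimal over $C$ on all of $G$. Given a boundary point $z$ of $\Sigma_\mu K$ with supporting face $F$, we have $\normal_{\Sigma_\mu K}(z)=\normal_{\Sigma_\mu K}(F)=:N\neq\{0\}$; pick $0\neq w$ in the relative interior of $N$. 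Since $w\in N$, the point $z$ minimizes $\bil{w}{\cdot}$ over $\Sigma_\mu K$, so $z\in G:=(\Sigma_\mu K)^w$ and hence $F\subseteq G$. By the fact above, $\normal_{\Sigma_\mu K}(G)$ is a face of $N$ containing the relative interior point $w$, so $\normal_{\Sigma_\mu K}(G)=N$. It therefore suffices to bound $\dim\normal_{\Sigma_\mu K}\big((\Sigma_\mu K)^w\big)$ for an arbitrary $0\neq w\in W$.

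Because $w\neq0$ the matrix $Q(w)$ is nonzero, so exactly one of $\rk Q(w)=1$, $\det Q(w)>0$, $\det Q(w)<0$, $\rk Q(w)=2$ holds. If $\rk Q(w)=1$, \cref{thm:ternary_quartics_3-dim-face-and-nc} gives a $1$-dimensional normal cone; if $\det Q(w)>0$, \cref{prop:positive_det_nc} does the same. In the remaining case I would argue by contradiction: assume $N':=\normal_{\Sigma_\mu K}\big((\Sigma_\mu K)^w\big)$ has dimension at least $4$ and pick linearly independent $w_1,\dots,w_4$ in its relative interior. By \cref{prop:ternary_quartics_rk_2_completion} (respectively the remark for $\rk Q(w)=2$) there is a full-dimensional \sa\ set $Y\subseteq\Sigma_{3,4}$ on which the face of $\gram(f)$ in direction $w$ is a rank $4$ extreme point; intersecting $Y$ with the still full-dimensional set of smooth $f$ that are generic in the sense of the structure theorem recalled at the beginning of this section, I may also assume that this extreme point lies in $S_f(4,0)$, hence has a $3$-dimensional normal cone. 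Writing $Y$ also for the corresponding subset of $V$, we get $\dim\normal_{S_u}(S_u^w)=3$ for all $u\in Y$, and $Y$ has positive Lebesgue measure. On the other hand, the proof of \cref{thm:normal_cone_fb} shows that the relative interior of $N'$ is contained in $\bigcup_\oo\bigcap_{u\in\oo}\normal_{S_u}(S_u^w)$, so each $w_i$ lies in $\normal_{S_u}(S_u^w)$ for almost all $u$. Discarding the union of the four corresponding null sets still leaves a set of full measure, which meets $Y$; at any $u_0$ in this intersection all of $w_1,\dots,w_4$ lie in the $3$-dimensional cone $\normal_{S_{u_0}}(S_{u_0}^w)$, contradicting linear independence. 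Hence $\dim N'\le3$.

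The step I expect to be the main obstacle is the reduction in the second paragraph: since the faces of the fiber body are not known to be exposed, one must make sure that replacing the (possibly non-exposed) supporting face $F$ by the exposed face $G=(\Sigma_\mu K)^w$ does not change the normal cone, which is exactly the role of the ``$\normal_C(G)$ is a face of $\normal_C(F)$'' observation together with the choice of $w$ in the relative interior of $N$. A secondary subtlety is that \cref{thm:normal_cone_fb} quantifies over co-null subsets of $V$ whereas \cref{prop:ternary_quartics_rk_2_completion} only yields a full-dimensional semialgebraic set; the gap is bridged by the remark that such a set has positive measure and therefore meets every co-null set, which is what lets us select the single fiber $u_0$.
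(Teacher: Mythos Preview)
Your argument is correct and in fact a bit cleaner than the paper's. Both proofs agree in the cases $\rk Q(w)=1$ and $\det Q(w)>0$, and for $\det Q(w)<0$ the paper also appeals to \cref{prop:ternary_quartics_rk_2_completion} together with \cref{thm:normal_cone_fb}; your contradiction with four linearly independent $w_i$ is exactly the argument the paper has in mind but leaves implicit. The genuine divergence is in the case $\rk Q(w)=2$: the paper does \emph{not} use the rank~4 completion here but instead observes that if none of the other cases applies to any $w$ in the relative interior of $N$, then all of $N$ lies in the determinantal hypersurface $\{\det Q(\cdot)=0\}$, and invokes the result of Chan--Ilten that the Fano scheme of the symmetric $3\times3$ determinant contains no linear spaces of dimension $\ge 4$. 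Your route avoids this external reference entirely by treating $\rk Q(w)=2$ on the same footing as $\det Q(w)<0$, via the Remark following \cref{prop:ternary_quartics_rk_2_completion}; this is more uniform and stays inside the machinery already developed in the paper. You also make explicit the reduction from an arbitrary boundary point (whose supporting face need not be exposed) to an exposed face $(\Sigma_\mu K)^w$, via the observation that $\normal_C(G)$ is a face of $\normal_C(F)$ whenever $F\subseteq G$; the paper simply starts from an exposed face without comment, so your version is more complete on this point.
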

\begin{proof}
Let $F\subset\Sigma_\mu K$ be an exposed face with normal cone $N$ and let $w\in N$. If $\rk Q(w)=1$ or $\det Q(w)>0$, $F$ has a 1-dimensional normal cone by \cref{rem:ternary_quartics_summary}. If $\det Q(w)<0$ it follows from \cref{prop:ternary_quartics_rk_2_completion} and \cref{thm:normal_cone_fb} that the normal cone has dimension at most 3. Thus we may now assume that every $w\in N$ satisfies $\rk Q(w)=2$.
By \cite{ci2015} the Fano variety of the determinant of symmetric $3\times 3$ matrices only contains 3-dimensional vector spaces at most and no 4-dimensional. Therefore, the dimension of $N$ is at most 3.
\end{proof}

\begin{prop}[$\det Q(w)<0,\, \rk Q(w)=2$]
Let $0\neq w\in W$ with $\det Q(w)<0$ or $\rk Q(w)=2$. The normal cone of $(\Sigma_\mu K)^w$ has dimension 1.
\end{prop}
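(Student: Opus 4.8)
The plan is to read this off \cref{thm:normal_cone_fb}. Write $F:=(\Sigma_\mu K)^w$, $N:=\normal_{\Sigma_\mu K}(F)$, and $N_f:=\normal_{\gram(f)}(\gram(f)^w)\subset W$ for a form $f$. The proof of \cref{thm:normal_cone_fb} shows that every $w''$ lying in the relative interior of $N$ satisfies $w''\in N_f$ for almost all $f$. As $0\ne w\in N$, in order to conclude $\dim N=1$ it is therefore enough to prove: for every $w'\in W$ that is not a scalar multiple of $w$ there is a set of forms $f$ of positive measure with $w'\notin N_f$. Indeed such a $w'$ then cannot lie in the relative interior of $N$, so that relative interior is contained in $\R w$ and $\dim N\le 1$.

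Fix such a $w'$. By \cref{prop:ternary_quartics_rk_2_completion} in the case $\det Q(w)<0$, and by the analogous decomposition of $Q(w)$ together with the \texttt{Sage} computation recorded above in the case $\rk Q(w)=2$, there is a full-dimensional semialgebraic set $X\subset\Sigma_{3,4}$ such that for every $f\in X$ the face $\theta_f:=\gram(f)^w$ is a rank $4$ extreme point; put $U_f:=\im\theta_f$ and $P_f:=U_f^\perp\subset\Rx_2$, a $2$-plane. Since $\theta_f$ is a rank $4$ extreme point, $N_f$ is $3$-dimensional, and by \cref{thm:normal_cones} its dimension depends only on $U_f$. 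I now bound $N_f$ inside a linear subspace. For a $2$-plane $P\subset\Rx_2$ set $L_P:=\pr_W(\sy{P})\subset W$; this subspace has dimension $3$ unless $P$ contains a rank $1$ conic, i.e.\ unless $\sy{P}$ meets $V=\ker\pr_W$, and for the pencils $P_f$ arising here this happens only over a lower-dimensional subset of $X$, since the rank $1$ conics form a surface in $\P\Rx_2=\P^5$. If $0\ne\psi\in\sy{P_f}$ with $\psi\succeq0$, then $\pr_V(\psi)\ne0$ (because $W$ contains no nonzero positive semidefinite tensor) and $\bil{\theta_f}{\psi}=0$, since $\im\theta_f=U_f$ is orthogonal to $P_f\supset\im\psi$; as $\bil{\cdot}{\psi}$ is nonnegative on $\gram(f)$, this value is its minimum, so \cref{lem:correspondence_functionals} yields $\theta_f\in\gram(f)^{\pr_W(\psi)}$, that is $\pr_W(\psi)\in N_f$. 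Hence $\pr_W(\{\psi\in\sy{P_f}:\psi\succeq0\})\subset N_f$; the linear span of the cone on the left is $L_{P_f}$, so for generic $f\in X$ — where $\dim L_{P_f}=3=\dim N_f$ — we obtain $\spn(N_f)=L_{P_f}$, whence $N_f\subset L_{P_f}$. Moreover the rank $2$ tensor $\psi_f$ constructed in \cref{prop:ternary_quartics_rk_2_completion} has $\pr_W(\psi_f)=w$, so $w\in L_{P_f}$ for these $f$.

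It thus suffices to exhibit a positive-measure set of $f\in X$ with $w'\notin L_{P_f}$. As the decomposition $Q(w)=Q(w_1)+Q(w_2)$ of \cref{prop:ternary_quartics_rk_2_completion} ranges over its $6$-dimensional open parameter space, the pencil $P_f=\spn\bigl(Q(w_1)^{\adj},(Q(w)-Q(w_1))^{\adj}\bigr)$ sweeps out a semialgebraic family $\mathcal P$ of $2$-planes of dimension $5$ (the fibres of the assignment $Q(w_1)\mapsto P_f$ being $1$-dimensional, as shown there), and $f\mapsto P_f$ is semialgebraic and dominant onto $\mathcal P$. For generic $P\in\mathcal P$ the subspace $L_P$ is $3$-dimensional and contains the fixed vector $w$, so for two generic $P,P'\in\mathcal P$ one has $L_P\cap L_{P'}\supset\R w$, and the key claim is that this is an equality, equivalently that $\dim(L_P+L_{P'})=5$. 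Granting it, $\bigcap_{P\in\mathcal P}L_P=\R w$, so some $P\in\mathcal P$ satisfies $w'\notin L_P$; since $\{P\in\mathcal P:w'\notin L_P\}$ is relatively open, $L_{P_f}$ depends continuously on $f$, and $f\mapsto P_f$ is dominant, the set $\{f\in X:w'\notin L_{P_f}\}$ contains a nonempty open subset of $X$ and therefore has positive measure. By the first paragraph $w'$ is not in the relative interior of $N$; since $w'$ was an arbitrary vector not proportional to $w$, we conclude $\dim N=1$. The cases $\det Q(w)<0$ and $\rk Q(w)=2$ are treated identically, only the explicit decomposition of $Q(w)$ and the \texttt{Sage} check changing.

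The step I expect to be the real obstacle is the claim $\dim(L_P+L_{P'})=5$ for generic $P,P'\in\mathcal P$. Since every $L_P$ is pinned to contain the single vector $w$, the two $3$-planes are never in general position in $W\cong\R^6$ (their sum has dimension at most $5$), and one must check that the remaining degree of freedom is genuinely available — equivalently that the members of the family $\{L_P\}_{P\in\mathcal P}$ do not all contain a common $2$-plane. Because $\dim(L_P+L_{P'})\ge 5$ is an open condition on $(P,P')$ while $\le 5$ always holds, it suffices to verify it for one explicit pair, which is a finite linear-algebra computation in the basis $R_1,\dots,R_6$ of the same kind as those in \cref{prop:ternary_quartics_rk_2_completion} and which I would run in \texttt{Sage}; everything else — the reduction through \cref{thm:normal_cone_fb} and the identification $\spn(N_f)=L_{P_f}$ — is soft.
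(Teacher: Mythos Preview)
Your approach is correct and genuinely different from the paper's. Both arguments ultimately show that for every $w'\notin\R w$ there is a positive-measure set of $f$ with $w'\notin N_f$, but they reach this in different ways.

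The paper takes a perturbation $w'$ of $w$, builds the analogous full-dimensional set $X_{w'}\subset\Sigma_{3,4}$, and shows that for generic $f\in X_w\cap X_{w'}$ the two rank~$4$ faces $\gram(f)^w$ and $\gram(f)^{w'}$ have \emph{different} images $U_{q_1}\neq U_{q_2}$. This is proved by a contradiction argument using simultaneous diagonalisation of $Q(w),Q(w'),Q(w_1)$ (via \cite{jl2016}), splitting into three cases according to the signature of $Q(w)$ and in each case forcing $w'\in\R w$.

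You instead bound the normal cone linearly: from $\pr_W(\{\psi\succeq0:\psi\in\sy{P_f}\})\subset N_f$ and the dimension count $\dim N_f=3=\dim L_{P_f}$ you get $N_f\subset L_{P_f}=\pr_W(\sy{P_f})$, and then reduce everything to the purely linear claim $\bigcap_{P\in\mathcal P}L_P=\R w$, to be certified by exhibiting one pair $(P,P')$ with $L_P\cap L_{P'}=\R w$. This is cleaner and avoids the diagonalisation case analysis; the price is that the substantive content is pushed entirely into the \texttt{Sage} check. Note that since $\mathcal P$ depends on $w$, and the statement must hold for every $w$ in the given locus, you still need one such check per $\gl_3$-orbit of $Q(w)$ (five signatures in total), so a case distinction reappears at the computational stage, just in a lighter form than the paper's.

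Two minor points. First, your ``i.e.'' equating $\dim L_P<3$ with $P$ containing a rank~$1$ conic is not quite right: linear dependence of $M_{q_1}^{\adj}$, $M_{q_2}^{\adj}$ and the mixed adjoint is strictly weaker than the existence of $(\alpha,\beta)$ with $(\alpha M_{q_1}+\beta M_{q_2})^{\adj}=0$. Only the generic statement $\dim L_P=3$ is needed, and that is clear by a dimension count. Second, your first attempts at the explicit pair should avoid block-diagonal choices of $Q(w_1')$: if both $Q(w_1)$ and $Q(w_1')$ are taken diagonal (or in a common block structure) one gets $L_P+L_{P'}$ of dimension only $4$; a genuinely generic $Q(w_1')$ does give $L_P\cap L_{P'}=\R Q(w)$, so the check goes through.
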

\begin{proof}
Let $w'$ be a perturbation of $w$ which is not a scalar multiple. As in the proof of \cref{prop:ternary_quartics_rk_2_completion} we consider the full-dimensional \sa\ subsets of $\Sigma_{3,4}$
\[
X_w=\bigcup_{q\in\oo_w} \interior\Sigma U_q^2,\quad X_{w'}=\bigcup_{q\in\oo_{w'}} \interior\Sigma U_q^2
\]
with $\oo_w,\oo_{w'}\subset\Rx_2$ with non-zero measure as in \cref{prop:ternary_quartics_rk_2_completion}. 
Since $w'$ is a perturbation of $w$, the two full-dimensional sets $X_w, X_{w'}$ intersect again in a full-dimensional \sa\ set $X$. Let $f\in X$ be generic and $f\in\Sigma U_{q_1}^2\cap \Sigma U_{q_2}^2$ with $q_1\in\oo_w,\, q_2\in\oo_{w'}$. We claim that $U_{q_1}\neq U_{q_2}$. 
After showing this, we see that for every $f\in X$ the faces in directions $w$ and $w'$ differ, hence the faces $(\Sigma_\mu K)^w$ and $(\Sigma_\mu K)^{w'}$ differ and thus the normal cone of $(\Sigma_\mu K)^w$ has dimension 1.
Assume this is false. To simplify the notation we will not distinguish between a quadratic form $q\in\Rx_2$ and its associated symmetric matrix in this proof, and also write $w,w'$ instead of $Q(w),Q(w')$.

(i): Assume we can find a decomposition $Q(w)=Q_1+Q_2$ with $Q_1$ positive definite and $\det Q_2>0$.
Then there is a full-dimensional subset $\oo_w'\subset\oo_w$ such that for every $Q_1\in\oo_w'$ there is $Q_2\in\oo_{w'}$ with
\[
\spn(Q_1^{\adj},(w-Q_1)^{\adj})=\spn(Q_2^{\adj},(w'-Q_2)^{\adj}).
\]
Since $Q_1$ is positive definite there exists $S\in\gl_3(\R)$ such that $SQ_1S^t=I$ and $SwS^t=D$ diagonal. It follows that also $SQ_2S^t$ and $Sw'S^t$ are diagonal. 
Hence, for every $Q\in\oo_w'$ the three quadratic forms $Q,w,w'$ are simultaneously diagonalizable. By \cite[Theorem 10]{jl2016} this is equivalent to $SwS^t$ and $Sw'S^t$ commuting, i.e.
\[
(SwS^t)(Sw'S^t)=(Sw'S^t)(SwS^t) \iff \underbrace{w'^{-1}w}_{=:A}=(S^tS)\underbrace{(ww'^{-1})}_{=:B}(S^tS)^{-1}.
\]
Since $\oo_w'$ is full-dimensional, we find a full-dimensional subset $\uu\subset\gl_3(\R)\cap\Sym_3^+$ such that for every $G\in\uu$ we have $A=GBG^{-1}$. Let $G\in\uu$ be an interior point of $\uu$ and write $TGT^t=I$, $T\in\gl_3(\R)$. Then we may assume that $\uu':=T\uu T^t$ is a neighbourhood of $I$ satisfying
\[
(TPT^t)\underbrace{((T^t)^{-1}BT^t)}_{=:B'}(TPT^t)^{-1}=TPBP^{-1}T^{-1}=\underbrace{TAT^{-1}}_{=:A'}
\]
for $P\in\uu$. Since $I\in\uu'$ it follows that $B'=A'$. Let $D=\diag(d_1,d_2,d_3)$ be a diagonal matrix in $\uu'$ then $DB'D^{-1}=A'=B'$. Since $\uu'$ is a neighbourhood of $I$ it follows that $B'$ is a diagonal matrix. Now, using the matrices 
\[
\begin{pmatrix}
1 & \epsilon & 0\\
\epsilon & 1 & 0\\
0 & 0 & 1
\end{pmatrix},
\begin{pmatrix}
1 & 0 & 0\\
0 & 1 & \epsilon\\
0 & \epsilon & 1
\end{pmatrix}
\]
and comparing entries shows that $B'=\lambda I$ for some $\lambda\in\R$. Thus, the same holds for $A$ and $B$. Especially, $w'=\lambda w$, a contradiction.

(ii): Assume $Q(w)$ is negative definite. Let $S\in\gl_3(\R)$ such that $-SwS^t=I$ and $SQ_1S^t$ is diagonal. Then also $Sw'S^t$ is diagonal. Hence, $w,w'$ and any $Q_1\in\oo_w'$ are simultaneously diagonalizable. By \cite[Theorem 10]{jl2016} this means that
\[
(SQ_1S^t)(Sw'S^t)=(Sw'S^t)(SQ_1S^t) \iff (Sw'S^t)=(SQ_1S^t)^{-1}(Sw'S^t)(SQ_1S^t)
\]
(note that even though $S$ depends on $Q_1$, we may choose any $S$ that makes $-w$ the identity matrix in this equation by \cite[Theorem 10]{jl2016}). However, this means that $Sw'S^t$ is stabilized by a full-dimensional subset of $\gl_3(\R)\cap\Sym_3$ and $\det (Sw'S^t)\neq 0$ which is only true for $\lambda I$, $\lambda\in\R$. Hence, $Sw'S^t=\lambda I$ and by assumption $SwS^t=I$, i.e. $w=S^{-1}(S^t)^{-1}$ and $w'=\lambda w$.

(iii): $\rk Q(w)=2$ and $Q(w)$ is negative semidefinite. If one of $w,w'$ is invertible we are in cases (i) or (ii). Hence, we may assume that both have rank 2. Let $\norm{\cdot}$ be any matrix norm with $\norm{T}=\norm{T^t}$ for all real $3\times 3$ matrices, e.g. the one induced by the trace bilinear form. We apply \cite[Lemma 5]{jl2016} which states that we can find $S\in\gl_3(\R),\, \norm{S}=1$, such that
\[
SwS^t=\begin{pmatrix}
A & 0\\
0 & 0
\end{pmatrix},\quad 
Sw'S^t=\begin{pmatrix}
B & 0\\
0 & s
\end{pmatrix}
\text{ or }
Sw'S^t=\begin{pmatrix}
B & s\\
s & 0
\end{pmatrix}
\]
where $A$ is a diagonal matrix of rank 2, $B$ is a $2\times 2$ symmetric matrix and $s\in\R$. We have
\[
\norm{SwS^t-Sw'S^t}=\norm{S(w-w')S^t}\le \norm{w-w'}\norm{S}\norm{S^t}=\norm{w-w'}.
\]
Hence $B$ is a perturbation of $A$ and thus has rank 2. Since $w'$ has rank 2 we see that $s=0$. But now both matrices $SwS^t$ and $Sw'S^t$ are block matrices only with an upper left $2\times 2$ block not being zero. Hence, these two can be simultaneously diagonalized as they are basically $2\times 2$ matrices. Since $w$ is negative semidefinite we may now assume that $S\in\gl_3(\R)$ satisfies
\[
SwS^t=\begin{pmatrix}
-1 & 0 & 0\\
0 & -1 & 0\\
0 & 0 & 0
\end{pmatrix}=:A',\quad
Sw'S^t=\begin{pmatrix}
a & 0 & 0\\
0 & b & 0\\
0 & 0 & 0
\end{pmatrix}=:B'.
\]
Therefore, for every $Q_1\in\oo_w'$ there exists $Q_2\in\oo_{w'}$ such that
\[
\spn(Q_1^{\adj},(A'-Q_1)^{\adj})=\spn(Q_2^{\adj},(B'-Q_2)^{\adj})
\]
where $B'$ is also negative semidefinite as it is a perturbation of $A'$ of rank 2. We write
\[
Q_1=\begin{pmatrix}
c_{1} & c_{4} & c_{5} \\
c_{4} & c_{2} & c_{6} \\
c_{5} & c_{6} & c_{3}
\end{pmatrix},\quad 
Q_2=\begin{pmatrix}
a_{1} & a_{4} & a_{5} \\
a_{4} & a_{2} & a_{6} \\
a_{5} & a_{6} & a_{3}
\end{pmatrix}
\]
and calculate the difference of the two generators of both spaces. This gives
\[
Q_1^{\adj}-(A'-Q_1)^{\adj}=\scalemath{0.8}{\begin{pmatrix}
\minus c_{3} & 0 & c_{5} \\
0 & \minus c_{3} & c_{6} \\
c_{5} & c_{6} & \minus c_{1} \minus  c_{2} \minus  1
\end{pmatrix}},\,\,
Q_2^{\adj}-(B'-Q_2)^{\adj}=\scalemath{0.8}{\begin{pmatrix}
b a_{3} & 0 & \minus b a_{5} \\
0 & a a_{3} & \minus a a_{6} \\
\minus b a_{5} & \minus a a_{6} & \minus a b + b a_{1} + a a_{2}
\end{pmatrix}}.
\]
If $Q_1$ is chosen generically then the $(1,2)$-entry of $Q_1^{\adj}$ and $c_3$ are both non-zero. The first assumption implies the equality 
\[
Q_1^{\adj}-(A'-Q_1)^{\adj}=\lambda (Q_2^{\adj}-(B'-Q_2)^{\adj})
\]
for some $\lambda\in\R$. From this we get $0\neq -c_3=\lambda ba_3=\lambda aa_3$ and thus $a=b$ which shows that $B'$ is a positive scalar multiple of $A$.
\end{proof}

This finishes the proof of \cref{thm:ternary_quartics_fb}.

\begin{rem}
We do not know if all extreme points on the boundary of the 3-dimensional faces are exposed faces. As in the case of binary sextics no other faces can be non-exposed since all other boundary points are extreme points.
\end{rem}

\begin{rem}
We discuss \cite[Table 2]{psv2011} which shows some statistics when optimizing linear functionals over \gsa\ of ternary quartics and also contains the algebraic degree depending on the rank of the optimal solution.

Firstly, the probabilities found there compare well to our findings about the normal cones. If $w\in W$ is chosen randomly, then half the time $\det Q(w)<0$. If $\det Q(w)>0$ the face in direction $w$ is a rank 5 extreme point only for a full-dimensional \sa\ subset of $\Sigma_{3,4}$. Therefore, the probability of finding a rank 5 point has to be rather low. Moreover, minimizing $w\in W$ with $\det Q(w)<0$ over the \gs\ of any smooth psd ternary quartic yields a rank 3 or rank 4 point.

Secondly, we explain the algebraic degree of the optimal solution for rank 5 extreme points which was found to be 1.
\end{rem}

\begin{prop}
\label{prop:algebraic_degree}
Let $f\in\Sigma_{3,4}$ rational, smooth and $0\neq w\in W$ rational such that the face of $\gram(f)$ in direction $w$ is a rank 5 extreme point $\theta$. Then $\theta$ can be defined over $\Q$ and the sos representation of $f$ corresponding to $\theta$ as well.
\end{prop}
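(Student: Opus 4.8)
The plan is to reconstruct the image subspace $U:=\im(\theta)$ over $\Q$ directly from the rational direction $w$, then to recognize $\theta$ as the unique solution of a linear system defined over $\Q$; the rationality of the associated sum of squares representation will follow from a diagonalization over $\Q$ together with the four-square theorem.

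First I would invoke \cref{rem:ternary_quartics_summary}. Since $\theta$ is a rank $5$ extreme point, $U$ has dimension $5$ and $U^{\perp}=\spn(q)$ for a rank $3$ quadratic form $q$; writing $Q$ for the symmetric matrix of $q$ and $A=Q^{\adj}$, the (one-dimensional) normal cone of $\theta$ is spanned by $\frac{4}{3}A_{33}R_1+\frac{4}{3}A_{22}R_2+\frac{4}{3}A_{11}R_3-A_{23}R_4-A_{13}R_5-A_{12}R_6$. Comparing with the definition of $Q(w)$, this says exactly that $Q(w)$ is a nonzero scalar multiple of $A=Q^{\adj}$. Taking adjoints and using $(Q^{\adj})^{\adj}=\det(Q)\,Q$ for $3\times 3$ matrices, $Q(w)^{\adj}$ is a nonzero scalar multiple of $Q$, so
\[
U^{\perp}=\spn\bigl(Q(w)^{\adj}\bigr),
\]
where $Q(w)^{\adj}$ is read as a quadratic form in $\Rx_2$ (and $\det Q(w)>0$, consistently with $\rk(q)=3$). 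As $w$ is rational, $Q(w)^{\adj}$ is rational, hence $U^{\perp}$, and therefore $U$, is a subspace of $\Rx_2$ defined over $\Q$. I then fix a rational basis $q_1,\dots,q_5$ of $U$.

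Next I would locate $\theta$ inside $\sy U$. The restriction of $\mu$ to $\sy U$ has image $U^2$, and since $\theta$ is a rank $5$ extreme point \cref{prop:ternary_quartics_rk_5_faces} gives $U^2=\Rx_4$, so
\[
\dim\bigl(\sy U\cap\mu^{-1}(f)\bigr)=\dim\sy U-\dim U^2=15-15=0.
\]
This affine subspace contains $\theta$, hence equals $\{\theta\}$; but it is also cut out over $\Q$, since $\sy U$ is a rational subspace of $\sy\Rx_2$ (because $U$ is rational) and $\mu^{-1}(f)$ is a rational affine subspace (because $\mu$ and $f$ are defined over $\Q$). A consistent rational linear system with a unique solution has its solution in $\Q$, so $\theta$ has rational coordinates.

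Finally, for the sum of squares representation attached to $\theta$: let $G$ be the Gram matrix of $\theta$ in the basis $q_1,\dots,q_5$, a rational symmetric matrix that is positive definite since $\rk\theta=\dim U=5$. Diagonalizing the quadratic form $G$ over $\Q$ produces rational forms $p_1,\dots,p_5$ spanning $U$ and $c_1,\dots,c_5\in\Q_{>0}$ with $\theta=\sum_{i=1}^5 c_i\,p_i\otimes p_i$, hence $f=\sum_{i=1}^5 c_i p_i^2$. By Lagrange's four-square theorem each $c_i$ is a sum of four rational squares, so after substituting we obtain $f=\sum_{i=1}^5\sum_{j=1}^4 (r_{ij}p_i)^2$ with all $r_{ij}p_i\in\Q[x,y,z]_2$, whose Gram tensor is still $\sum_i\bigl(\sum_j r_{ij}^2\bigr)\,p_i\otimes p_i=\theta$. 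None of these steps presents a real difficulty once the cited structural results are available; the only place where something beyond linear algebra over $\Q$ is genuinely needed is this last one, since one cannot in general build the summands from square roots of the diagonal entries of $G$ — those would be irrational — which is precisely why the four-square theorem enters.
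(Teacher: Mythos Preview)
Your proof is correct and shares the paper's key step: both recover $U=\im(\theta)$ over $\Q$ by observing that $U^{\perp}$ is spanned by the quadratic form associated to $Q(w)^{\adj}$ (the paper phrases this via \cref{prop:positive_det_nc}, you via \cref{rem:ternary_quartics_summary}, but it is the same computation). The only divergence is in the endgame. The paper, having $U$ rational, invokes \cite[Lemma~4.4]{cs2020} as a black box to obtain a rational sos representation with summands in $U$, and reads off the rationality of $\theta$ from that. You reverse the order: first pin down $\theta$ as the unique point of the rational affine space $\sy U\cap\mu^{-1}(f)$ (using $\dim U^2=15$), and then manufacture a rational sos representation by diagonalizing the rational Gram matrix and splitting the positive rational diagonal entries via Lagrange's four-square theorem. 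Your route is self-contained and avoids the external reference; the paper's is shorter on the page but hides exactly the work you spell out.
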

\begin{proof}
Since $w$ spans the normal cone at $\theta$ we know $\det Q(w)>0$. We may therefore find $q\in\Rx_2$ such that $\pr_W(q\otimes q)=\lambda w$ for some $\lambda\in \R_{>0}$. Then $f\in\interior\Sigma (\spn(q)^\perp)^2$ and the image of $\theta$ is given by $\spn(q)^\perp$. We see from the proof of \cref{prop:positive_det_nc} that $q$ is the adjoint of $Q(w)$ and therefore rational if $Q(w)$ is. Thus $\spn(q)^\perp$ has a $\Q$-basis. The Gram tensor $\theta$ gives rise to a sos representation $f=\sum_{i=1}^5 f_i^2$ with $f_1,\dots,f_5$ a basis of $\spn(q)^\perp$. It now follows from \cite[Lemma 4.4]{cs2020} that this representation of $f$ is defined over $\Q$. Thus $\theta=\sum_{i=1}^5 f_i\otimes f_i$ is also rational.
\end{proof}

\section{Questions}

We finish by formulating several questions arising from our results.

\begin{rem}
In general the fiber body is not a \sa\ set. However, after seeing that in the case of binary sextics the inequalities describing the full-dimensional normal cone are particularly easy (\cref{thm:binary_sextics_3_dim_nc}) and especially showing it is a \sa\ set, one may ask if at least in this special case the fiber body itself is \sa\ as well.

The same could of course also be asked about the fiber body in the case of ternary quartics or about the 3-dimensional faces in that case. However, this seems more unlikely as for example the 3-dimensional faces already make a jump in dimension as they come from 2-dimensional faces on \gsa.
\end{rem}

\begin{rem}
We may also continue studying the fiber body of \gsa\ of binary forms of higher degree and ask if there is still an extreme point with a full-dimensional normal cone. It is not hard to see that the higher dimensional analogous directions to $R_1,R_3$ also have a special interaction with the rank 2 Gram tensor corresponding to the sos representation where all zeros of the binary form are grouped by the sign of the imaginary part.

The same interpretation as in the case of binary sextics would then apply and every direction/linear functional contained in this normal cone minimizes on a rank 2 Gram tensor for almost all binary forms of fixed degree. 

Note that in higher degrees the algebraic boundary of \gsa\ is a hypersurface that is ruled by positive dimensional subspaces. Thus a general point on the boundary of the fiber body will also not be an extreme point any more.
\end{rem}

\begin{rem}
Are there other interesting families of spectrahedra where the fiber body can be understood well? Is it \sa\ in that case? This might be particularly easy if the completion approach described in \cref{rem:completion} which we constantly use behaves well.

What about families of spectrahedra whose algebraic boundary is a quartic symmetroid for example (see \cite{orsv2015})? In this case, do fiber bodies of 'nice' families still have distinguished extreme points on their boundary? Fiber bodies of 'general' families might well lose all special points, i.e. their boundary consists only of extreme points with 1-dimensional normal cones. Are they \sa?
\end{rem}

\begin{rem}
Another question about spectrahedra in general might be as follows: Assume we are given the facial structure of the fiber body for a linear map. What can we conclude about generic fibers? Of course this will be very hard in general. If however a general point on the boundary is an extreme point and positive dimensional faces either have low dimension or are close to having codimension 1, one might be able to understand the fibers from the fiber body.
\end{rem}

{\bf Acknowledgements}. 
I would like to thank Bernd Sturmfels for suggesting this problem during his visit at Universit\"at Konstanz. Most grateful I am to Chiara Meroni for numerous discussions on the topic and answering countless questions about fiber bodies.

\bibliographystyle{plain}

\begin{thebibliography}{10}

\bibitem{beks2017}
J.~Bezanson, A.~Edelman, S.~Karpinski, and V.~B. Shah.
\newblock Julia: a fresh approach to numerical computing.
\newblock {\em SIAM Rev.}, 59(1):65--98, 2017.

\bibitem{bs1992}
L.~J. Billera and B.~Sturmfels.
\newblock Fiber polytopes.
\newblock {\em Ann. Math. (2)}, 135(3):527--549, 1992.

\bibitem{cs2020}
J.~Capco and C.~Scheiderer.
\newblock Two remarks on sums of squares with rational coefficients.
\newblock In {\em Algebra, logic and number theory. Proceedings of the 5th
  joint conferences, B{e}dlewo, Poland, June 24--29, 2018}, pages 25--36.
  Warsaw: Polish Academy of Sciences, Institute of Mathematics, 2020.

\bibitem{ci2015}
M.~{Chan} and N.~{Ilten}.
\newblock {Fano schemes of determinants and permanents}.
\newblock {\em {Algebra Number Theory}}, 9(3):629--679, 2015.

\bibitem{clr1995}
M.~D. {Choi}, T.~Y. {Lam}, and B.~{Reznick}.
\newblock {Sums of squares of real polynomials}.
\newblock In {\em {\(K\)-theory and algebraic geometry: connections with
  quadratic forms and division algebras. Summer Research Institute on quadratic
  forms and division algebras, July 6-24, 1992, University of California, Santa
  Barbara, CA (USA)}}, pages 103--126. Providence, RI: American Mathematical
  Society, 1995.

\bibitem{cpsv2017}
L.~Chua, D.~Plaumann, R.~Sinn, and C.~Vinzant.
\newblock {Gram spectrahedra}.
\newblock In {\em {Ordered algebraic structures and related topics.
  International conference at CIRM, Luminy, France, October 12--16, 2015.
  Proceedings}}, pages 81--105. Providence, RI: American Mathematical Society
  (AMS), 2017.

\bibitem{st2015}
M.~K. {De Carli Silva} and L.~{Tun\c{c}el}.
\newblock {Vertices of spectrahedra arising from the elliptope, the theta body,
  and their relatives.}
\newblock {\em {SIAM J. Optim.}}, 25(1):295--316, 2015.

\bibitem{jl2016}
R.~Jiang and D.~Li.
\newblock Simultaneous diagonalization of matrices and its applications in
  quadratically constrained quadratic programming.
\newblock {\em SIAM J. Optim.}, 26(3):1649--1668, 2016.

\bibitem{MM21}
L.~Mathis and C.~Meroni.
\newblock Fiber convex bodies.
\newblock {\em {Discete Comput Geom}}, 2022.

\bibitem{orsv2015}
J.~C. Ottem, K.~Ranestad, B.~Sturmfels, and C.~Vinzant.
\newblock Quartic spectrahedra.
\newblock {\em Math. Program.}, 151(2 (B)):585--612, 2015.

\bibitem{psv2011}
D.~Plaumann, B.~Sturmfels, and C.~Vinzant.
\newblock {Quartic curves and their bitangents}.
\newblock {\em {J. Symb. Comput.}}, 46(6):712--733, 2011.

\bibitem{prss2004}
V.~{Powers}, B.~{Reznick}, C.~{Scheiderer}, and F.~{Sottile}.
\newblock {A new approach to Hilbert's theorem on ternary quartics}.
\newblock {\em {C. R., Math., Acad. Sci. Paris}}, 339(9):617--620, 2004.

\bibitem{Scheiderer18}
C.~Scheiderer.
\newblock Extreme points of gram spectrahedra of binary forms.
\newblock {\em Discrete Comput Geom}, 67:1174--1190, 2022.

\bibitem{sagemath}
{The Sage Developers}.
\newblock {\em {S}ageMath, the {S}age {M}athematics {S}oftware {S}ystem
  ({V}ersion 9.4.)}, 2021.
\newblock {\tt https://www.sagemath.org}.

\bibitem{Vill21}
J.~Vill.
\newblock Gram spectrahedra of ternary quartics.
\newblock {\em Journal of Symbolic Computation}, 116:263--283, 2023.

\bibitem{z1995}
G.~M. Ziegler.
\newblock {\em Lectures on polytopes}, volume 152 of {\em Grad. Texts Math.}
\newblock Berlin: Springer-Verlag, 1995.

\end{thebibliography}

\end{document}